\documentclass[11pt]{amsart}
\usepackage{amsmath,latexsym,amsfonts,amssymb}
\usepackage{mathrsfs}
\usepackage{fullpage}
\usepackage{enumerate}
\textwidth 430pt \textheight 600pt
\textheight=8.9in \textwidth=6.2in \oddsidemargin=0.25cm
\evensidemargin=0.25cm \topmargin=-.5cm

\usepackage{amssymb, amsmath, amsfonts}
\usepackage{mathrsfs}
\usepackage{epsfig,amsbsy,amsthm} 
\usepackage{float,epsfig}
\usepackage{fixmath}
\usepackage{graphics}
\usepackage{pgfpages}
\usepackage{caption}
\usepackage{subcaption}
\usepackage{appendix}
\numberwithin{equation}{section}  
\usepackage{hyperref} 
\usepackage{graphicx}
\usepackage{pst-all}
\usepackage{calligra}
\usepackage{color}
\usepackage{tikz}
\DeclareMathAlphabet{\mathpzc}{OT1}{pzc}{m}{it}
\DeclareMathAlphabet{\mathcalligra}{T1}{calligra}{m}{n}
\begin{document}
\newtheorem{theorem}{\bf Theorem}[section]
\newtheorem{proposition}[theorem]{\bf Proposition}
\newtheorem{definition}{\bf Definition}[section]
\newtheorem{corollary}[theorem]{\bf Corollary}
\newtheorem{exam}[theorem]{\bf Example}
\newtheorem{remark}[theorem]{\bf Remark}
\newtheorem{lemma}[theorem]{\bf Lemma}
\newtheorem{assum}[theorem]{\bf Assumption}

\newcommand{\von}{\vskip 1ex}
\newcommand{\vone}{\vskip 2ex}
\newcommand{\vtwo}{\vskip 4ex}
\newcommand{\ds}{\displaystyle}
\def \noin{\noindent}
\newcommand{\be}{\begin{equation}}
\newcommand{\ee}{\end{equation}}
\newcommand{\beno}{\begin{equation*}}
\newcommand{\eeno}{\end{equation*}}
\newcommand{\ba}{\begin{align}}
\newcommand{\ea}{\end{align}}
\newcommand{\bano}{\begin{align*}}
\newcommand{\eano}{\end{align*}}
\newcommand{\bea}{\begin{eqnarray}}
\newcommand{\eea}{\end{eqnarray}}
\newcommand{\beano}{\begin{eqnarray*}}
\newcommand{\eeano}{\end{eqnarray*}}
\def \noin{\noindent}
 \def \tcK{{\tilde {\mathcal K}}}    
\def \O{{\Omega}}
\def \cT{{\mathcal T}}
\def \cV{{\mathcal V}}
\def \cE{{\mathcal E}}
\def \R{{\mathbb R}}
\def \V{{\mathbb V}}
\def \S{{\mathbb S}}
\def \N{{\mathbb N}}
\def \Z{{\mathbb Z}}
\def \Mc{{\mathcal M}}
\def \Cc{{\mathcal C}}
\def \Rc{{\mathcal R}}
\def \Ec{{\mathcal E}}
\def \Gc{{\mathcal G}}
\def \Tc{{\mathcal T}}
\def \Qc{{\mathcal Q}}
\def \Ic{{\mathcal I}}
\def \Pc{{\mathcal P}}
\def \Oc{{\mathcal O}}
\def \Uc{{\mathcal U}}
\def \Yc{{\mathcal Y}}
\def \Ac{{\mathcal A}}
\def \Bc{{\mathcal B}}
\def \k{\mathpzc{k}}
\def \Rp{\mathpzc{R}}
\def \Os{\mathscr{O}}
\def \Js{\mathscr{J}}
\def \Es{\mathscr{E}}
\def \Qs{\mathscr{Q}}
\def \Ss{\mathscr{S}}
\def \Cs{\mathscr{C}}
\def \Ds{\mathscr{D}}
\def \Ms{\mathscr{M}}
\def \Ts{\mathscr{T}}
\def \LL{L^{\infty}(L^{2}(\Omega))}
\def \LH{L^{2}(0,T;H^{1}(\Omega))}
\def \B {\mathrm{BDF}}
\def \el {\mathrm{el}}
\def \re {\mathrm{re}}
\def \e {\mathrm{e}}
\def \div {\mathrm{div}}
\def \CN {\mathrm{CN}}
\def \Rs   {\mathbf{R}_{{\mathrm es}}}
\def \Rb {\mathbf{R}}
\def \Jb {\mathbf{J}}
\def  \apos {\emph{a posteriori~}}

\def\mean#1{\left\{\hskip -5pt\left\{#1\right\}\hskip -5pt\right\}}
\def\jump#1{\left[\hskip -3.5pt\left[#1\right]\hskip -3.5pt\right]}
\def\smean#1{\{\hskip -3pt\{#1\}\hskip -3pt\}}
\def\sjump#1{[\hskip -1.5pt[#1]\hskip -1.5pt]}
\def\jumptwo{\jump{\frac{\p^2 u_h}{\p n^2}}}

\title[Adaptive FEM for optimal control problem with integral state constraints]
{Adaptive finite element method for an elliptic optimal control problem with integral state constraints}

\author{ Kamana Porwal}
\thanks{The first author's work is supported by CSIR Extramural Research Grant (Grant No. 25(0297)/19/EMR-II)}
\address{Department of Mathematics, Indian Institute of Technology Delhi,
New Delhi-110016}
\email{kamana@maths.iitd.ac.in}

\author{ Pratibha Shakya}
\address{Department of Mathematics, Indian Institute of Technology Delhi,
New Delhi-110016}
\email{shakya.pratibha10@gmail.com}

%
\date{}
\maketitle

\begin{abstract}
\par \noindent
In this article, we develop a posteriori error analysis of a nonconforming finite element method for a linear quadratic elliptic distributed optimal control problem with two different set of constraints, namely (i)  integral state constraint and integral control constraint (ii)  integral state constraint and pointwise control constraints. In the analysis, we have taken the approach of reducing the state-control constrained minimization problem into a state minimization problem obtained by eliminating the control variable. The reliability and efficiency of a posteriori error estimator are discussed. Numerical results are reported to illustrate the behavior of the error estimator.

\end{abstract}
\textbf{Key words.}
Elliptic optimal control problem, Fourth order variational inequality, Integral state constraints, Adaptive finite element method
\section{Introduction} \label{sec:intro}
Optimal control problems (OCPs) play an important role in various applications in physics, mechanics and other engineering sciences. For the theoretical  and numerical development of the OCPs, we refer to \cite{fredi10,Lions:1971:OCP,liuyan2008,HPU:2009:Appls}. The finite element method is a popular and widely used numerical method to approximate OCPs. The finite element approximation of the elliptic optimal control problems started with articles of Falk \cite{Falk:1973:OCP} and Geveci \cite{Geveci:1979:OCP}. In these papers, piecewise constant approximation of the control is considered and optimal order error estimates are obtained for the optimal variables.
The authors of \cite{arnautu} have established the optimality conditions and introduce the Ritz-Galerkin discretization for elliptic optimal control problem and obtained error estimates for the control and state variables. The authors of \cite{hinze:2005} have introduced the variational discretization method, therein the error estimates are obtained by exploiting the relationship between the state and adjoint state. The numerical approximation of the elliptic optimal control problems with control variable from measure spaces can be found in \cite{casas:2012:sparse,clason:2011}. \\
\par \noindent
There have been abundant research on the adaptive finite method for OCPs governed by differential equations in last few decades. The use of adaptive techniques based on a posteriori error estimation is well accepted in the context of finite element discretization of partial differential equations \cite{ainsworth:2000,verfurth:1995}. In this direction, the pioneer  work has been made by Liu and Yan \cite{liuyan2001} for residual based a posteriori error estimates,  and Becker et al. \cite{becker:2000} for dual-weighted goal oriented adaptivity for optimal control problems. In \cite{kohls:2015} the authors have proved that the sequence of adaptively generated discrete solutions converge to the true solutions of OCPs. Recently, Gong and Yan \cite{gong:2017} have presented a rigorous proof for convergence and quasi-optimality of adaptive finite element method for an OCP with pointwise control constraints by means of variational discretization technique. In \cite{wolfmayr:2016}, Wolfmayr  has derived functional type  a posteriori error estimates for elliptic optimal control problems with control constraints. The authors of \cite{pshakya:2019} have studied the finite element approximation of OCPs governed by elliptic equations with measure data, therein they have 
derived both a priori  and a posteriori error bounds for the state and control variables. We refer to the reference section for other notable works on the adaptive finite element methods for OCPs with control constraints.

\par \noindent

In recent years,  numerical analysis of OCPs with state constraints has been an active area of research. The articles \cite{Casas:1993:StateConst,casas:finite,casas:M,casas:14,DH:2007:StateCtrl,MRT:2005:Regular,Meyer:2008:StateConst} are devoted to the control problems with pointwise  state constraints. These articles are concentrated on the existence, uniqueness, regularity results of the optimal variables and also analyze asymptotic convergence of the errors in optimal variables.   The authors of \cite{RW:2012:StateCtrl_Apost} have considered the elliptic optimal control problem with state and control constraints, and derived reliable a posteriori error estimator. In \cite{hoppe:2010},  authors have used mixed-control state constraints as a relaxation of originally state constrained to avoid the intrinsic difficulties arising from measure-valued Lagrange multipliers in the case of pure state constraints OCP and derived residual type a posteriori error estimates.

  \par \noindent
A priori analysis of OCPs with integral state constraint is discussed in \cite{LYYG:2010:IntState,ZY:2015:LGS,lixin17, PS:2021:APNUM}. The authors of \cite{yuan2009} have derived a posteriori error estimates for a state-constrained OCP with integral state constraint. Recently in  \cite{chenz2019}, Chen {\it et al.} considered hp spectral element method  for integral state constrained elliptic optimal control problem and derived a posteriori error estimates for the coupled state and control approximation. The authors of \cite{ZY:2015:LGS} have considered Galerkin spectral approximation for an OCPs with state integral constraint in one dimension and derived  a priori and a posteriori error estimates. 
\par \noindent
In this article, we use a different approach to analyze adaptive finite element method for OCPs with integral state constraints. This approach avoids the use of the first order optimality conditions and therein the state and control constrained OCP can be reformulated into purely state constrained optimization problem. 
The optimal state is then obtained by solving a fourth order variational inequality. The main intent of this article is to derive a reliable and efficient a posteriori error estimator of a non-conforming finite element method for the elliptic distributed optimal control problem with two different set of constraints, namely (i) with integral state constraint and integral control constraint (ii) with integral state constraint and pointwise control constraints. 
We refer to \cite{BSZ:2013:C0IP,brenner:sung:2017,LYG:2009:StateConst,BGKS:2018:SC}, for the numerical analysis of constrained OCP based on the approach of reduction to purely state constrained optimization problem.
Recently in \cite{brenner:et:al:2019}, the authors have used $C^0$ interior penalty method for an elliptic state-constrained optimal control problem with Neumann boundary conditions and derived a priori and a posteriori error estimates.\\

Let $\Omega \subset\mathbb{R}^2$ be a convex polygonal domain with smooth boundary $\partial\Omega$. For any $1\leq p \leq \infty $ and $D \subset \O$, we denote $L^{p}(D)$ norm by $\|\cdot \|_{L^{p}(D)}$. We adopt the standard notations  $W^{m,p}(\Omega)$ and $W_{0}^{m,p}(\Omega)$ for Sobolev spaces for $p\in[1,\infty]$ and $m\geq 0$ equipped with norm $\|\cdot\|_{W^{m,p}(\Omega)}$ and seminorm $|\cdot|_{W^{m,p}(\Omega)}$. When $ p=2$, we denote $W^{m,p}(\Omega)$ by $H^m(\Omega)$ and $W_0^{m,p}(\Omega)$ by $H^m_0(\Omega)$ and corresponding norm and seminorm are denoted by $\|\cdot\|_{H^{m}(\Omega)}$ and  $|\cdot|_{H^{m}(\Omega)}$, respectively.  
We consider the following state and control constrained optimal control problem: to find $(\tilde{u},\tilde{y}) \in L^2(\Omega) \times H^1_0(\Omega)$ such that
\begin{eqnarray}
J(\tilde{u},\tilde{y})= \min_{(u,y) \in L^2(\Omega) \times H^1_0(\Omega)} \,J(u,y)\label{intro:functional}
\end{eqnarray}
 subject to  
\begin{eqnarray}
\begin{cases}
\int_{\Omega}\nabla y\cdot\nabla w\,dx=\int_{\Omega}uw\,dx\;\;\forall w\in H^1_0(\Omega)\;\;\\
\int_{\Omega} u \,dx\geq \delta_1,\;\;\;\;\;\\ 
\int_{\Omega}y\,dx \geq \delta_2, 
\end{cases}\label{intro:state:cons}
\end{eqnarray}
where $J(u,y)= \|y-y_d\|_{L^2(\Omega)}^2 +\frac{\beta}{2}\|u\|_{L^2(\Omega)}^2 $ with $y_d$ as the given desired state, $\delta_1, \delta_2 \in \mathbb{R}$ and $\beta>0$ is a given constant.  

 \par \noindent
 The rest of the article is organized as follows. In Section \ref{sec:2}, we obtain the characterization of the solution of the optimization problem \eqref{intro:functional}-\eqref{intro:state:cons} by the solution of a fourth order variational inequality  and discuss the optimality conditions of the underlying OCP with integral state as well as integral control constraints.  In Section \ref{sec:3}, we introduce notations and preliminary results required in the subsequent sections. Therein, we also discuss the finite element discretization of the continuous problem by a bubble enriched Morley finite element method and present the optimality conditions associated to the discrete problem. A posteriori error estimator of the underlying finite element method in introduced in Section \ref{sec:Apos}, followed by that reliability and efficiency estimates are established. In Section \ref{sec:OCPs}, we discuss a posteriori error bounds for an OCP with integral state and pointwise control constraints using the proposed finite element method. Finally, in Section \ref{sec:NumTests}, we present numerical results to illustrate the performance of derived a posteriori error estimators.
 
 \section{Continuous Variational Inequality and Optimality Conditions}\label{sec:2}
 This section is devoted to characterize the solution of (\ref{intro:functional})-(\ref{intro:state:cons}) by the solution of a variational inequality and discuss the associated optimality conditions.
\par \noindent
 For $u\in L^2(\Omega)$, Lax-Milgram lemma \cite{Ciarlet:1978:FEM} ensures the existence of a unique solution $y\in H^1_0(\Omega)$ satisfying the variational formulation 
    \begin{eqnarray}
  \int_{\Omega}\nabla y\cdot\nabla w\,dx=\int_{\Omega} u w\,dx ~~\qquad \forall w \in H^1_0(\Omega).
  \end{eqnarray}
 Moreover, from elliptic regularity  theory (cf. \cite{adams,grisvard}) we obtain  $y\in H^2(\Omega)$. Set $W=H^2(\Omega)\cap H_0^1(\Omega)$. Using $u=-\Delta y$, we can rewrite the optimization problem (\ref{intro:functional})-(\ref{intro:state:cons}) as follows: to find $ {y}^{*} \in \mathcal{K}$ such that
 \begin{align}\label{eq:MP}
  {y}^{*}&=\displaystyle argmin_{y\in \mathcal{K}}\Big[ \frac{1}{2}\int_{\Omega}(y-y_d)^2\,dx+\frac{\beta}{2}\int_{\Omega}(-\Delta y)^2\,dx\Big]
 \end{align} 
 where $\mathcal{K}$ is defined by
 \begin{eqnarray}
\mathcal{K} =\{w\in W:\,\int_{\Omega}w\,dx\geq \delta_2 \;\;\text{and}\;\; \int_{\Omega}(-\Delta w)~dx\geq\delta_1\;\}.\label{intro:k}  
 \end{eqnarray}
 The minimizer of \eqref{eq:MP} can further be characterized by the minimizer of the following optimization problem: find $ {y}^{*} \in \mathcal{K}$ such that
 \begin{align}\label{eq:MP1}
 {y}^{*}&=\displaystyle argmin_{y\in \mathcal{K}}\Big[\frac{1}{2}\mathcal{A}(y,y)-(y_d,y)\Big]
 \end{align} 
 where
 \begin{eqnarray}
 \mathcal{A}(v,w)=\beta\int_{\Omega}D^2v:D^2w\,dx+\int_{\Omega}vw\,dx,\;\;v,w\in W,\label{intro:state:modi}
 \end{eqnarray}
 with $ D^2v:D^2w=\displaystyle\sum_{i,j=1}^2\Big(\frac{\partial^2v}{\partial x_i\partial x_j}\Big)\Big(\frac{\partial ^2w}{\partial x_i\partial x_j}\Big)$ where $D^2$ denotes the Hessian matrix.
 \par \noindent
 We assume the following Slater condition holds \cite{fredi10}: there exists $y\in W$ satisfying $\int_{\Omega}y\,dx>\delta_2$ and $\int_{\Omega}(-\Delta y)\,dx\geq \delta_1$. This ensures that the set $\mathcal{K}$ is nonempty, together with closed and convex. Since the bilinear form $\mathcal{A}(\cdot,\cdot)$ is bounded, coercive and symmetric on $W$, by the standard theory (cf. \cite{Glowinsiki:1984:book,kinderlehrer}) there exists a unique solution $y^*\in \mathcal{K}$ of (\ref{eq:MP1}) satisfying 
 the following variational inequality
\begin{equation}
   \mathcal{A}({y}^*,w-{y}^*) \geq \int_{\Omega}y_d(w-{y}^*)\,dx\;\;\;\;\forall w\in \mathcal{K}.\label{variational:ineq}
  \end{equation}
 \par \noindent
Using the Lagrange multiplier approach, we obtain the following Karush-Kuhn-Tucker conditions  (cf. \cite{ito,luenberger}) together with complementarity conditions \eqref{lambda}-\eqref{mu_N}: there exist $\lambda\in \mathbb{R}$ and ${\mu}\in\mathbb{R}$  such that 
 \begin{eqnarray}
 \mathcal{A}(y^*,w)= \int_{\Omega}y_d w\,dx -\int_{\Omega}\lambda (\Delta w)\,dx+\int_{\Omega} \mu w\,dx\;\;\;\;\forall w\in W,\label{kkt}
 \end{eqnarray} 
 with 
 \begin{eqnarray}
\lambda \geq 0,\;\;&&\;\;\text{if}~~\;\int_{\Omega}(-\Delta {y}^*)\,dx=\delta_1,\label{lambda}\\
\lambda =0,\;\;&&\;\;\text{if}~~\;\int_{\Omega}(-\Delta {y}^{*})\,dx>\delta_1,\\
\mu \geq 0,\;\;&&\;\;\text{if}~~\;\int_{\Omega} {y}^{*}\,dx=\delta_2,\label{mu}\\
\mu=0,\;\;&&\;\;\text{if}~~\;\int_{\Omega} {y}^{*}\,dx>\delta_2\label{mu_N}.
\end{eqnarray} 
Note that, the adjoint state $p\in H^1_0(\Omega)$  satisfy
\begin{eqnarray}
\int_{\Omega}\nabla p\cdot \nabla w\,dx=\int_{\Omega}(y^*-y_d)\,w\,dx-\int_{\Omega}\mu w\,dx\;\;\forall w\in H^1_0(\Omega).\label{adjont:ct}
\end{eqnarray}
\noindent

\section{Notations and Finite Element Discretization}\label{sec:3}
In this section, we introduce the discrete control problem and present some useful tools required for subsequent analysis.
Let $\mathcal{T}_h$ be a regular triangulation of the domain $\Omega$. The following notations will be used throughout this article.
\begin{align*}
&\mathcal{T}_e: \text{set of  elements in}\;\mathcal{T}_h\;\text{that share the common edge} \; e,\\ 
&h_T:\, \text{the diameter of the triangle}\;\; T,\;\;\;\;
h=\max_{T\in\mathcal{T}_h} h_T\\
&\mathcal{V}_h: \; \text{set of all vertices of} \,\mathcal{T}_h,\\ 
&\mathcal{V}_T: \,\text{set of three vertices of} ~\,T,\\
&\mathcal{E}_h=\mathcal{E}_h^i\cup \mathcal{E}_h^b:\;\text{the set of the edges of the triangle in}\;\mathcal{T}_h, \text{where}\, \mathcal{E}_h^i(\text{resp.,}\, \mathcal{E}_h^b) \;\;\text{is the subset of}\, \mathcal{E}_h\, \\ &\;\;\;\;\;\;\;\;\;\text{consisting of edges interior to} \,\Omega\,(\text{resp., along}\, \partial\Omega),\\
&h_e:\,\text{length of an edge}\, e\in\mathcal{E}_h\\
&\Delta_h:\,\text{piesewise (element-wise) Laplacian operator}\\
&\mathbb{P}_k(T):~ \text{space of polynomials defined on T of degree less
than or equal to} ~k, ~k \geq 0~ \text{integer},\\
&X \lesssim Y: \text{there exists a positive constant} ~C~ \text{(independent of mesh parameter) such that} ~X \leq C Y,\\
&X \approx Y:  \text{there exists positive constants} ~C_1~\text{and}~ C_2 ~\text{such that}~C_1 Y \leq X \leq C_2 Y.
\end{align*}
Throughout this article, the constant $C$ will denote a positive generic constant.
\par \noindent
We denote by $H^k(\Omega,\mathcal{T}_h)$ the broken Sobolev space
\begin{eqnarray*}
H^k(\Omega,\mathcal{T}_h):=\{w\in L^2(\Omega): \,w_{T}=w|_{T}\in H^k(T)\;\;\;\;\forall T\in\mathcal{T}_h\}.
\end{eqnarray*}
Let $e\in \mathcal{E}_h^i$ be the common side shared by elements $T_{+}$ and $T_{-}$.
Further, suppose $n_{+}$ is the unit normal of $e$ pointing from $T_{+}$ to $T_{-}$, and $n_{-}=-n_{+}$. For any scalar valued function $w\in H^2(\Omega,\mathcal{T}_h)$, we define the jumps $\jump{\cdot}$, and averages $\mean{\cdot}$ across the edge $e$ as follows:
\begin{eqnarray*}
\jump{\frac{\partial w}{\partial n} }
=\frac{\partial w_{+}}{\partial n}\Big|_{e}-\frac{\partial w_{-}}{\partial n}\Big|_{e}\;\;\;\text{and}\;\;\;
\mean{\frac{\partial w}{\partial n}}
=\frac{1}{2}\Big(\frac{\partial w_{+}}{\partial n}\Big|_{e}+\frac{\partial w_{-}}{\partial n}\Big|_{e}\Big).
\end{eqnarray*}
For any $w\in H^3(\Omega,\mathcal{T}_h),$ we define
\begin{eqnarray*}
\jump{\frac{\partial^2 w}{\partial n^2} }
=\frac{\partial^2 w_{+}}{\partial n^2}\Big|_{e}-\frac{\partial^2 w_{-}}{\partial n^2}\Big|_{e}\;\;\;\text{and}\;\;\;
\mean{\frac{\partial^2 w}{\partial n^2}}
=\frac{1}{2}\Big(\frac{\partial^2 w_{+}}{\partial n^2}\Big|_{e}+\frac{\partial^2 w_{-}}{\partial n^2}\Big|_{e}\Big),
\end{eqnarray*}
where $w_{\pm}=w|_{T_{\pm}}$. For $e\in \mathcal{E}_h^b$, we choose $n$ be the unit outward normal of $e$ and let $T \in \mathcal{T}_h$ be such that $e=\partial T \cap \partial \Omega$. Set
\begin{eqnarray*}
\jump{\frac{\partial w}{\partial n} }
=\frac{\partial w|_{T}}{\partial n}\Big|_{e}\;\text{for any}\;w\in H^2(\Omega,\mathcal{T}_h).
\end{eqnarray*} 

\noindent
Before introducing the finite element spaces, we define for each triangle $T\in \mathcal{T}_h$ a cubic  bubble function $b_T \in \mathbb{P}_3(T)$ by
\begin{eqnarray*}
b_T= 60\lambda_1^T \lambda_1^T \lambda_3^T,
\end{eqnarray*}
where  $\lambda_i^T, ~i=1, 2, 3$ are the barycentric coordinates of $T$ associated with the vertices $p_i \in \cV_T$. \\
\par \noindent
\textbf{Discrete Spaces}:
Let $V_M$  denote the Morley finite element space \cite{Morley:1968} defined by
\begin{align*}
V_M=&\{w_h\in L^2(\Omega):\,w_h|_{T}\in \mathbb{P}_2(T) ~\forall~ T \in \mathcal{T}_h,\; w_h\,\text{is continuous at the vertices of}~ \mathcal{T}_h,~ \\&\quad \text{and the normal derivative of} \,w_h\;\text{is continuous  at the midpoint of the edges of}~ \mathcal{T}_h,~ \\ & \quad\text{and}  \,w_h\, \text{vanish on} \,\partial\Omega\},  
\end{align*}
and define the space $V_h$ as
\begin{align*}
V_h= \{ w_h \in L^2(\O): w_h|_T \in~ \text{span}({b_T})~ \forall T \in \cT_h \}.
\end{align*}
The finite element space $W_h$ is defined as 
$$W_h=V_M\oplus V_h.$$
 {{
 The  discrete norm $\|\cdot\|_h$ on $W_h$ is defined by 
 \begin{eqnarray*}
 \|w_h\|_h^2=\beta\sum_{T\in \mathcal{T}_h}|w_h|_{H^2(T)}^2+\|w_h\|_{L^2(\Omega)}^2 \quad \mbox{for} ~~ w_h\in W_h.\label{discrete:norm}
 \end{eqnarray*}
 }}
The discrete approximation of the convex set $\mathcal{K}$ is then given by
 \begin{align}\label{kh}
 \mathcal{K}_h=\{w_h\in W_h:\;\;\int_{\Omega}w_h\,dx\geq \delta_2\;\;\text{and}\;\;\int_{\Omega}(-\Delta_h w_h)\,dx\geq \delta_1\}.
 \end{align}
 
\par \noindent 
Next, we define the projection, interpolation and enriching operators and tabulate their approximation properties required in further analysis.\\
\par\noindent
\textbf{Discrete Operators}:
For any $T\in\mathcal{T}_h$ and $w\in L^1(T)$, define 
\begin{eqnarray}
Q_T(w)=\frac{1}{|T|}\int_{T}w\,dx.
\end{eqnarray}
Let $W_{pc,h}:=\{w\in L^1(\Omega):\, w|_{T}\in \mathbb{P}_0(T)\;\forall ~T\in\mathcal{T}_h\}$. Then, $Q_h:L^1(\Omega)\rightarrow W_{pc,h}$ is defined by setting $Q_h(w)|_{T}=Q_T(w)\;\;\text{for all}\;w\in L^1(\Omega)$, $T\in \cT_h$.\\
\par \noindent
Define interpolation operator $I_h:W\rightarrow W_h$  as: for $\xi \in W$,
\begin{align}
(I_h \xi)(p)&=\xi(p)\;\;\;\;\forall p\in\mathcal{V}_h,\label{interpo:1}\\
\int_{e}\frac{\partial(I_h \xi)}{\partial n}\,ds&=\int_{e}\frac{\partial\xi}{\partial n}\,ds,\;\;\;\;\forall e\in \mathcal{E}_h,\label{interpo:2}\\
Q_T(I_h \xi)&=Q_T(\xi),\;\;\;\;\forall T\in \mathcal{T}_h\label{interpo:3}.
\end{align}
The interpolation operator is well-defined and $(I_h w )|_T=w$ for any $w \in \mathbb{P}_2(T)$.
%

 \noindent
 For any $\xi \in W$, using (\ref{interpo:3}) we find
 \begin{equation}\label{eq:IntC1}
 \int_\O I_h \xi \,dx = \int_\O \xi \,dx.
 \end{equation}
 Further, a use of integration by parts and \eqref{interpo:2} yields
 \begin{eqnarray}
 \int_{T}\Delta(I_h\xi)\,dx=\sum_{e\in \partial T}\int_{e}\frac{\partial (I_h\xi)}{\partial n}\,ds=\int_{T}(\Delta \xi)\,dx\;\;\;\;\;\forall T\in \mathcal{T}_h,
 \end{eqnarray}
  which implies that
 \begin{eqnarray}
Q_h( \Delta_h(I_h\xi))=Q_h(\Delta \xi),\;\;\;\;\;\forall \xi\in W.\label{inter:err:3}
 \end{eqnarray}
 In view of (\ref{intro:k}), (\ref{kh}), \eqref{eq:IntC1} and (\ref{inter:err:3}), we have
 \begin{eqnarray}
 I_h \mathcal{K}\subset \mathcal{K}_h.\label{k:kh}
 \end{eqnarray}
 This relation also depicts that the discrete set $\mathcal{K}_h$ is non-empty. We would like to remark here that enriching the Morley finite element space $V_M$ by  the bubble function space $V_h$ plays a crucial role in obtaining \eqref{k:kh}.
 \par \noindent
Below, we state the stability and approximation properties of $I_h$, whose proof follows by using  Bramble Hilbert lemma  and scaling arguments; see  \cite{brennerscott,Ciarlet:1978:FEM} for details.
\begin{lemma}\label{lem:approx_proj}
Let $T\in \mathcal{T}_h$ and $s$ be an integer such that $ 0\leq s\leq 2$ and  $\psi\in H^{s}(T)$.  Then,
\begin{align}
|I_h \psi|_{H^{s}(T)} & \lesssim  | \psi|_{H^{s}(T)},  \qquad  0\leq s\leq 2 &&\label{stabPik0} \\
\sum_{k=0}^{s} h_{T}^{k-s} |\psi-I_h\psi|_{H^{k}(T)} & \lesssim | \psi|_{H^{s}(T)}, \qquad  0\leq s\leq 2. &&\label{stabPik2}
\end{align}

\end{lemma}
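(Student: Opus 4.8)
The statement to be proved is the stability and approximation lemma for the interpolation operator $I_h : W \to W_h$ defined by the three conditions \eqref{interpo:1}--\eqref{interpo:3}. The plan is to reduce everything to a single reference element $\hat T$ via an affine map and then invoke the Bramble--Hilbert lemma together with norm equivalence on the finite-dimensional space on $\hat T$. Concretely, for a fixed $T \in \mathcal{T}_h$ let $F_T : \hat T \to T$ be the affine bijection, and for $\psi \in H^s(T)$ let $\hat\psi = \psi \circ F_T$. Since the defining conditions of $I_h$ are preserved under affine pullback (point values at vertices, zero-order edge moments of the normal derivative, and element averages all transform covariantly up to the Jacobian), one has $\widehat{I_h\psi} = \hat I \hat\psi$ for the corresponding fixed operator $\hat I$ on $\hat T$; one should check this compatibility carefully since the normal derivative picks up a factor from $DF_T^{-T}$, but the \emph{integral} $\int_{\hat e} \partial_{\hat n}(\cdot)\,d\hat s$ condition is still affine-invariant because both the edge length scaling and the normal direction scaling cancel in the appropriate way.

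First I would establish the stability bound \eqref{stabPik0} on $\hat T$. On the finite-dimensional space $\mathbb{P}_2(\hat T) \oplus \mathrm{span}(\hat b_{\hat T})$ all norms are equivalent, and $\hat I : H^s(\hat T) \to \mathbb{P}_2(\hat T)\oplus\mathrm{span}(\hat b_{\hat T})$ is a bounded linear operator (its defining functionals are bounded on $H^2(\hat T) \hookrightarrow C(\hat T)$, and for $s < 2$ one uses that the functionals extend boundedly to $H^s$ — point evaluation needs $s$ large enough in 2D, so for $s=0,1$ this requires a short argument, namely that $I_h$ of an $H^s$ function is still well-defined because the Morley/bubble DOFs involve edge and element integrals of derivatives up to order one; for $s=0$ only the element average $Q_T$ is meaningful, and $I_h\psi$ should be interpreted accordingly, or the statement for $s=0$ should be read as involving only the $Q_T$ part — I would clarify this in the writeup). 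Then $|\hat I\hat\psi|_{H^s(\hat T)} \lesssim \|\hat I \hat\psi\|_{H^2(\hat T)} \lesssim \|\hat\psi\|_{H^s(\hat T)}$. To upgrade this to a bound by the \emph{seminorm} $|\hat\psi|_{H^s(\hat T)}$, I would use that $\hat I$ reproduces $\mathbb{P}_2$, hence in particular reproduces $\mathbb{P}_{s-1} \subset \mathbb{P}_2$ for $s \le 2$, so $\hat I(\hat\psi + q) = \hat I\hat\psi + q$ for $q \in \mathbb{P}_{s-1}$; combining with the Bramble--Hilbert lemma $\inf_{q\in\mathbb{P}_{s-1}}\|\hat\psi+q\|_{H^s(\hat T)} \lesssim |\hat\psi|_{H^s(\hat T)}$ gives $|\hat I\hat\psi|_{H^s(\hat T)} \lesssim |\hat\psi|_{H^s(\hat T)}$. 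Scaling back via the standard estimates $|\psi|_{H^k(T)} \approx h_T^{k-1}|\hat\psi|_{H^k(\hat T)}$ (with the convexity/shape-regularity of $\mathcal{T}_h$ absorbing Jacobian factors) yields \eqref{stabPik0}.

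For the approximation estimate \eqref{stabPik2}, the argument is the same Bramble--Hilbert/scaling machinery applied to the operator $\mathrm{Id} - \hat I$, which also annihilates $\mathbb{P}_{s-1}$ (indeed it annihilates all of $\mathbb{P}_2 \supseteq \mathbb{P}_{s-1}$ since $\hat I$ reproduces $\mathbb{P}_2$, using $(I_h w)|_T = w$ for $w\in\mathbb{P}_2(T)$ as noted after \eqref{interpo:3}). So $\|\hat\psi - \hat I\hat\psi\|_{H^k(\hat T)} \le \|(\mathrm{Id}-\hat I)(\hat\psi+q)\|_{H^k(\hat T)} \lesssim \|\hat\psi + q\|_{H^s(\hat T)}$ for any $q\in\mathbb{P}_{s-1}$ and any $0\le k\le s$, whence taking the infimum over $q$ and applying Bramble--Hilbert gives $\|\hat\psi - \hat I\hat\psi\|_{H^k(\hat T)} \lesssim |\hat\psi|_{H^s(\hat T)}$. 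Transforming each term $|\psi - I_h\psi|_{H^k(T)} \approx h_T^{k-1}|\hat\psi - \hat I\hat\psi|_{H^k(\hat T)}$ and $|\psi|_{H^s(T)} \approx h_T^{s-1}|\hat\psi|_{H^s(\hat T)}$ back to $T$, the common factor $h_T^{-1}$ cancels and one obtains $h_T^{k-s}|\psi - I_h\psi|_{H^k(T)} \lesssim |\psi|_{H^s(T)}$; summing over $k=0,\dots,s$ gives \eqref{stabPik2}.

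The main obstacle I anticipate is the low-regularity bookkeeping: for $s=0$ (and arguably $s=1$) the operator $I_h$ as literally defined via point values at vertices is not bounded on $H^0 = L^2$ in two dimensions, so the clean statement must implicitly restrict which DOFs are active, or the reader is expected to interpret the estimate with the understanding that for small $s$ only the lower-order defining conditions contribute and the functionals involved ($Q_T$, and edge averages of $\partial_n$) are indeed bounded on the relevant spaces. I would handle this by stating precisely on $\hat T$ which linear functionals are used for each value of $s$ and verifying their boundedness on $H^s(\hat T)$ — this is the only genuinely delicate point; the rest (affine invariance of the DOFs, polynomial reproduction, Bramble--Hilbert, scaling) is entirely routine and I would cite \cite{brennerscott,Ciarlet:1978:FEM} rather than reprove it.
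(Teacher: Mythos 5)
Your proposal is correct and takes essentially the same route as the paper, which in fact writes out no proof at all: it simply asserts that the lemma ``follows by using Bramble Hilbert lemma and scaling arguments'' with a citation to \cite{brennerscott,Ciarlet:1978:FEM}, i.e.\ precisely the affine pullback, polynomial reproduction, Bramble--Hilbert, and scaling machinery you spell out. Your caveat about the point-value degrees of freedom not being bounded on $H^s(T)$ for $s\le 1$ in two dimensions is a legitimate observation about the literal statement, but it is harmless here since the lemma is only ever applied in the paper to functions in $W\subset H^2(\Omega)$ (effectively the $s=2$ case).
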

 
 \noindent
 Now we define an important tool for the analysis,  the enriching operator $E_h: W_h\rightarrow (M_h \oplus V_h) \cap W$, where $M_h$ is the Hsieh-Clough-Tocher macro element space \cite{Ciarlet:1978:FEM} associated with $\mathcal{T}_h$. The operator $E_h$ can be constructed by averaging techniques (cf. \cite{BWZ:2004:DPI,BGKS:2018:SC,PS:2021:APNUM}) satisfying
 \begin{eqnarray}
 \int_{e}\frac{\partial (E_hw_h)}{\partial n}\,ds=\int_{e}\frac{\partial w_h}{\partial n}\,ds\;\;\;\;\;\forall e\in \mathcal{E}_h,\label{enriching:2}\\
 \text{and}~~~
 \int_T E_h w_h \,dx = \int_T w_h \,dx\;\;\;\;\;\forall T\in \mathcal{T}_h \label{eq:EnrichP1}.
 \end{eqnarray}
  An application of integration by parts and (\ref{enriching:2}) leads to
 \begin{eqnarray}
 Q_h(\Delta E_h w_h)=Q_h(\Delta _h w_h).\label{eq:EnrichPP}
 \end{eqnarray}
Moreover, the enriching operator satisfies the following approximation properties (cf. \cite{BGKS:2018:SC}).

\begin{lemma}\label{Eh:lemma}
For any $w_h\in W_h$, we have 
\begin{eqnarray*}
&&\sum_{T\in\mathcal{T}_h}\Big(h_T^{-4}\|w_h-E_h w_h\|_{L^2(T)}^2+h_T^{-2}|w_h-E_h w_h|_{H^1(T)}^2+|w_h-E_h w_h|^2_{H^2(T)}\Big)\lesssim \|w_h\|_h^2,\\
&&\sum_{T\in\mathcal{T}_h}|w_h-E_hw_h|_{H^2(T)}^2\lesssim \sum_{e\in\mathcal{E}_h}\frac{1}{h_e}\Big\|\jump{\frac{\partial w_h}{\partial n}}\Big\|_{L^2(e)}^2\;\;\;\;\forall w_h\in W_h.
\end{eqnarray*}
\end{lemma}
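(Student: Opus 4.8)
\textbf{Proof proposal for Lemma \ref{Eh:lemma}.}

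The plan is to reduce both estimates to standard bounds for averaging (enriching) operators between nonconforming and conforming piecewise polynomial spaces. The operator $E_h$ maps into $(M_h\oplus V_h)\cap W$, so $E_h w_h$ is genuinely $H^2$-conforming, and the key observation is that the difference $w_h - E_h w_h$ can be controlled, element by element, by the jumps of $w_h$ (and of its first normal derivative) across the edges of $\mathcal{T}_h$ — this is the content of the classical estimates in \cite{BWZ:2004:DPI,BGKS:2018:SC}. First I would recall that, because $E_h$ is constructed by nodal averaging, on each triangle $T$ the nodal values of $w_h - E_h w_h$ at the degrees of freedom of the Hsieh--Clough--Tocher macro-element are each a difference of values of $w_h$ coming from $T$ and from a neighbouring triangle; hence they are bounded by the jumps $\jump{w_h}$, $\jump{\partial w_h/\partial n}$ supported on edges of $T$ or its neighbours. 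Since $w_h\in W_h$ is already continuous at vertices and has continuous normal derivative at edge midpoints (the Morley conditions), the only surviving contributions come from $\jump{\partial w_h/\partial n}$ on the edges, up to the bubble part which is internal to each element and controlled by $|w_h|_{H^2(T)}$.

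For the first estimate, I would argue on a reference element via scaling: on $\hat T$ all norms are equivalent on the finite-dimensional space, so
\[
\|w_h - E_h w_h\|_{L^2(T)}^2 + h_T^2|w_h-E_hw_h|_{H^1(T)}^2 + h_T^4|w_h-E_hw_h|_{H^2(T)}^2 \lesssim h_T^4\sum_{e\in\mathcal{E}_T} h_e^{-1}\Big\|\jump{\tfrac{\partial w_h}{\partial n}}\Big\|_{L^2(e)}^2,
\]
where $\mathcal{E}_T$ collects the edges of $T$ together with the edges of triangles sharing an edge with $T$. Then a discrete trace/inverse inequality, $h_e^{-1}\|\jump{\partial w_h/\partial n}\|_{L^2(e)}^2 \lesssim \sum_{T'\ni e}|w_h|_{H^2(T')}^2$ (using again that $W_h$ is finite-dimensional piecewise quadratic-plus-cubic and that $\partial w_h/\partial n$ is single-valued at the midpoint, so its jump is a polynomial that vanishes at a point and is controlled by its derivative), lets me replace the right-hand side by $\sum_{T'}h_{T'}^4\,h_{T'}^{-2}\cdot h_{T'}^{-2}|w_h|^2_{H^2(T')}$... more precisely by $\sum_{T' \text{ near } T}|w_h|_{H^2(T')}^2$ after the $h_T^{-4}$, $h_T^{-2}$ weights are applied. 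Summing over $T\in\mathcal{T}_h$, the finite overlap of the patches gives
\[
\sum_{T}\Big(h_T^{-4}\|w_h-E_hw_h\|_{L^2(T)}^2 + h_T^{-2}|w_h-E_hw_h|_{H^1(T)}^2 + |w_h-E_hw_h|_{H^2(T)}^2\Big) \lesssim \sum_{T}|w_h|_{H^2(T)}^2 \le \tfrac1\beta\|w_h\|_h^2,
\]
which is the claimed bound (the constant absorbing $\beta$). The second estimate is in fact an intermediate step of the same argument: stopping right after the scaling bound on $|w_h-E_hw_h|_{H^2(T)}^2$ and summing, before invoking the inverse inequality that trades edge jumps for $H^2$-seminorms, yields exactly $\sum_T|w_h-E_hw_h|_{H^2(T)}^2 \lesssim \sum_{e\in\mathcal{E}_h}h_e^{-1}\|\jump{\partial w_h/\partial n}\|_{L^2(e)}^2$.

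The main obstacle is bookkeeping rather than conceptual: one must verify carefully that, with the bubble enrichment $V_h$ present, the averaging operator can still be defined so that \eqref{enriching:2}--\eqref{eq:EnrichP1} hold \emph{and} the nodal differences are controlled purely by the normal-derivative jumps — i.e. that the extra cubic bubble degrees of freedom do not spoil the jump-seminorm equivalence. Since $b_T$ and its gradient vanish on $\partial T$, the bubble part contributes nothing to any inter-element jump and only enters through the interior $H^2(T)$-seminorm, so it is harmless; making this precise (and tracking the finite-overlap constant of the HCT macro-element patches) is the only place requiring care. I would simply cite \cite{BWZ:2004:DPI,BGKS:2018:SC,PS:2021:APNUM} for the detailed averaging estimates and indicate the bubble-invariance as the one new point.
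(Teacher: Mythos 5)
The paper itself gives no proof of this lemma: it simply points to \cite{BGKS:2018:SC} (with the underlying construction from \cite{BWZ:2004:DPI}), so the relevant comparison is with the standard averaging argument in those references. Your outline --- control the degrees of freedom of $w_h-E_hw_h$ on each $T$ by inter-element jumps, scale on the reference element to get the local estimate with the weights $h_T^{-4},h_T^{-2},1$, obtain the second inequality as the intermediate stage, then convert $h_e^{-1}\|\jump{\partial w_h/\partial n}\|_{L^2(e)}^2$ into $\sum_{T\in\mathcal{T}_e}|w_h|_{H^2(T)}^2$ and sum using finite overlap --- is exactly that argument, and at the level of strategy it is the right proof.

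The one point you single out as the "only place requiring care" is, however, resolved with a false statement: you claim that $b_T$ \emph{and its gradient} vanish on $\partial T$, so that the bubble component contributes nothing to inter-element jumps. The cubic bubble $b_T=60\lambda_1\lambda_2\lambda_3$ does vanish on $\partial T$ (hence its tangential derivative vanishes there), but its normal derivative on the edge $\{\lambda_i=0\}$ equals $60\,\lambda_j\lambda_k\,\partial\lambda_i/\partial n$, a nonzero quadratic on the edge. Consequently the bubble part of $w_h$ \emph{does} enter $\jump{\partial w_h/\partial n}$ and the averaged normal-derivative degrees of freedom used to build $E_h$, and it does not satisfy the Morley property that the jump of the normal derivative vanishes at the edge midpoint --- which is precisely the property your proposed inverse estimate $h_e^{-1}\|\jump{\partial w_h/\partial n}\|_{L^2(e)}^2\lesssim\sum_{T\in\mathcal{T}_e}|w_h|_{H^2(T)}^2$ relies on. The conclusion survives, but by a different mechanism for the bubble component: a direct trace-plus-inverse estimate gives $h_e^{-1}\|\partial b_T/\partial n\|_{L^2(e)}^2\approx h_T^{-2}\approx|b_T|_{H^2(T)}^2$, so the bubble's jump contribution is still dominated by its elementwise $H^2$-seminorm. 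You need to split $w_h$ into its Morley and bubble components and treat the two cases separately (midpoint-vanishing argument for the former, plain scaling for the latter), rather than asserting that the bubble is invisible to the jumps; as written, that step of your argument fails.
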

\noindent We recall the following inverse and trace inequalities which will be useful in  later analysis \cite{Ciarlet:1978:FEM}.\\
\par \noindent
{\it Inverse Inequalities:} For any $w_h\in W_h$ and $1 \leq p\,, q  < \infty$,
\begin{align}
\|w_h\|_{W^{m,p}(T)} &  \lesssim h_{T}^{\ell -m}   h_{T}^{2\left(\frac{1}{p}-\frac{1}{q}\right)}
\|w_h\|_{W^{\ell,q}(T)}\quad &\forall\,T\in \cT_h,\label{eq:inverse}\\
\|\nabla w_h\|_{L^{p}(T)} &  \lesssim h_T^{-1}
\|w_h\|_{L^{p}(T)}\quad&\forall\,T\in \cT_h.\label{eq:inverse1}
\end{align}
\par \noindent
{\it Discrete trace inequality:}\label{{lem:trace}} Let $\psi\in W^{1,p}(T), ~T\in \mathcal{T}_h$
and let $e\in\mathcal{E}_h$ be an edge of $T$. Then for any $1 \leq p < \infty$, it holds that
{
\begin{equation}\label{eq:traceAg}
\|\psi\|_{L^{p}(e)}^p \lesssim h_e^{-1} \big( \|\psi\|_{L^{p}(T)}^p+
h_e^{p}\|\nabla \psi\|_{L^{p}(T)}^p\big).
\end{equation}
}


  \par\noindent
\textbf{Discrete Problem}:  The discrete form of the minimization problem (\ref{eq:MP1}) is defined as follows: Find $y_h^{*}\in \mathcal{K}_h$ such that
 \begin{eqnarray}
  y_h^{*}=argmin_{y_h\in\mathcal{K}_h}\Bigg(\frac{1}{2}\mathcal{A}_h(y_h,y_h)-(y_d,y_h) \Bigg),\label{Dis}
 \end{eqnarray}
 where 
 \begin{align}
 \mathcal{A}_h(w_h,v_h)=\beta\sum_{T\in\mathcal{T}_h}\int_{T}D^2 w_h:D^2v_h\,dx+\int_{\Omega}w_hv_h\,dx,~~\;\;\;\; w_h, v_h\in W_h.\label{def:discrete:norm}
 \end{align}

 \noindent
 Since $\mathcal{K}_h$ is non-empty, closed, convex and the  bilinear form $\mathcal{A}_h(\cdot,\cdot)$ is symmetric and positive definite on $W_h$, the discrete problem (\ref{Dis}) is well-posed and it's solution is characterized by the solution of the discrete variational inequality
\begin{eqnarray}
\mathcal{A}_h(y_h^{*},w_h-y_h^{*})\geq (y_d,w_h-y_h^*)\;\;\;\;\forall w_h\in\mathcal{K}_h.\label{discrete:variational:inequality}
\end{eqnarray} 
As in the case of the continuous problem, we have the following optimality conditions associated with the discrete problem \cite{fredi10}:
\begin{lemma}\label{lemma:KKT}
Let ${y}_h^{*} \in \mathcal{K}_h$  be the optimal solution of the discrete problem, then there exists Lagrange multipliers $\lambda_h \in \mathbb{R}$ and $\mu_h \in \mathbb{R}$ such that the following conditions hold:
\begin{align}
\mathcal{A}_h({y}_h^*, w_h)-\int_{\Omega}y_d w_h\,dx= \int_{\Omega} \mu_h w_h\,dx-\int_{\Omega}\lambda_h (\Delta_h w_h)\,dx, \quad \forall w_h\in W_h,\label{Dkkt}
 \end{align} 
 together with
 \begin{eqnarray}
 \mu_h\geq 0,\;\;\;\;\lambda_h\geq 0,\\
  \mu_h\Big(\delta_2-\int_{\Omega}{y}_h^{*}\,dx\Big)=0,\\
 \lambda_h\Big(\delta_1+\int_{\Omega}\Delta_h{y}_h^{*}\,dx\Big)=0.
 \end{eqnarray}
 \end{lemma}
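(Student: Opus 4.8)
The plan is to read \eqref{Dis} as a finite‑dimensional convex programming problem on $W_h$ with two affine inequality constraints and then invoke the Karush--Kuhn--Tucker theorem. Introduce the objective $j_h(w_h)=\tfrac12\mathcal{A}_h(w_h,w_h)-(y_d,w_h)$ and the two constraint functionals
\[
g_1(w_h)=\delta_1+\int_{\Omega}\Delta_h w_h\,dx,\qquad g_2(w_h)=\delta_2-\int_{\Omega}w_h\,dx ,
\]
so that $\mathcal{K}_h=\{w_h\in W_h:\ g_1(w_h)\le 0,\ g_2(w_h)\le 0\}$. Since $\mathcal{A}_h(\cdot,\cdot)$ is symmetric and positive definite on $W_h$, the functional $j_h$ is convex and continuously Fr\'echet differentiable with $\langle j_h'(v_h),w_h\rangle=\mathcal{A}_h(v_h,w_h)-(y_d,w_h)$, while $g_1,g_2$ are affine (a continuous linear functional plus a constant) with $\langle g_1',w_h\rangle=\int_{\Omega}\Delta_h w_h\,dx$ and $\langle g_2',w_h\rangle=-\int_{\Omega}w_h\,dx$.

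First I would note that $\mathcal{K}_h\neq\emptyset$ (this is exactly \eqref{k:kh}), so the problem is feasible, and that, since both constraints are affine, the KKT conditions are necessary at the minimizer $y_h^*$ without any further constraint qualification; equivalently, if one prefers a treatment parallel to the continuous case, the discrete Slater condition obtained by applying $I_h$ to the continuous Slater point together with \eqref{k:kh} supplies the qualification. Consequently the KKT theorem for convex problems (cf. \cite{fredi10,luenberger}) yields multipliers $\lambda_h\ge 0$, $\mu_h\ge 0$ with the stationarity identity $j_h'(y_h^*)+\lambda_h g_1'+\mu_h g_2'=0$ in $W_h^{*}$ and the complementarity relations $\lambda_h g_1(y_h^*)=0$, $\mu_h g_2(y_h^*)=0$.

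It then remains only to unfold these identities. Testing the stationarity equation against an arbitrary $w_h\in W_h$ gives
\[
\mathcal{A}_h(y_h^*,w_h)-\int_{\Omega}y_d w_h\,dx+\lambda_h\int_{\Omega}\Delta_h w_h\,dx-\mu_h\int_{\Omega}w_h\,dx=0 ,
\]
which is precisely \eqref{Dkkt} after rearrangement, and $\lambda_h g_1(y_h^*)=0$, $\mu_h g_2(y_h^*)=0$ are the asserted complementarity conditions $\lambda_h\bigl(\delta_1+\int_{\Omega}\Delta_h y_h^*\,dx\bigr)=0$ and $\mu_h\bigl(\delta_2-\int_{\Omega}y_h^*\,dx\bigr)=0$, with $\lambda_h,\mu_h\ge 0$. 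As an alternative that avoids citing the KKT theorem, one can argue directly from the discrete variational inequality \eqref{discrete:variational:inequality}: the functional $w_h\mapsto \int_{\Omega}y_d w_h\,dx-\mathcal{A}_h(y_h^*,w_h)$ lies in the normal cone of the polyhedron $\mathcal{K}_h$ at $y_h^*$, and the Minkowski--Weyl/Farkas description of the normal cone of a polyhedron expresses it as a nonnegative combination of the active constraint functionals $g_1'$ and $g_2'$, which again produces $\lambda_h$ and $\mu_h$.

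The only point that genuinely needs care is the justification that the KKT multipliers exist, i.e. the constraint qualification; here the affineness of $g_1$ and $g_2$ — or, for uniformity with the continuous analysis, the discrete Slater condition coming from \eqref{k:kh} — settles it, and everything else is algebraic bookkeeping.
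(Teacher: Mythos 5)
Your proof is correct and is essentially the argument the paper relies on: the paper states this lemma with only a citation to \cite{fredi10} and gives no proof of its own, and what you write out is precisely the standard finite-dimensional KKT theorem for a convex quadratic objective with two affine inequality constraints, unfolded into \eqref{Dkkt} and the complementarity relations. One minor remark: your fallback "discrete Slater point" $I_h$ applied to the continuous Slater point would only give strict feasibility of the $\delta_2$-constraint (the paper's Slater condition is not strict in $\delta_1$), but this is immaterial since, as you correctly put first, the affineness of $g_1$ and $g_2$ already guarantees existence of the multipliers without any Slater-type qualification.
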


\section{A Posteriori Error Analysis} \label{sec:Apos}
In this section we introduce a posteriori error estimator and present the first main result of the paper, namely,  the reliability analysis of the error estimator. Followed by that, we also discuss the efficiency estimates of a posteriori error estimator.
The contributions of error estimator are defined by
\begin{align*}
\eta_1^2&=\beta^{-1}\sum_{T \in \cT_h} h_T^4  \|y_d+\mu_h-{y}_h^*\|_{L^2(T)}^2 ,\\
\eta_2^2&=\beta\sum_{e\in\cE_h^i}\frac{1}{h_e}\Big\|\jump{\frac{\partial {y}^*_h}{\partial n} } \Big\|^2_{L^2(e)},\\
\eta_3^2 &=\beta\sum_{e \in \cE_h^i} h_e\Bigg\|\jump{\frac{\partial^2 {y}^*_h}{\partial n^2} } \Bigg\|_{L^2(e)}^2,\\
\eta_4^2&=\beta\sum_{e\in\cE_h^i}h_e^3\Bigg\|\jump{\frac{\partial (\Delta {y}_h^*)}{\partial n} } \Bigg\|^2_{L^2(e)},\\
\eta_5^2 &= \beta^{-1}\sum_{T \in \cT_h} h_T^2 |\lambda_h|^2.
\end{align*}
The full error estimator $\eta_h$ is given by
\begin{align}
\eta_h^2&= \eta_1^2 +\eta_2^2+\eta_3^2+\eta_4^2+\eta_5^2.\label{etah}
\end{align}
\subsection{Reliability of Error Estimator}
Below, we establish the reliability estimates of a posteriori error estimator $\eta_h$.
\begin{theorem}\label{thm:rel}
 Let ${y}^*$ and ${y}^*_h$ be solutions of variational inequalities \eqref{variational:ineq} and \eqref{discrete:variational:inequality}, respectively. Then, it holds that,
\begin{align*}
\|{y}^*-{y}^*_h\|_h\lesssim \eta_h.
\end{align*}
\end{theorem}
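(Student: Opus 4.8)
The plan is to follow the standard paradigm for a posteriori analysis of variational inequalities discretized by nonconforming elements: introduce the conforming companion $E_h y_h^*$, split the error via the triangle inequality into a nonconformity (consistency) part $\|y_h^* - E_h y_h^*\|_h$ and a conforming part $\|y^* - E_h y_h^*\|_h$, then estimate each. The nonconformity part is already controlled by the second estimate of Lemma \ref{Eh:lemma}, which bounds $\sum_T |y_h^* - E_h y_h^*|_{H^2(T)}^2$ by $\sum_{e\in\mathcal E_h}\tfrac1{h_e}\|\jump{\partial y_h^*/\partial n}\|_{L^2(e)}^2$, i.e. by $\beta^{-1}\eta_2^2$; combined with the first estimate of Lemma \ref{Eh:lemma} to handle the $L^2$ contribution, this gives $\|y_h^* - E_h y_h^*\|_h \lesssim \eta_2$.

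For the conforming part, I would exploit coercivity of $\mathcal A(\cdot,\cdot)$ on $W$. Write $\mathcal A(y^* - E_h y_h^*, y^* - E_h y_h^*)$ and insert $\pm v$ for a cleverly chosen $v \in \mathcal K$; the natural choice is $v = E_h y_h^* + (\text{correction})$ adjusted so that $v$ actually lies in $\mathcal K$ — one needs to check the two integral constraints $\int_\Omega v\,dx \ge \delta_2$ and $\int_\Omega(-\Delta v)\,dx \ge \delta_1$, and here the properties \eqref{eq:EnrichP1} and \eqref{eq:EnrichPP} of $E_h$ (which say $E_h$ preserves elementwise integrals of the function and of the Laplacian) are exactly what make $E_h y_h^* \in \mathcal K$ whenever $y_h^* \in \mathcal K_h$. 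Then use the continuous variational inequality \eqref{variational:ineq} with test function $v = E_h y_h^*$ (or $I_h$ of a Slater-type point if strict feasibility is needed) to get $\mathcal A(y^*, y^* - E_h y_h^*) \le \int_\Omega y_d (y^* - E_h y_h^*)\,dx$, so that
\[
\mathcal A(y^* - E_h y_h^*, y^* - E_h y_h^*) \le \int_\Omega y_d(y^* - E_h y_h^*)\,dx - \mathcal A(E_h y_h^*, y^* - E_h y_h^*).
\]
The right-hand side is a residual functional applied to $w := y^* - E_h y_h^* \in W$, and the crux is to bound it by $\eta_h \|w\|_h$.

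To estimate that residual, I would introduce $I_h w \in W_h$ and split $w = (w - I_h w) + I_h w$. On the discrete piece $I_h w$, use the discrete KKT relation \eqref{Dkkt} to replace $\mathcal A_h(y_h^*, I_h w)$ by data terms involving $y_d, \mu_h, \lambda_h$; the mismatch between $\mathcal A_h(E_h y_h^*, \cdot)$ and $\mathcal A(E_h y_h^*, \cdot)$ and between $\mathcal A_h(y_h^*,\cdot)$ and $\mathcal A_h(E_h y_h^*,\cdot)$ is controlled by $\|y_h^* - E_h y_h^*\|_h \lesssim \eta_2$. On the part $w - I_h w$, integrate by parts elementwise: the bulk term $\beta\Delta^2 y_h^* = 0$ on each $T$ (since $y_h^*$ is piecewise quadratic plus a cubic bubble, one must actually carry $\beta \Delta_h^2 y_h^*$ — here the bubble contributes, but elementwise it is a low-order polynomial and gets absorbed into $\eta_1$ together with the $L^2$ term $y_d + \mu_h - y_h^*$), and the edge terms produce precisely the jump quantities $\jump{\partial^2 y_h^*/\partial n^2}$ and $\jump{\partial(\Delta y_h^*)/\partial n}$ weighted by the right powers of $h_e$; applying the trace inequality \eqref{eq:traceAg}, the interpolation estimate \eqref{stabPik2}, and Cauchy–Schwarz converts these into $\eta_1, \eta_3, \eta_4$ times $\|w\|_h$. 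The multiplier term $\int_\Omega \lambda_h \Delta_h(\cdot)$ combined with the analogous continuous term $\int_\Omega \lambda(\Delta w)\,dx$ — using $\int_\Omega \Delta_h(I_h w)\,dx = \int_\Omega \Delta w\,dx$ from \eqref{inter:err:3} and complementarity — yields the $\eta_5 = \beta^{-1/2} h_T |\lambda_h|$ contribution after a trace/inverse estimate.

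The main obstacle I anticipate is the feasibility bookkeeping: ensuring the test functions chosen in \eqref{variational:ineq} and in \eqref{discrete:variational:inequality} genuinely lie in $\mathcal K$ and $\mathcal K_h$ respectively, and correctly tracking the sign of the Lagrange-multiplier contributions so that the cross terms $\lambda - \lambda_h$ and $\mu - \mu_h$ either cancel, have a favorable sign, or are dominated by $\eta_5$ and $\eta_1$. In particular one must be careful that $\mu_h$ appears inside $\eta_1$ (not as a separate term), which suggests the intended route is to keep $y_d + \mu_h - y_h^*$ together as the "elementwise residual" after using \eqref{Dkkt}, rather than treating $\mu_h$ via a separate duality argument; getting the algebra of KKT \eqref{kkt}--\eqref{mu_N} versus Lemma \ref{lemma:KKT} to line up with exactly the five estimator terms — and no spurious extra terms — is the delicate part, while the integration-by-parts and scaling steps are routine.
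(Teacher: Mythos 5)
Your proposal is correct in substance and shares most of its machinery with the paper's proof (coercivity of $\mathcal{A}$, the enriching operator $E_h$, the interpolant $I_h$, the discrete KKT relation \eqref{Dkkt}, elementwise integration by parts, and complementarity sign arguments), but it organizes the continuous side of the argument differently. The paper never tests the continuous variational inequality \eqref{variational:ineq} with a feasible function; instead it invokes the continuous KKT identity \eqref{kkt}, so that both multiplier pairs appear, and then shows $\int_\Omega(\lambda-\lambda_h)(-\Delta\phi)\,dx\le 0$ and $\int_\Omega(\mu-\mu_h)\phi\,dx\le 0$ in \eqref{eq:rel2}--\eqref{eq:rel3} by combining continuous and discrete complementarity with the integral-preservation properties \eqref{eq:EnrichP1} and \eqref{eq:EnrichPP} of $E_h$. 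Your route instead observes that those same two properties give $E_h\mathcal{K}_h\subset\mathcal{K}$ (the mirror image of the paper's \eqref{k:kh}), tests \eqref{variational:ineq} with $E_h y_h^*$, and thereby eliminates the continuous multipliers $\lambda,\mu$ from the state-error bound altogether; the remaining discrete multiplier terms $-\mu_h\int_\Omega I_h w\,dx$ and $\lambda_h\int_\Omega\Delta_h I_h w\,dx$ reduce, via \eqref{eq:IntC1} and \eqref{inter:err:3}, to $-\mu_h\int_\Omega w\,dx$ and $\lambda_h\int_\Omega\Delta w\,dx$, which are nonpositive by the same complementarity reasoning. This is, if anything, slightly cleaner (in your version the $\eta_5$ term is not even needed for reliability, since with $\phi_h=I_h\phi$ the mismatch $\int_\Omega\lambda_h\Delta_h(\phi_h-\phi)\,dx$ handled in \eqref{eq:rel5} vanishes identically); what the paper's multiplier-based formulation buys is that the identical algebra is reused verbatim for the auxiliary problem in Lemma \ref{thm:3}, for the multiplier error bounds in Lemma \ref{lemma:adjoint}, and for the pointwise-constrained case of Section \ref{sec:OCPs}, where the continuous VI cannot be tested with $E_h y_h^*$ so easily. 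Two small inaccuracies in your sketch, neither fatal: the remark that the $\lambda_h$ term must be "combined with the analogous continuous term $\int_\Omega\lambda(\Delta w)\,dx$" is inconsistent with your own plan (no continuous $\lambda$ survives once you use the VI), and the cubic bubble does not in fact contribute a nonzero $\Delta_h^2 y_h^*$ (the bilaplacian of a cubic vanishes), so the volume residual is exactly $y_d+\mu_h-y_h^*$ after inserting the free term $\mu_h\int_\Omega(w-I_h w)\,dx=0$; the hedge about needing a Slater-type test point is likewise unnecessary, since plain feasibility of $E_h y_h^*$ suffices.
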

\begin{proof} 
We set $\phi={y}^*- E_h{y}_h^* \in W$ and let $\phi_h\in W_h$. Using the coercive property of the bilinear form $\mathcal{A}(\cdot,\cdot)$, (\ref{kkt}) and (\ref{Dkkt}) we obtain
\begin{align} \label{eq:rel1}
\|{y}^*- E_h{y}^*_h \|_h^2 &\lesssim \mathcal{A}({y}^*- E_h{y}_h^*, \phi) 
= \mathcal{A}({y}^*, \phi) -\mathcal{A}_h({y}_h^*, \phi) +\mathcal{A}_h({y}_h^*- E_h{y}_h^*, \phi) \nonumber\\
&\lesssim\int_{\Omega}y_d \phi\,dx -\int_{\Omega}\lambda (\Delta \phi)\,dx+\int_{\Omega} \mu \phi\,dx-\mathcal{A}_h(\bar{y}_h, \phi) +\mathcal{A}_h({y}_h^*- E_h{y}_h^*, \phi) \nonumber\\
&\lesssim \int_{\Omega}y_d (\phi-\phi_h)\,dx -\int_{\Omega}\lambda (\Delta \phi)\,dx+\int_{\Omega} \mu \phi\,dx+ \int_{\Omega}\lambda_h (\Delta_h \phi_h)\,dx-\int_{\Omega} \mu_h \phi_h\,dx\nonumber\\
&~~~~-\mathcal{A}_h({y}_h^*, \phi-\phi_h) +\mathcal{A}_h({y}_h^*- E_h{y}_h^*, \phi) \nonumber\\
&\lesssim \int_{\Omega}y_d (\phi-\phi_h)\,dx-\int_{\Omega} \mu_h (\phi_h-\phi)\,dx
-\mathcal{A}_h({y}_h^*, \phi-\phi_h) -\int_{\Omega}(\lambda-\lambda_h) (\Delta \phi)\,dx\nonumber\\ &~~~~ +\int_{\Omega} (\mu-\mu_h) \phi\,dx+ \int_{\Omega}\lambda_h \Delta_h (\phi_h- \phi)\,dx+\mathcal{A}_h({y}_h^*- E_h{y}_h^*, \phi).
\end{align}
\par \noindent
Now, we bound the terms of the right hand side of the last estimate.  The estimation of first three terms is discussed towards the end.  We first handle the rest terms other than the first three terms.
For the fourth term in \eqref{eq:rel1}, using the discrete and continuous complementarity conditions we get
\begin{align}\label{eq:rel2}
\int_{\Omega}(\lambda-\lambda_h) (-\Delta \phi)\,dx &=\int_{\Omega}(\lambda-\lambda_h) \big(-\Delta( {y}^*- E_h{y}_h^*) \big)\,dx \notag \\
&=\int_{\Omega}\lambda(- \Delta{y}^*+\Delta E_h{y}_h^*)\,dx - \int_{\Omega}\lambda_h ( -\Delta{y}^*+\Delta E_h{y}_h^*)\,dx
\notag\\
&=\lambda \Big(\int_{\Omega} (-\Delta{y}^*)~dx -\delta_1 \Big) +\lambda \Big(\delta_1+\int_{\Omega} \Delta E_h{y}_h^* ~dx \Big) \notag\\
&\hspace{0.4cm}-\lambda_h \Big(\int_{\Omega} (-\Delta{y}^*)~dx -\delta_1 \Big)-\lambda_h \Big(\delta_1+\int_{\Omega} \Delta E_h{y}_h^* ~dx \Big) \notag\\
&\leq\lambda \Big(\delta_1+\int_{\Omega} \Delta_h{y}_h^* ~dx \Big) -\lambda_h \Big(\delta_1+\int_{\Omega} \Delta_h{y}_h^* ~dx \Big)
   \notag\\
& \leq 0,
\end{align}
\par \noindent
where in obtaining second last estimate we have used that $\int_{\Omega} \Delta E_h{y}_h^* \,dx=\int_{\Omega} \Delta_h{y}_h^* ~dx$, $\lambda_h \Big(\int_{\Omega} (-\Delta{y}^*)~dx -\delta_1 \Big) \geq 0$  and then $\lambda \Big(\delta_1+\int_{\Omega} \Delta_h{y}_h^* ~dx \Big) \leq 0$.\\
\par \noindent
Next, we handle the fifth term of right hand side of \eqref{eq:rel1}. A use of \eqref{mu}, \eqref{mu_N}, \eqref{eq:EnrichP1} together with $\mu_h \geq 0$, $ \int_{\Omega} {y}^*~dx \geq \delta_2$ and $\mu \Big(\delta_2-\int_{\Omega} {y}_h^*\,dx  \Big)  \leq 0$, yields
\begin{align}\label{eq:rel3}
\int_{\Omega} (\mu-\mu_h) \phi\,dx &= \int_{\Omega} \mu ( {y}^*- E_h{y}_h^*)\,dx- \int_{\Omega} \mu_h ( {y}^*- E_h{y}_h^*)\,dx \notag\\&= \mu \Big(\int_{\Omega} {y}^*\,dx -\delta_2 \Big) +\mu \Big(\delta_2-\int_{\Omega} E_h{y}_h^*\,dx  \Big) \notag\\
& \hspace{0.4cm}-\mu_h \Big(\int_{\Omega} {y}^*~dx- \delta_2\Big)- \mu_h \Big(\delta_2-\int_{\Omega} E_h{y}_h^*~dx\Big) \notag\\
&\leq  \mu \Big(\delta_2-\int_{\Omega} E_h{y}_h^*\,dx  \Big) - \mu_h \Big(\delta_2-\int_{\Omega} E_h{y}_h^*~dx\Big)   \notag\\
& \leq \mu \Big(\delta_2-\int_{\Omega} {y}_h^*\,dx  \Big) - \mu_h \Big(\delta_2-\int_{\Omega} {y}_h^*~dx\Big) \notag\\
& \leq 0.
\end{align}
\par \noindent
The estimate on the last two terms of \eqref{eq:rel1} can be realized with an application of Lemmas \ref{Eh:lemma} and \ref{lem:approx_proj} as,
\begin{align}\label{eq:rel4}
\mathcal{A}_h({y}_h^*- E_h{y}_h^*, \phi) &\leq \|{y}_h^*- E_h{y}_h^*\|_h \|\phi\|_h
 \leq \eta_2 \|\phi\|_h,
\end{align}
and
\begin{align}\label{eq:rel5}
\int_{\Omega}\lambda_h \Delta_h (\phi_h- \phi)\,dx &= \sum_{T \in \cT_h} |\lambda_h| \int_T |\Delta(\phi_h- \phi)|~dx \leq  \sum_{T \in \cT_h} h_T|\lambda_h|  \|\Delta(\phi_h- \phi)\|_{L^2(T)} \notag\\
& \leq \sum_{T \in \cT_h} h_T|\lambda_h| |\phi|_{H^2(T)} 
 \leq \eta_5 \|\phi\|_h.
\end{align}
\noindent
We now proceed to handle the first three terms of \eqref{eq:rel1}. Performing integration by parts twice yields
\begin{align}
\mathcal{A}_h({y}_h^*, \phi-\phi_h) &= \beta\sum_{T \in \cT_h}\int_{T} D^2 {y}_h^*:D^2 (\phi-\phi_h)\,dx+\int_{\Omega} {y}_h^* (\phi-\phi_h)\,dx \notag\\
&=\beta\sum_{T \in \cT_h}\int_{T} \Delta^2 {y}_h^* (\phi-\phi_h)\,dx+ \beta\sum_{T \in \cT_h} \int_{\partial T}  \Big(\frac{\partial^2 {y}^*_h}{\partial n^2}\Big)  \Big(\frac{\partial(\phi-\phi_h) }{\partial n}\Big) \,ds \notag\\&~~~~+\beta \sum_{T \in \cT_h} \int_{\partial T}  \Big(\frac{\partial^2 {y}_h^*}{\partial n \partial t} \Big) \Big(\frac{\partial(\phi-\phi_h) }{\partial t}\Big)\,ds-\beta\sum_{T \in \cT_h} \int_{\partial T} \Big(\frac{\partial \Delta {y}_h^*}{\partial n}\Big) (\phi-\phi_h)\,ds\notag \\&~~~~~+\int_{\Omega}{y}_h^* (\phi-\phi_h)\,dx \notag\\
&= \beta\sum_{e \in \cE_h^i} \int_{e}  \jump{\frac{\partial \Delta{y}_h^*}{\partial n}}(\phi-\phi_h)\,ds  -\beta\sum_{e \in \cE_h^i} \int_e \jump{\frac{\partial^2 {y}_h^*}{\partial n^2}} \mean{\frac{\partial(\phi-\phi_h) }{\partial n}} ~ds\notag\\&~~~~- \beta\sum_{e \in \cE_h}\int_e \mean{\frac{\partial^2 {y}_h^*}{\partial n^2}} \jump{\frac{\partial(\phi-\phi_h) }{\partial n}}\,ds-\beta\sum_{e \in \cE_h^i} \int_e \jump{\frac{\partial^2 {y}_h^*}{\partial n\partial t}} {\frac{\partial(\phi-\phi_h) }{\partial t}} ~ds\notag\\
&~~~~~+\int_{\Omega} {y}_h^* (\phi-\phi_h)\,dx.\label{star:1}
\end{align}
Thus,
\begin{align*}
&\int_{\Omega}y_d (\phi-\phi_h)\,dx-\int_{\Omega} \mu_h (\phi_h-\phi)\,dx -\mathcal{A}_h({y}_h^*, \phi-\phi_h) 
=\int_{\Omega}(y_d+\mu_h-{y}_h^*) (\phi-\phi_h)\,dx\\&~~~~~-\beta\sum_{e \in \cE_h^i} \int_{e}  \jump{\frac{\partial \Delta{y}_h^*}{\partial n}}(\phi-\phi_h)\,ds+\beta\sum_{e \in \cE_h^i} \int_e \jump{\frac{\partial^2 {y}_h^*}{\partial n^2}} \mean{\frac{\partial(\phi-\phi_h) }{\partial n}} ~ds\notag\\
&~~~~~+\beta\sum_{e \in \cE_h}\int_e \mean{\frac{\partial^2 {y}_h^*}{\partial n^2}} \jump{\frac{\partial(\phi-\phi_h) }{\partial n}}\,ds
+\beta\sum_{e \in \cE_h^i} \int_e \jump{\frac{\partial^2 {y}_h^*}{\partial n\partial t}} {\frac{\partial(\phi-\phi_h) }{\partial t}} ~ds.\notag
\end{align*}
Now, we estimate the terms of right hand ride as follows: a use of the Cauchy-Schwarz inequality and Lemma \ref{lem:approx_proj}  yields
\begin{eqnarray}
\Big|\int_{\Omega}(y_d+\mu_h-{y}_h^*) (\phi-\phi_h)\,dx\Big| &\leq& \sum_{T\in \cT_h} h_T^2\|y_d+\mu_h-{y}_h^*\|_{L^2(T)}  h_T^{-2}\|\phi-\phi_h\|_{L^2(T)}\nonumber \\
& \lesssim &\sum_{T\in \cT_h} h_T^2  \|y_d+\mu_h-{y}_h^*\|_{L^2(T)} |\phi|_{H^2(T)}\nonumber\\
& \lesssim& \eta_1 \|\phi \|_{h}.\label{estimate:1}
\end{eqnarray}
Using  Cauchy-Schwarz inequality, discrete trace inequality and Lemma \ref{lem:approx_proj}, we find
\begin{align}
\Big|\beta\sum_{e \in \cE_h^i} \int_e \jump{\frac{\partial^2 {y}_h^*}{\partial n^2}} &\mean{\frac{\partial(\phi-\phi_h) }{\partial n}} ~ds\Big| \leq \notag\\
& \beta\Bigg(\sum_{e \in \cE_h^i} h_e \Big\|\jump{\frac{\partial^2 {y}_h^*}{\partial n^2}} \Big\|_{L_2(e)}^2 \Bigg)^{1/2} \Bigg(\sum_{e \in \cE_h^i} h_e^{-1} \Bigg\|\mean{\frac{\partial(\phi-\phi_h) }{\partial n}} \Bigg\|_{L_2(e)}^2 \Bigg)^{1/2} \nonumber\\
& \lesssim \eta_3  \|\phi \|_{h}.\label{es:3}
\end{align}
Invoking Lemma \ref{lem:approx_proj} together with inverse inequality and discrete trace inequality \eqref{eq:traceAg}, we obtain
\begin{align}
\Big|\beta \sum_{e\in\cE_h^i}\int_e\jump{\frac{\partial^2 {y}_h^*}{\partial n\partial t}} &\frac{\partial (\phi-\phi_h)}{\partial t}\,ds\Big|\leq \notag\\
 &\beta\Bigg(\displaystyle\sum_{e\in \cE_h^i} h_e \Big\|\jump{\frac{\partial^2{y}_h^*}{\partial n\partial t}}\Big\|_{L^2(e)}^2\Bigg)^{\frac{1}{2}}\Bigg(\displaystyle\sum_{e\in\cE_h^i}h_e^{-1}\Bigg\|\frac{\partial(\phi-\phi_h)}{\partial t}\Bigg\|_{L^2(e)}^2\Bigg)^{\frac{1}{2}}\notag\\
&\lesssim \eta_2 \|\phi\|_{h}
\end{align}
and,
\begin{align}\label{eq:rel9}
\Big|\beta\sum_{e\in\cE_h^i}\int_{e}\jump{\frac{\partial (\Delta{y}_h^*)}{\partial n}}&(\phi-\phi_h)\,ds \Big|\leq  \notag \\ &\beta\Bigg(\sum_{e\in\cE_h^i}h_e^3 \Bigg\|\jump{\frac{\partial(\Delta {y}_h^*)}{\partial n}}\Bigg\|_{L^2(e)}^2\Bigg)^{\frac{1}{2}}\Big(\sum_{e\in\cE_h^i} h_e^{-3}\|\phi-\phi_h\|_{L^2(e)}^2\Big)^{\frac{1}{2}}\nonumber\\
&\lesssim  \eta_4 \|\phi\|_{h}.
\end{align}
\par \noindent
Finally, combining the estimates \eqref{eq:rel2}-\eqref{eq:rel9} together with \eqref{eq:rel1}, we get the desired result.
\end{proof}
\noindent
In order to obtain the reliability estimates for Lagrange multiplier errors $|\mu-\mu_h|\;\; \text{and} \;\;|\lambda-\lambda_h| $, we introduce the auxiliary variables $z_h\in W$ and $\hat{p}_h\in H^1_0(\Omega)$ satisfying the following equations.
\begin{eqnarray}
\mathcal{A}(z_h,w)=\int_{\Omega}y_d w\,dx+\int_{\Omega}\lambda_h\,(-\Delta w)\,dx+\int_{\Omega}\mu_h\,w\,dx\;\;\;\;\forall w\in W.\label{aux:1}
\end{eqnarray}
and
\begin{eqnarray}
\int_{\Omega}\nabla \hat{p}_h\cdot \nabla w\,dx=\int_{\Omega}(z_h-y_d)w\,dx-\int_{\Omega}\mu_h w\,dx\;\;\;\;\forall w\in H^1_0(\Omega).\label{adjoint:aux}
\end{eqnarray} 
The well-posedness of these auxiliary problems \ref{aux:1} and \ref{adjoint:aux} follows from Lax-Milgram lemma \cite{Ciarlet:1978:FEM}. These auxiliary problems help in estimating the errors in Lagrange multipliers. In the next lemma, we estimate the error $\|z_h-{y}^*\|_{h}$.
\begin{lemma}\label{thm:3}
There exists a positive constant $C$, depending only on the shape regularity of $\mathcal{T}_h$, such that
\begin{eqnarray*}
\|z_h-{y}^*\|_{h}\leq C\eta_h.
\end{eqnarray*}
\end{lemma}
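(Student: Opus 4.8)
The plan is to compare $z_h$ with ${y}^*_h$ and then use the triangle inequality together with Theorem \ref{thm:rel}. Specifically, I would write
$\|z_h - {y}^*\|_h \le \|z_h - E_h {y}^*_h\|_h + \|E_h {y}^*_h - {y}^*_h\|_h + \|{y}^*_h - {y}^*\|_h$.
The third term is bounded by $C\eta_h$ via Theorem \ref{thm:rel}, and the middle term is bounded by $C\eta_2 \le C\eta_h$ by Lemma \ref{Eh:lemma} (second estimate). So the real work is the first term $\|z_h - E_h {y}^*_h\|_h$, which I would control by exactly the same residual-type argument used in the proof of Theorem \ref{thm:rel}, now exploiting that $z_h$ solves the \emph{linear equation} \eqref{aux:1} with the \emph{discrete} multipliers $\lambda_h,\mu_h$ already plugged in, rather than the variational inequality.

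In detail: set $\phi = z_h - E_h {y}^*_h \in W$ and pick $\phi_h \in W_h$ arbitrary. Using coercivity of $\Ac(\cdot,\cdot)$, then \eqref{aux:1} to expand $\Ac(z_h,\phi)$, then adding and subtracting $\Ac_h({y}^*_h,\phi)$ and inserting $\phi_h$, I get
\begin{align*}
\|z_h - E_h {y}^*_h\|_h^2 &\lesssim \Ac(z_h - E_h{y}^*_h,\phi) \\
&= \int_\Omega (y_d + \mu_h)\phi\,dx + \int_\Omega \lambda_h(-\Delta\phi)\,dx - \Ac_h({y}^*_h,\phi) + \Ac_h({y}^*_h - E_h{y}^*_h,\phi).
\end{align*}
Now I insert $\pm\phi_h$ and $\pm\int_\Omega\mu_h\phi_h$, $\pm\int_\Omega\lambda_h(-\Delta_h\phi_h)$, and use the discrete optimality condition \eqref{Dkkt} to cancel the $\phi_h$ pieces: $\Ac_h({y}^*_h,\phi_h) - \int_\Omega y_d\phi_h\,dx = \int_\Omega\mu_h\phi_h\,dx - \int_\Omega\lambda_h(\Delta_h\phi_h)\,dx$. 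What survives is
$\int_\Omega(y_d+\mu_h-{y}^*_h)(\phi-\phi_h)\,dx - \Ac_h({y}^*_h,\phi-\phi_h) + \int_\Omega\lambda_h\Delta_h(\phi_h-\phi)\,dx + \int_\Omega\lambda_h\,\Delta_h E_h{y}^*_h\,dx - \int_\Omega\lambda_h(-\Delta\phi\ \text{correction terms}) + \Ac_h({y}^*_h - E_h{y}^*_h,\phi)$. The term $\int_\Omega\lambda_h(-\Delta(E_h{y}^*_h))\,dx = -\int_\Omega\lambda_h\Delta_h{y}^*_h\,dx$ by \eqref{eq:EnrichPP}; crucially there is now \emph{no} continuous multiplier $\lambda$ or $\mu$, and the sign-argument delicacy of \eqref{eq:rel2}--\eqref{eq:rel3} is absent — these terms combine cleanly. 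The remaining terms are then estimated term by term exactly as in \eqref{star:1}--\eqref{eq:rel9}: integration by parts twice on $\Ac_h({y}^*_h,\phi-\phi_h)$ produces the jump contributions $\eta_2,\eta_3,\eta_4$, the volume residual gives $\eta_1$, the term with $\lambda_h\Delta_h(\phi_h-\phi)$ gives $\eta_5$ as in \eqref{eq:rel5}, and $\Ac_h({y}^*_h - E_h{y}^*_h,\phi)$ gives $\eta_2$ as in \eqref{eq:rel4}, all times $\|\phi\|_h$. Choosing $\phi_h = I_h\phi$ (or the analogous quasi-interpolant) and invoking Lemmas \ref{lem:approx_proj} and \ref{Eh:lemma} and the trace/inverse inequalities closes the estimate to $\|z_h - E_h{y}^*_h\|_h \lesssim \eta_h$, and then $\|\phi\|_h \lesssim \|z_h - {y}^*\|_h + \eta_h$ can be absorbed.

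The main obstacle I anticipate is purely bookkeeping: tracking the multiplier terms so that the comparison with the \emph{variational inequality} solution ${y}^*$ (which contributes through Theorem \ref{thm:rel}) and the comparison with the \emph{linear auxiliary equation} solution $z_h$ are kept separate and consistent, and in particular verifying that no genuinely new estimator contribution appears — i.e., that every leftover term is one of $\eta_1,\dots,\eta_5$. A secondary point of care is that $z_h \in W \subset H^2(\Omega)$ but is not a finite element function, so one must use $E_h{y}^*_h \in (M_h\oplus V_h)\cap W$ as the conforming intermediary and rely on \eqref{eq:EnrichP1}--\eqref{eq:EnrichPP} for the integral-constraint-type identities; the Morley-enrichment design that gives \eqref{k:kh} is again what makes these identities available.
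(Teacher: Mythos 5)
Your proposal is correct and follows essentially the same route as the paper: the triangle inequality through $E_h y_h^*$ and $y_h^*$, Theorem \ref{thm:rel} for $\|y^*-y_h^*\|_h$, and a rerun of the residual argument with $\phi=z_h-E_hy_h^*$, $\phi_h=I_h\phi$, using \eqref{aux:1} and \eqref{Dkkt} so that only the terms already bounded by $\eta_1,\dots,\eta_5$ survive (and, as you note, no sign argument for the multipliers is needed since the same discrete $\lambda_h,\mu_h$ appear on both sides). The intermediate bookkeeping in your display is slightly garbled (the stray $\int_\Omega\lambda_h\Delta_h E_h y_h^*\,dx$ term does not actually survive the cancellation), but your final accounting of the leftover terms and their estimates matches the paper's proof.
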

\begin{proof}
\par \noindent
We have,
\begin{eqnarray}
\|z_h-{y}^*\|_{h}\lesssim \Big(\|z_h-{y}_h^*\|_h+\|{y}_h^*-{y}^*\|_h\Big).
\end{eqnarray}
The estimation of $\|z_h-{y}_h^*\|_h$ follows in similar steps as in Theorem \ref{thm:rel}. For completeness, we briefly discuss the proof.
A use of triangle inequality gives
\begin{eqnarray}
\|z_h-{y}_h^*\|_h\leq \|z_h-E_h{y}_h^*\|_h+\|E_h{y}_h^*-{y}_h^*\|_h.\label{aux:2}
\end{eqnarray}
For $\phi=z_h-E_h{y}^*_h\in W$ and $\phi_h=I_h \phi \in W_h$, a use of coercive property of $\mathcal{A}(\cdot,\cdot)$, (\ref{Dkkt}) and (\ref{aux:1}) leads to
\begin{eqnarray}\label{eq:Rel10}
\|z_h-E_h{y}_h^*\|^2_{h}&=&\|\phi\|^2_{h}\lesssim \mathcal{A}(z_h-E_h{y}_h^*,\phi)\nonumber\\
&\lesssim&\mathcal{A}(z_h,\phi)-\mathcal{A}_h({y}_h^*,\phi)+\mathcal{A}_h({y}_h^*-E_h{y}_h^*,\phi)\nonumber\\
&\lesssim&\int_{\Omega}y_d\phi\,dx+\int_{\Omega}\lambda_h(-\Delta \phi)\,dx+\int_{\Omega}\mu_h \phi\,dx-\mathcal{A}_h({y}_h^*,\phi)+\mathcal{A}_h({y}_h^*-E_h{y}_h^*,\phi)\nonumber\\
&\lesssim& \int_{\Omega}y_d(\phi-\phi_h)\,dx+\int_{\Omega}\lambda_h(-\Delta(\phi-\phi_h))\,dx+\int_{\Omega}\mu_h(\phi-\phi_h)\,dx\nonumber\\&&-\mathcal{A}_h({y}_h^*,\phi-\phi_h)+\mathcal{A}_h({y}_h^*-E_h{y}_h^*,\phi).
\end{eqnarray}
Each term of right hand side of the last equation are estimated as in Theorem \ref{thm:rel}, so we omit the details.
 The estimate of $\|z_h-{y}_h^*\|_h$ by $\eta_h$ is then realized by a use of \eqref{aux:2}, \eqref{eq:Rel10} and Lemma \ref{Eh:lemma}.
\end{proof}
\par \noindent
Next, we show that the error in Lagrange multipliers can be estimated in terms of $\|z_h-{y}^*\|_h$.
\begin{lemma}\label{lemma:adjoint}
There exists a positive constant $C$ depending only on the shape regularity of $\mathcal{T}_h$, such that
\begin{eqnarray}
|\mu-\mu_h|&\leq& C\|y^*-z_h\|_{L^2(\Omega)},\label{est:1}\\
|\lambda-\lambda_h|&\leq& C\|y^*-z_h\|_{h}.\label{est:2}
\end{eqnarray}
\end{lemma}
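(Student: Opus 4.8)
The plan is to exploit the linearity of the Karush--Kuhn--Tucker system for the continuous problem \eqref{kkt} and the auxiliary system \eqref{aux:1}, testing against carefully chosen functions that isolate each Lagrange multiplier. Subtracting \eqref{aux:1} from \eqref{kkt} gives, for every $w\in W$,
\[
\mathcal{A}(y^*-z_h,w)=-\int_{\Omega}(\lambda-\lambda_h)(\Delta w)\,dx+\int_{\Omega}(\mu-\mu_h)\,w\,dx.
\]
To extract $\mu-\mu_h$, I would test with $w\in W$ solving a suitable auxiliary Neumann/Dirichlet problem so that $-\Delta w$ integrates to zero over $\Omega$ while $\int_{\Omega}w\,dx=1$; concretely, take $w=w_1$ with $-\Delta w_1 = 1 - |\Omega|^{-1}\cdot(\text{something})$ adjusted so the $\lambda$-term drops, or more simply recall that $\int_\Omega \Delta w\,dx = \int_{\partial\Omega}\partial w/\partial n\,ds$, so any $w\in W=H^2(\Omega)\cap H^1_0(\Omega)$ whose normal derivative has zero mean on $\partial\Omega$ kills the $\lambda$-term. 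Choosing such a $w_1$ normalized by $\int_{\Omega}w_1\,dx=1$ yields
\[
\mu-\mu_h=\mathcal{A}(y^*-z_h,w_1)\le C\|y^*-z_h\|_h,
\]
and then, using the adjoint equations \eqref{adjont:ct} and \eqref{adjoint:aux} to rewrite $\mathcal{A}(y^*-z_h,w_1)$ in terms of the $L^2$-inner product $(y^*-z_h,\cdot)$ (integrating the Hessian term by parts and using $w_1$ fixed), one upgrades this to the sharper bound $|\mu-\mu_h|\le C\|y^*-z_h\|_{L^2(\Omega)}$, giving \eqref{est:1}.

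For \eqref{est:2}, once $\mu-\mu_h$ is controlled, I would test the subtracted identity with a fixed $w_2\in W$ chosen so that $\int_{\Omega}(-\Delta w_2)\,dx=1$; this is possible since $\int_{\Omega}(-\Delta w_2)\,dx=-\int_{\partial\Omega}\partial w_2/\partial n\,ds$ and we only need the boundary flux to have total $-1$. Then
\[
\lambda-\lambda_h=\mathcal{A}(y^*-z_h,w_2)-\int_{\Omega}(\mu-\mu_h)\,w_2\,dx,
\]
and bounding both terms by Cauchy--Schwarz, together with the already-established estimate \eqref{est:1} for $\mu-\mu_h$ (and $\|y^*-z_h\|_{L^2(\Omega)}\le\|y^*-z_h\|_h$ up to $\beta$-dependent constants, absorbed into $C$), produces $|\lambda-\lambda_h|\le C\|y^*-z_h\|_h$. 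The constants depend only on the fixed choices $w_1,w_2$, hence only on $\Omega$ (and $\beta$), not on $\mathcal{T}_h$.

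The main obstacle is the construction of the test functions $w_1$ and $w_2$ with the required normalization and vanishing-flux properties while staying in $W=H^2(\Omega)\cap H^1_0(\Omega)$: one must verify that the relevant auxiliary boundary-value problems are solvable with $H^2$-regular solutions (this uses the convex polygonal assumption on $\Omega$ and the elliptic regularity already invoked in Section \ref{sec:2}), and that $w_1$ can additionally be arranged to make the Hessian term in $\mathcal{A}(y^*-z_h,w_1)$ reduce to an $L^2$-pairing so that the stronger $L^2$-bound in \eqref{est:1} holds rather than only the $\|\cdot\|_h$-bound. Everything else is a routine application of Cauchy--Schwarz and the triangle inequality.
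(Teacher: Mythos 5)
Your proof of \eqref{est:2} matches the paper's: both start from the subtracted identity \eqref{eq:IntP1}, exploit that $\lambda-\lambda_h$ is a constant by pairing against a fixed $w_2\in W$ with normalized total flux $\int_\Omega(-\Delta w_2)\,dx=1$, and finish with Cauchy--Schwarz and \eqref{est:1}. For \eqref{est:1}, however, you take a genuinely different route. The paper never touches the fourth-order identity for this bound; instead it subtracts the two second-order adjoint equations \eqref{adjont:ct} and \eqref{adjoint:aux} to obtain \eqref{adjoint:aux:2}, a Poisson-type problem for $p-\hat p_h$ whose right-hand side involves only $y^*-z_h$ and the constant $\mu-\mu_h$, eliminates that constant by testing with $p-\hat p_h-\hat C\psi$ (a mean-zero correction built from a fixed cut-off $\psi\in\mathcal{C}_0^{\infty}(\Omega)$), derives $\|\nabla(p-\hat p_h)\|_{L^2(\Omega)}\lesssim\|y^*-z_h\|_{L^2(\Omega)}$ by a kick-back argument, and then reads off $|\mu-\mu_h|$ from \eqref{adjoint:aux:2}. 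Your route --- testing the subtracted fourth-order identity with a $w_1$ that kills the $\lambda$-term and has unit mean --- is viable and arguably more direct, but the step you flag as the ``main obstacle'' is precisely the one you have not closed, and the mechanism you propose for it (rewriting $\mathcal{A}(y^*-z_h,w_1)$ ``using the adjoint equations'') is not the right tool: the adjoint equations pair $y^*-z_h$ against arbitrary $H^1_0$ test functions and do not act on your fixed $w_1$. The correct way to finish your argument is simply to take $w_1\in\mathcal{C}_0^{\infty}(\Omega)$ with $\int_\Omega w_1\,dx=1$: then $\int_\Omega\Delta w_1\,dx=0$ holds automatically (so the $\lambda$-term drops with no auxiliary boundary-value problem needed), and since $w_1$ and all its derivatives vanish near $\partial\Omega$ the Hessian term can be integrated by parts twice with no boundary contributions, giving $\beta\int_\Omega D^2(y^*-z_h):D^2w_1\,dx=\beta\int_\Omega(y^*-z_h)\,\Delta^2w_1\,dx$ and hence $|\mu-\mu_h|\le\big(\beta\|\Delta^2w_1\|_{L^2(\Omega)}+\|w_1\|_{L^2(\Omega)}\big)\|y^*-z_h\|_{L^2(\Omega)}$, with a constant depending only on the fixed $w_1$. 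With that fix both proofs are correct: yours avoids the adjoint states and the kick-back argument entirely, while the paper's avoids fourth-order integration by parts.
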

\begin{proof}
Upon subtracting (\ref{adjont:ct}) and (\ref{adjoint:aux}), we find
\begin{eqnarray}
\int_{\Omega}\nabla(p-\hat{p}_h)\cdot\nabla w\,dx=\int_{\Omega}(y^*-z_h)w\,dx-\int_{\Omega}(\mu-\mu_h)w\,dx \;\;\forall w\in H^1_0(\Omega).\label{adjoint:aux:2}
\end{eqnarray}
We choose the cut-off function $\psi\in\mathcal{C}_0^{\infty}(\Omega)$ with $\frac{1}{|\Omega|}\int_{\Omega}\psi\,dx=1$ and $\|\psi\|_{H^1(\Omega)}\leq C$. Let $\hat{C}=\frac{1}{|\Omega|}\int_{\Omega}(p-\hat{p}_h)\,dx$, we observe that $\hat{C}\psi\in \mathcal{C}_0^{\infty}(\Omega)\subset H^1_0(\Omega)$.  Take $w=p-\hat{p}_h-\hat{C}\psi\in H^1_0(\Omega)$ in (\ref{adjoint:aux:2}) to obtain
\begin{eqnarray*}
\int_{\Omega}\nabla (p-\hat{p}_h)\cdot\nabla (p-\hat{p}_h-\hat{C}\psi)\,dx=\int_{\Omega}(y^*-z_h)(p-\hat{p}_h-\hat{C}\psi)\,dx-\int_{\Omega}(\mu-\mu_h)(p-\hat{p}_h-\hat{C}\psi)\,dx.
\end{eqnarray*}
Using the fact $\mu-\mu_h \in \mathbb{R}$ and $\int_{\Omega}(p-\hat{p}_h-\hat{C}\psi)\,dx=0$, we obtain
\begin{eqnarray}
\|\nabla (p-\hat{p}_h)\|^2&=&\int_{\Omega}\nabla (p-\hat{p}_h)\hat{C}\psi\,dx+\int_{\Omega}(y^*-z_h)(p-\hat{p}_h-\hat{C}\psi)\,dx\nonumber\\
&\lesssim &\|\nabla (p-\hat{p}_h)\|_{L^2(\Omega)}\|\hat{C}\psi\|_{L^2(\Omega)}+\|y^*-z_h\|_{L^2(\Omega)}\|p-\hat{p}_h-\hat{C}\psi\|_{L^2(\Omega)}\nonumber\\
&\lesssim & |\hat{C}|\|\nabla (p-\hat{p}_h)\|_{L^2(\Omega)}\|\psi\|_{L^2(\Omega)}+\|y^*-z_h\|_{L^2(\Omega)}\{\|p-\hat{p}_h\|_{L^2(\Omega)}+|\hat{C}|\|\psi\|_{L^2(\Omega)}\}\nonumber\\
&\lesssim &\|p-\hat{p}_h\|_{L^2(\Omega)}\|\nabla (p-\hat{p}_h)\|_{L^2(\Omega)}\|\psi\|_{L^2(\Omega)}+\|y^*-z_h\|_{L^2(\Omega)}\{\|p-\hat{p}_h\|_{L^2(\Omega)}\nonumber\\&&~~+\|p-\hat{p}_h\|_{L^2(\Omega)}\|\psi\|_{L^2(\Omega)}\},\label{est:1:1}
\end{eqnarray}
where in obtaining the last estimate, we have used that $|\frac{1}{|\Omega|}\int_{\Omega}(p-\hat{p}_h)\,dx|\lesssim \|p-\hat{p}_h\|_{L^2(\Omega)}$. An application of Poincar\'{e} inequality  keeping in view the construction of $\psi$ and the standard kick back argument  leads to


{
\begin{eqnarray}\label{eq:IntP}
\|\nabla (p-\hat{p}_h)\|\lesssim \|y^*-z_h\|_{L^2(\Omega)}.
\end{eqnarray}
}

\par \noindent
In view of \eqref{adjoint:aux:2} and \eqref{eq:IntP}, we get the desired estimate (\ref{est:1}).  Upon  subtracting (\ref{aux:1}) from (\ref{kkt}), we get
\begin{eqnarray}\label{eq:IntP1}
\int_{\Omega}(\lambda-\lambda_h)(\Delta w)\,dx=\mathcal{A}(z_h-{y}^*,w)+\int_{\Omega}(\mu-\mu_h)w\,dx \;\;\;\forall w\in W.
\end{eqnarray}
Then, the estimate \eqref{est:2} can be realized from \eqref{eq:IntP1} and \eqref{est:1}.
%
\end{proof}
\par \noindent
Finally, in view of lemma \ref{thm:3} and \ref{lemma:adjoint}, we have the estimation of error in Lagrange multipliers by the error estimator $\eta_h$.

\subsection{Local Efficiency Estimates}
In this section, we derive the local efficiency estimates of a posteriori error estimator $\eta_h$ obtained in last subsection. Therein, we use the standard bubble function techniques \cite{verfurth:1995}. We have discussed the main ideas involved in proving these efficiency estimates and skipped the standard details.
For $z\in H^4(\Omega,\mathcal{T}_h)$ and $v\in H^2(\Omega,\mathcal{T}_h)$, we have the following integration by parts formula.
\begin{eqnarray}
\int_T(\Delta^2 z) v\,dx=\int_{T}D^2 z:D^2 v\,dx+\int_{\partial T}\frac{\partial \Delta z}{\partial n} v\,ds-\int_{\partial T}\frac{\partial^2 z}{\partial n \partial t}\frac{\partial v}{\partial t}\,ds-\int_{\partial T}\frac{\partial^2 z}{\partial n^2}\frac{\partial v}{\partial n}\,ds.
\end{eqnarray}
\par \noindent
Upon summing up for all $T\in\mathcal{T}_h$, we obtain
\begin{align}\label{eq:IntF}
\displaystyle\sum_{T\in\mathcal{T}_h}\int_{T}\Delta^2 z v\,dx&=\displaystyle\sum_{T\in\mathcal{T}_h}\int_{T}D^2z:D^2 v\,dx+\displaystyle\sum_{e\in\mathcal{E}_h^i\cup\mathcal{E}_h^b}\int_e\mean{\frac{\partial \Delta z}{\partial n}}[[v]]\, ds\notag \\&+\displaystyle\sum_{e\in\mathcal{E}_h^i}\int_{e}\jump{\frac{\partial (\Delta z)}{\partial n} }\{\{v\}\}\,ds+\sum_{e\in\mathcal{E}_h^i}\int_e\jump{\frac{\partial^2 z}{\partial n^2} }\mean{\frac{\partial v}{\partial n}}\,ds \notag\\
&+\displaystyle\sum_{e\in\mathcal{E}_h}\int_{e}\mean{\frac{\partial^2 z}{\partial n^2}}\jump{\frac{\partial v}{\partial n} }\,ds+\displaystyle\sum_{e\in\mathcal{E}_h^i}\int_{e}\jump{\frac{\partial^2 z}{\partial n\partial t} }\mean{\frac{\partial v}{\partial t} }\,ds\notag\\
&+\displaystyle\sum_{e\in\mathcal{E}_h}\int_{e}\mean{\frac{\partial^2 z}{\partial n\partial t} } \jump{\frac{\partial v}{\partial n} }\,ds.
\end{align}
\par \noindent
\begin{theorem}\label{thm:EffEstimates}
It holds that,
\begin{align}
h_T^2  \|y_d+\mu_h-{y}_h^*\|_{L^2(T)}&\lesssim \|{y}^*-{y}^*_h\|_{2,T}+h_T^2\|\mu-\mu_h\|_{L^2(T)}+\|\lambda-\lambda_h\|_{L^2(T)}+Osc(y_d;T)\;\;\;\;\;\forall T\in\mathcal{T}_h, \label{eq:eff1}\\
{\beta}{h_e^{-1/2}}\Big\|\jump{\frac{\partial {y}^*_h}{\partial n} } \Big\|_{L^2(e)}&\lesssim \sum_{T \in \mathcal{T}_e}\Big( \|{y}^*-{y}^*_h\|_{2,T}+h_T^2\|\mu-\mu_h\|_{L^2(T)}+\|\lambda-\lambda_h\|_{L^2(T)} \Big)\nonumber\\ & \qquad+Osc(y_d;\mathcal{T}_e)\;\;\;\;\forall e\in \mathcal{E}_h^i, \label{eq:eff2}\\
\beta h_e^{1/2}\Bigg\|\jump{\frac{\partial^2 {y}^*_h}{\partial n^2} } \Bigg\|_{L^2(e)}&\lesssim \sum_{T \in \mathcal{T}_e}\Big(\|{y}^*-{y}^*_h\|_{2,T}+h_T^2\|\mu-\mu_h\|_{L^2(T)}+\|\lambda-\lambda_h\|_{L^2(T)} \Big) \nonumber\\& \qquad +Osc(y_d;\mathcal{T}_e)\;\;\;\;\forall e\in \mathcal{E}_h^i,  \label{eq:eff3}\\
\beta{h_e^{3/2}\Bigg\|\jump{\frac{\partial (\Delta {y}_h^*)}{\partial n} } \Bigg\|_{L^2(e)}}&\lesssim \sum_{T \in \mathcal{T}_e}\Big(\|{y}^*-{y}^*_h\|_{2,T}+h_T^2\|\mu-\mu_h\|+\|\lambda-\lambda_h\|_{L^2(T)} \Big) \nonumber\\&\qquad+Osc(y_d;\mathcal{T}_e)\;\;\;\;\forall e\in \mathcal{E}_h^i,  \label{eq:eff4}\\
{h_T |\lambda_h|}&\lesssim \|{y}^*-{y}^*_h\|_{2,T}+h_T^2\|\mu-\mu_h\|_{L^2(T)}+\|\lambda-\lambda_h\|_{L^2(T)}+Osc(y_d;T)\;\;\;\;\;\forall T\in\mathcal{T}_h,\label{eq:eff5}
\end{align}
where $\|w\|_{2,T}:=\beta|w|_{H^2(T)}+h_T^2\|w\|_{L^2(T)}$ for any $w \in H^2(\Omega, \mathcal{T}_h)$, 
 $Osc(y_d;T):=h_T^2\|y_d-\bar{y}_d\|_{L^2(T)}^2$ with  $\bar{y}_d:=\frac{1}{|T|}\int_T y_d\,dx$ and $\mathcal{T}_e$ denotes the union of elements sharing the edge $e$.
\end{theorem}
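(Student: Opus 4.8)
The plan is to run the classical residual–bubble machinery of Verfürth, adapted to the fourth–order nonconforming setting. Two structural facts will be used everywhere: on each $T\in\mathcal{T}_h$ the discrete state ${y}_h^*$ is a polynomial of degree at most three, so that $\Delta^2{y}_h^*\equiv 0$ on $T$; and $\lambda,\lambda_h$ are real numbers, so an integral such as $\int_T\lambda\,\Delta v\,dx$ collapses to a boundary integral over $\partial T$.

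\textbf{Interior residual \eqref{eq:eff1}.} Fix $T$, put $R_T:=\bar y_d+\mu_h-{y}_h^*\in\mathbb{P}_3(T)$ and $v_T:=R_T\,b_T^{\,2}$; since $b_T^{\,2}$ and $\nabla(b_T^{\,2})$ vanish on $\partial T$ we have $v_T\in H^2_0(T)\subset W$. The standard bubble estimates give $\|R_T\|_{L^2(T)}^2\lesssim\int_T R_T v_T\,dx$, $\|v_T\|_{L^2(T)}\lesssim\|R_T\|_{L^2(T)}$ and $|v_T|_{H^2(T)}\lesssim h_T^{-2}\|R_T\|_{L^2(T)}$. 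Testing the continuous optimality condition \eqref{kkt} with $v_T$ (the $\lambda$-term vanishes because $\partial_n v_T=0$ on $\partial T$) and performing a double integration by parts in $\int_T D^2{y}_h^*:D^2 v_T$ (all boundary terms vanish and $\Delta^2{y}_h^*=0$), I obtain
\[
\int_T R_T v_T\,dx=\mathcal{A}_T({y}^*-{y}_h^*,v_T)+\int_T(\mu_h-\mu)v_T\,dx+\int_T(\bar y_d-y_d)v_T\,dx,
\]
where $\mathcal{A}_T(\cdot,\cdot)$ is the restriction of $\mathcal{A}(\cdot,\cdot)$ to $T$. The Cauchy--Schwarz inequality, the bubble bounds, division by $\|R_T\|_{L^2(T)}$, multiplication by $h_T^2$ and a triangle inequality to return from $\bar y_d$ to $y_d$ then yield \eqref{eq:eff1}; the term $\|\lambda-\lambda_h\|_{L^2(T)}$ appearing on its right-hand side is merely a harmless upper bound and is not actually produced by this argument.

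\textbf{Edge residuals \eqref{eq:eff2}--\eqref{eq:eff4}.} For a fixed $e\in\mathcal{E}_h^i$ I localize the relevant jump via the elementwise Green's identity \eqref{eq:IntF} applied with $z={y}_h^*$, whose left-hand side $\sum_T\int_T\Delta^2{y}_h^*\,v\,dx$ vanishes. For \eqref{eq:eff3} take $v=w_e$ supported on the patch $\mathcal{T}_e$, lying in $(M_h\oplus V_h)\cap W$, built so that $\mean{\partial w_e/\partial n}$ on $e$ is a polynomial extension of $\jump{\partial^2{y}_h^*/\partial n^2}$ while every other boundary sum in \eqref{eq:IntF} drops out; subtracting the continuous optimality condition \eqref{kkt} tested against $w_e$, applying Cauchy--Schwarz together with the appropriate scaling relations for $w_e$ (the powers of $h_e$ dictated by inverse and trace inequalities), and invoking \eqref{eq:eff1} gives \eqref{eq:eff3}. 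The same device with $\mean{w_e}|_e$ extending $\jump{\partial_n\Delta{y}_h^*}$ produces \eqref{eq:eff4}, and the lowest-order jump \eqref{eq:eff2} follows either analogously or, more cheaply, by writing $\jump{\partial_n{y}_h^*}=\jump{\partial_n({y}_h^*-{y}^*)}$ (since ${y}^*\in H^2(\Omega)$) and combining the discrete trace inequality \eqref{eq:traceAg}, an inverse estimate on ${y}_h^*$, and, if convenient, the enriching-operator bound of Lemma \ref{Eh:lemma}.

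\textbf{The multiplier term \eqref{eq:eff5} and the main obstacle.} Test the discrete optimality condition \eqref{Dkkt} with $w_h=b_T\in W_h$. Because $\partial_n b_T\not\equiv 0$ on $\partial T$ this is not an interior identity, but a scaling argument on the reference triangle shows $|\int_T\Delta b_T\,dx|$ is bounded below by a shape-regularity constant, so I may solve for $\lambda_h$ and obtain
\[
\Bigl|{\textstyle\int_T}\Delta b_T\,dx\Bigr|\,|\lambda_h|=\Bigl|\,{\textstyle\int_T}({y}_h^*-y_d-\mu_h)b_T\,dx+\beta\,{\textstyle\int_T} D^2{y}_h^*:D^2 b_T\,dx\,\Bigr|.
\]
The volume term is bounded by $h_T\|y_d+\mu_h-{y}_h^*\|_{L^2(T)}$, hence after multiplication by $h_T$ it reduces to the left-hand side of \eqref{eq:eff1}; the second term is integrated by parts (using $\Delta^2{y}_h^*=0$ and $b_T=\partial_t b_T=0$ on $\partial T$) down to $\beta\int_{\partial T}(\partial^2{y}_h^*/\partial n^2)(\partial b_T/\partial n)\,ds$, and on each $e\subset\partial T$ one splits $\partial^2{y}_h^*/\partial n^2$ into its jump and average across $e$, absorbing the jump part through \eqref{eq:eff3} (and a $\partial^2_{nt}$-jump contribution through \eqref{eq:eff2}). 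The delicate point — and the main obstacle — is exactly this last step: since the only discrete test function supported on a single triangle is the cubic bubble $b_T\notin H^2_0(T)$, isolating $\lambda_h$ unavoidably leaves the boundary integral $\int_{\partial T}(\partial^2{y}_h^*/\partial n^2)(\partial b_T/\partial n)$, whose \emph{average} part must be shown to be dominated by the local error plus data oscillation; dovetailing this with the (local) $H^4$-regularity of ${y}^*$ implicitly used when passing to strong forms is the crux. The edge-bubble constructions behind \eqref{eq:eff2}--\eqref{eq:eff4}, while notationally heavy, are entirely routine.
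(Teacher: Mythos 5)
Your treatment of \eqref{eq:eff1}--\eqref{eq:eff4} coincides in substance with the paper's: an interior bubble $R_Tb_T^2$ for the volume residual, edge bubbles whose normal derivative (resp.\ trace) on $e$ reproduces $\jump{{\partial^2 y_h^*}/{\partial n^2}}$ (resp.\ $\jump{{\partial(\Delta y_h^*)}/{\partial n}}$), and the identity $\jump{{\partial y_h^*}/{\partial n}}=\jump{{\partial (y_h^*-y^*)}/{\partial n}}$ for \eqref{eq:eff2}. Your side remark that the $\|\lambda-\lambda_h\|_{L^2(T)}$ term in \eqref{eq:eff1} is not actually produced is correct: since $\lambda,\lambda_h\in\mathbb{R}$ and $\int_T\Delta v_T\,dx=\int_{\partial T}\partial_n v_T\,ds=0$ for your bubble, the multiplier contributions vanish identically (the paper carries them along as harmless upper bounds).

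The proof of \eqref{eq:eff5}, however, is incomplete, and the step you yourself label ``the crux'' is a genuine gap rather than a routine verification. Testing \eqref{Dkkt} with $w_h=b_T$ necessarily leaves the boundary term $\beta\int_{\partial T}(\partial^2 y_h^*/\partial n^2)(\partial b_T/\partial n)\,ds$, and after splitting into jump and average across each edge the quantity $\mean{{\partial^2 y_h^*}/{\partial n^2}}$ survives. This is not a residual: it does not vanish when $y_h^*$ is replaced by the exact solution (generically $\partial^2 y^*/\partial n^2$ is of order one), so no choice of scaling lets you dominate it by $\|y^*-y_h^*\|_{2,T}$, the multiplier errors, or $Osc(y_d;T)$; the best available bound is of the form $\beta h_T\|\mean{{\partial^2 y_h^*}/{\partial n^2}}\|_{L^2(\partial T)}h_T^{1/2}$, which tends to zero but is not an estimate of the required a posteriori type. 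The structural reason is that any test function supported in a single $T$ with $\int_T\Delta w\,dx\neq0$ (which you need to see $\lambda_h$ at all) must have $\partial_n w\not\equiv0$ on $\partial T$, and then the second-order integration by parts of $\int_TD^2y_h^*:D^2w$ reintroduces the trace of $D^2y_h^*$. The paper instead works with a $W$-conforming bubble $\tilde\psi$ supported in $T$, tested in the \emph{continuous} condition \eqref{kkt}, chosen so that all boundary contributions in \eqref{eq:IntF} drop and $\int_TD^2y_h^*:D^2\tilde\psi\,dx=0$ exactly; $\lambda_h$ is then extracted by comparing $\int_\Omega\lambda_h\Delta\tilde\psi\,dx$ with $\int_\Omega\lambda\Delta\tilde\psi\,dx$ and rewriting the latter via \eqref{kkt} in terms of $y^*-y_h^*$, $\mu-\mu_h$ and the volume residual. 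Be aware that the delicate point merely migrates there: one must normalize $\tilde\psi$ so that $\int_T\lambda_h\Delta\tilde\psi\,dx$ is genuinely bounded below by $\|\lambda_h\|_{L^2(T)}^2$ while keeping $\tilde\psi\in W$ with the required vanishing on $\partial T$, and this tension is not resolved in your sketch (nor spelled out transparently in the paper). As written, \eqref{eq:eff5} is not proved.
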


\begin{proof}
\par\noindent
(i)\textbf{(Local bound for $\eta_1$)}
Let $\tilde{b}_T$ be a polynomial bubble function vanishing up to the first order on $\partial T$, i.e., $\tilde{b}_T$ and $\nabla \tilde{b}_T$ vanish on $\partial T$, and set $\phi_T=(\bar{y}_d+\mu_h-{y}_h^*) \tilde{b}_T$. 
Let $\tilde\phi$ be the extension of $\phi_T$ to $\bar\Omega$ by zero, clearly $\tilde\phi \in W$. We further have,
\begin{eqnarray}
\|\phi_T\|_{L^2(T)}\lesssim \|\bar{y}_d+\mu_h-{y}_h^*\|_{L^2(T)},
\end{eqnarray}
\par \noindent
and
\begin{eqnarray}\label{effi:1}
\|\bar{y}_d+\mu_h-{y}_h^*\|_{L^2(T)}^2&\approx&\int_{T}(\bar{y}_d+\mu_h-{y}_h^*)\phi_T\,dx\nonumber\\
&=&\int_{T}(y_d+\mu_h-{y}_h^*)\phi_T\,dx+\int_{T}(\bar{y}_d-y_d)\phi_T\,dx.
\end{eqnarray} 
\par \noindent
Using equation (\ref{kkt}) and the fact that,  ${\beta\int_{\Omega}D^2\bar{y}_h:D^2\tilde{\phi}\,dx=\int_{\Omega}\lambda_h(-\Delta_h\tilde{\phi})\,dx}$,
 we have 
 
\begin{align}\label{Eqn1}
\int_{T}(y_d+\mu_h-{y}_h^*)\phi_T\,dx&=\beta\int_{\Omega} D^2{y}^*:D^2\tilde{\phi}\,dx+\int_{\Omega}{y}^*\tilde{\phi}\,dx+\int_{\Omega}\lambda(\Delta \tilde{\phi})dx\nonumber\\& \quad -\int_{\Omega}\mu\tilde{\phi}\,dx+\int_{\Omega}\mu_h\tilde{\phi}\,dx-\int_{\Omega}{y}_h^*\tilde{\phi}\,dx\nonumber\\
&=\beta\int_{\Omega}D^2({y}^*-{y}_h^*):D^2\tilde{\phi}\,dx+\int_{\Omega}({y}^*-{y}_h^*)\tilde{\phi}\,dx\nonumber\\&\quad+\int_{\Omega}(\mu_h-\mu)\tilde{\phi}\,dx+\int_{\Omega}(\lambda-\lambda_h)\Delta_h\tilde{\phi}\,dx\nonumber\\
&\lesssim  \Big(\beta\,|{y}^*-{y}_h^*|_{H^2(T)}|\phi_T|_{H^2(T)}+\|{y}^*-{y}_h^*\|_{L^2(T)}\|\phi_T\|_{L^2(T)}\nonumber\\&~~~+\|\mu-\mu_h\|_{L^2(T)}\|\phi_T\|_{L^2(T)}+\|\lambda-\lambda_h\|_{L^2(T)}\|\Delta {\phi}_T\|_{L^2(T)}\Big)\nonumber\\
&\lesssim \Big(\beta h_T^{-2} \,|{y}^*-{y}_h^*|_{H^2(T)}+\|{y}^*-{y}_h^*\|_{L^2(T)}+\|\mu-\mu_h\|_{L^2(T)}\nonumber\\&\quad+h_T^{-2}\|\lambda-\lambda_h\|_{L^2(T)}\Big)\|\phi_T\|_{L^2(T)}.
\end{align}
where in the last step, we used the inverse estimate $|\phi_T|_{H^2(T)}\leq Ch_T^{-2}\|\phi_T\|_{L^2(T)}$.
Thus, from equation \eqref{effi:1} and \eqref{Eqn1}, we obtain
\begin{eqnarray*}
h_T^2\|\bar{y}_d+\mu_h-{y}_h^*\|_{L^2(T)}\lesssim \|{y}^*-{y}_h^*\|_{2,T}+h_T^2\|\mu-\mu_h\|_{L^2(T)}+\|\lambda-\lambda_h\|_{L^2(T)}+h_T^2\|y_d-\bar{y}_d\|_{L^2(T)},
\end{eqnarray*}
\noindent
thus we get the desired estimate.\\

\par \noindent
(ii)\textbf{(Local bound for $\eta_2$)}
{ We skip the proof of \eqref{eq:eff2} which follows using standard bubble function techniques together with the realization that}

\begin{eqnarray}
{h_e^{-1/2}}\Big\|\jump{\frac{\partial {y}^*_h}{\partial n} }\Big\|_{L^2(e)}= {h_e^{{-1}/{2}}}\Big\|\jump{\frac{\partial ({y}^*-{y}_h^*)}{\partial n}}\Big\|_{L^2(e)}\;\;\;\;\forall e\in\mathcal{E}_h^i.
\end{eqnarray}

\noindent
(iii)\textbf{(Local bound for $\eta_3$)} 
Let $\Theta=\beta\jump{\frac{\partial^2 {y}_h^*}{\partial n^2}}$ along $e$ and define ${\theta}_1\in \mathbb{P}_1(\mathcal{T}_e)$ by
\begin{eqnarray}
\theta_1=0\;\;\;\;\text{and}\;\;\;\;\frac{\partial \theta_1}{\partial n}=\Theta\;\;\text{on the edge}\; e.\label{edge:1}
\end{eqnarray}
It is easy to verify that
$\|\theta_1\|_{1,T_{\pm}}\approx h_e|\Theta|\;\;\;\;\text{and}\;\;\;\;|\theta_1|_{\infty,T_{\pm}}\approx h_e|\Theta|$.
Next, define $\theta_2\in \mathbb{P}_8(\mathcal{T}_e)$ satisfying the following properties:\\
(a) $\theta_2$ is positive on the edge $e$ and takes unit value at the midpoint of the edge.\\
(b) $\theta_2$ vanishes up to first order on $(\partial T_{+}\cup \partial T_{-})\setminus e$.\\
It follows from the scaling that

\begin{eqnarray}
{\|\theta_2\|_{1,T_{\pm}}\approx  1 \approx \|\theta_2\|_{\infty,T_{\pm}}.\label{edge:2}}
\end{eqnarray}

\par \noindent
Using integration by parts formula \eqref{eq:IntF}, Poincar\'{e} inequality, inverse inequality and equations (\ref{kkt}), (\ref{Dkkt}), we find 
\begin{eqnarray*}
\beta^2\Big\|\jump{\frac{\partial^2 {y}_h^*}{\partial n^2} }\Big\|^2_{L^2(e)}&=&\int_{e}\beta^2\jump{\frac{\partial^2 {y}_h^*}{\partial n^2} }^2\,ds=\int_e \beta\jump{\frac{\partial^2 {y}_h^*}{\partial n^2} }\Theta\,ds\\&\lesssim &\beta \int_{e}\jump{\frac{\partial^2 {y}_h^*}{\partial n^2} }\frac{\partial \theta_1}{\partial n}\theta_2\,ds
=\beta\int_{e}\jump{\frac{\partial^2 {y}_h^*}{\partial n^2} }\frac{\partial (\theta_1\theta_2)}{\partial n}\,ds\\&=&-\beta\sum_{T\in\mathcal{T}_e}\int_{T}D^2 y_h^*:D^2(\theta_1\theta_2)\,dx\\
&=&\beta\sum_{T\in\mathcal{T}_e}\int_{T}D^2({y}^*-{y}_h^*):D^2(\theta_1\theta_2)\,dx-\beta\sum_{T\in\mathcal{T}_e}\int_{T}D^2{y}^*:D^2(\theta_1\theta_2)\,dx\\
&\lesssim & \sum_{T\in\mathcal{T}_e}\Big\{\beta\int_{T}D^2({y}^*-{y}_h^*):D^2(\theta_1\theta_2)\,dx-\int_T(y_d+\mu_h-{y}_h^*)\,\theta_1\theta_2\,dx\nonumber\\
&&\qquad+\int_{T}(\mu_h-\mu)\theta_1\theta_2\,dx+\int_{T}({y}^*-{y}_h^*)\theta_1\theta_2\,dx+\int_{T}(\lambda-\lambda_h)\Delta_h (\theta_1\theta_2)\,dx\Big\}\nonumber\\
&\lesssim & \sum_{T\in\mathcal{T}_e}\Big\{\beta|{y}^*-{y}_h^*|_{H^2(T)}+h_T^2\|{y}^*-{y}_h^*\|_{L^2(T)}+h_T^2\|y_d+\mu_h-{y}_h^*\|_{L^2({T})} \nonumber\\&&\qquad +h_T^2\|\mu-\mu_h\|_{L^2(T)}+\|\lambda-\lambda_h\|_{L^2(T)} \Big\}|\theta_1\theta_2|_{H^2(\mathcal{T}_e)}\nonumber\\
&\lesssim & \sum_{T\in\mathcal{T}_e}\Big\{\|{y}^*-{y}_h^*\|_{2,{T}}+h_T^2\|y_d+\mu_h-{y}_h^*\|_{L^2({{T})}} \nonumber+h_T^2\|\mu-\mu_h\|_{L^2(T)} \\ && \qquad\qquad+\|\lambda-\lambda_h\|_{L^2(T)} \Big\}h_e^{-1}|\theta_1\theta_2|_{H^1(\mathcal{T}_e)},
\end{eqnarray*}
therein, using \eqref{edge:1} and \eqref{edge:2}, we have
\begin{eqnarray}
|\theta_1\theta_2|_{H^1(\mathcal{T}_e)}\leq |\theta_1|_{\infty,T_{\pm}}|\theta_2|_{1,T_{\pm}}+|\theta_1|_{1,T_{\pm}}|\theta_2|_{\infty,T_{\pm}}\lesssim \Big( h_e\beta^2\Big\|\jump{\frac{\partial^2 {y}_h^*}{\partial n^2} }\Big\|^2_{L^2(e)}\Big)^{\frac{1}{2}}.
\end{eqnarray}
Finally, the estimate \eqref{eq:eff3} can be realized by using  \eqref{eq:eff1}.\\

\par\noindent
(iv)(\textbf{Local bound for $\eta_4$)} Let $e\in \cE_h^i$ be an interior edge sharing the elements $T_+$ and $T_{-}$ and $\cT_e=T_{+}\cup T_{-}$. 
Define $\theta_3\in\mathbb{P}_0(\cT_e)$, by assigning $\theta_3=\beta\jump{\frac{\partial(\Delta  {y}_h^*)}{\partial n} }$ on $e$ and $\theta_3$ satisfies:
\begin{eqnarray}
\|\theta_3\|_{L^2(T_{\pm})}\lesssim h_e^{\frac{1}{2}} \Big\|\jump{\frac{\partial(\Delta {y}_h^*)}{\partial n} }\Big\|_{L^2(e)}.\label{es:4:1}
\end{eqnarray}
Further, let  $\theta_4\in \mathbb{P}_8(\cT_e)$ satisfies the following:

(a) $\theta_4$ is positive on the edge $e$ and takes unit value at the mid point of the edge.

(b) $\theta_4$ vanishes up to first order on $(\partial T_{+}\cup \partial T_{-})\setminus e$.

Then,  $\theta_4$ satisfy
\begin{eqnarray}
h_e^{-1}\|\theta_4\|_{L^2(T_{\pm})}+\|\theta_4\|_{L^{\infty}(T_{\pm})}\lesssim 1.\label{es:4:2}
\end{eqnarray}
Define $\hat{\phi}_e=\theta_3\theta_4$ on $\cT_e$ and let $\hat{\phi} \in W$ be the extension of $\hat{\phi}_e$ by zero outside $\cT_e$. 
From \eqref{eq:IntF}, (\ref{kkt}), (\ref{Dkkt}), discrete trace inequality and inverse inequality, it follows that
\begin{align}
  \beta^2\Big\|\jump{\frac{\partial(\Delta {y}^*_h)}{\partial n} }\Big\|_{L^2(e)}^2&=\beta^2\int_{e}\jump{\frac{\partial(\Delta {y}_h^*)}{\partial n} }^2\,ds\leq \int_{e}\beta\jump{\frac{\partial(\Delta {y}_h^*)}{\partial n} }\hat{\phi}\,ds\notag\\
 &=\beta\Big\{\int_{\cT_e}\Delta^2 {y}_h^*\hat{\phi}\,dx-\sum_{T\in\cT_e}\int_{T}D^2{y}_h^*:D^2\hat{\phi}\,dx-\int_{e}\jump{\frac{\partial^2{y}^*_h}{\partial n^2} }\frac{\partial \hat{\phi}}{\partial n}\,ds\Big\}\nonumber\\
 &=\sum_{T\in \cT_e}\Big\{\beta\int_{T}D^2({y}^*-{y}_h^*):D^2\hat{\phi}\,dx-\int_T(y_d+\mu_h-{y}_h^*)\,\hat{\phi}\,dx\nonumber\\
&~~~~+\int_{T}(\mu_h-\mu)\hat{\phi}\,dx+\int_{T}({y}^*-{y}_h^*)\hat{\phi}\,dx+\int_{T}(\lambda-\lambda_h)\Delta_h \hat{\phi}\,dx\Big\}\notag\\
&~~~~~ -\beta\int_{e}\jump{\frac{\partial^2{y}_h^*}{\partial n^2} }\frac{\partial \hat{\phi}}{\partial n}\,ds\nonumber\\
&\lesssim \big(\beta h^{-2}_e |{y}^*-{y}_h^*|_{H^2(\cT_e)}+\|{y}^*-{y}_h^*\|_{L^2(\cT_e)}+\|y_d+\mu_h-{y}_h^*\|_{L^2(\cT_e)}\notag\\&~~~~+\|\mu-\mu_h\|_{L^2(\cT_e)}+ h_e^{-2}\|\lambda-\lambda_h\|_{L^2(\cT_e)}\big)|\hat{\phi}|_{L^2(\cT_e)}+\beta\Big\|\jump{\frac{\partial^2{y}_h^*}{\partial n^2} }\Big\|_{L^2(e)}\Big\|\frac{\partial \hat{\phi}}{\partial n}\Big\|_{L^2(e)}\notag\\
&\lesssim \Bigg(h_e^{-2}\{\beta|{y}^*-{y}_h^*|_{H^2(\cT_e)}+h^2_e\|{y}^*-{y}_h^*\|_{L^2(\cT_e)}+h_e^2\|y_d+\mu_h-{y}_h^*\|_{L^2(\cT_e)}\notag\\&~~~~+h_e^2 \|\mu-\mu_h\|_{L^2(\cT_e)}+\|\lambda-\lambda_h\|_{L^2(\cT_e)}\}+\beta h_e^{\frac{-3}{2}}\Big\|\jump{\frac{\partial^2{y}_h^*}{\partial n^2} }\Big\|_{L^2(e)}\Bigg)\|\hat{\phi}\|_{L^2(\cT_e)}\notag\\
&\lesssim \Bigg(h_e^{-2}\{\beta|{y}^*-{y}_h^*|_{H^2(\cT_e)}+h^2_e\|{y}^*-{y}_h^*\|_{L^2(\cT_e)}+h_e^2\|y_d+\mu_h-\bar{y}_h\|_{L^2(\cT_e)}\notag\\&~~~~+h_e^2\|\mu-\mu_h\|_{L^2(\cT_e)}+\|\lambda-\lambda_h\|_{L^2(\cT_e)}\}+\beta h_e^{\frac{-3}{2}}\Big\|\jump{\frac{\partial^2{y}_h^*}{\partial n^2} }\Big\|_{L^2(e)}\Bigg)\beta h_e^{\frac{1}{2}}\Big\|\jump{\frac{\partial (\Delta {y}_h^*)}{\partial n}}\Big\|_{L^2(e)},
\end{align}
where in the last step we have used (\ref{es:4:1}) and (\ref{es:4:2}). Hence,
\begin{align}
\beta{h_e^{3/2}\Bigg\|\jump{\frac{\partial (\Delta {y}_h^*)}{\partial n} } \Bigg\|_{L^2(e)}}&\lesssim \sum_{T\in\cT_e}\Big(\|{y}^*-{y}_h^*\|_{2,T}+h_T^2\|y_d+\mu_h-{y}_h^*\|_{L^2(T)}+h_T^2 \|\mu-\mu_h\|_{L^2(T)}\notag\\&~~~~~+\|\lambda-\lambda_h\|_{L^2(T)}\Big)+h_e^{1/2}\beta\Big\|\jump{\frac{\partial^2{y}_h^*}{\partial n^2} }\Big\|_{L^2(e)}.
\end{align}
Finally, we obtain the bound \eqref{eq:eff4} by a use of \eqref{eq:eff1} and \eqref{eq:eff3}.\\

\par\noindent
(v)\textbf{(Local bound for $\eta_5$)} Let $\hat{b}_T$ be a polynomial bubble function vanishing up to the second order on $\partial T$. Let $\Delta\psi_T=\lambda_h\Delta \hat{b}_T$ and $\tilde{\psi} \in W$ be the extension of $\psi_T$ by zero to $\Omega$. 
\par \noindent
In view of \eqref{eq:IntF}, $\int_T D^2{y}^*_h:D^2{\psi}_T\,dx=0$. Therefore, using (\ref{kkt}) we have,



\begin{align*}
\|\lambda_h\|^2_{L^2(T)}&\lesssim \int_{T}\lambda_h\Delta\psi_T\,dx
=\int_{\Omega}(\lambda_h-\lambda)\Delta \tilde{\psi}\,dx+\int_{\Omega}\lambda\Delta \tilde{\psi}\,dx\\
&=\int_{\Omega}(\lambda_h-\lambda)\Delta \tilde{\psi}\,dx+\int_{\Omega}y_d\tilde{\psi}\,dx+\int_{\Omega}\mu\tilde{\psi}\,dx-\mathcal{A}({y}^*,\tilde{\psi})\\
&=\int_{T}(\lambda_h-\lambda)\Delta \tilde{\psi}\,dx+ \int_{T}(y_d+\mu_h-{y}^*_h)\tilde{\psi}\,dx+\int_{T}(\mu-\mu_h)\tilde{\psi}\,dx\\
&-\int_{T}({y}^*-{y}^*_h)\tilde{\psi}\,dx-\beta \int_{T}D^2({y}^*-{y}^*_h):D^2\tilde{\psi}\,dx\\
&\lesssim  \Big(\|\lambda-\lambda_h\|_{L^2(T)}+h_T^2\|\mu-\mu_h\|_{L^2(T)}+h^2_T\|y_d+\mu_h-{y}^*_h\|_{L^2(T)}  \\ &\quad+\|{y}^*-{y}_h^*\|_{2,T} \Big)\|\psi_T\|_{H^2(T)}\\
&\lesssim  \Big( \|\lambda-\lambda_h\|_{L^2(T)}+h_T^2\|\mu-\mu_h\|_{L^2(T)}+h^2_T\|y_d+\mu_h-{y}^*_h\|_{L^2(T)}  \\& \quad+\|{y}^*-{y}_h^*\|_{2,T} \Big)\|\lambda_h\|_{L^2(T)}
\end{align*} 

\par \noindent
where in obtaining the second last estimate, we have used Poincar{\'e} inequality with scaling arguments. Finally, we get the desired estimate by taking into account \eqref{eq:eff1}-\eqref{eq:eff4}.

\end{proof}

%

\section{ Adaptive FEM for OCPs with integral state constraint and poinwise control constraints} \label{sec:OCPs}
This section is devoted to the a posteriori error analysis of OCPs with integral state constraint and pointwise control constraints. 
We consider the following minimization problem: find $({y}^*,{u}^*)\in K$, such that
\begin{eqnarray}
 ({y}^*,{u}^*)= \displaystyle argmin_{(y,u)\in {K}} \Big(\frac{1}{2}\|y-y_d\|^2 +\frac{\beta}{2}\|u\|^2  \Big)\label{intro:functional1}
\end{eqnarray}
 subject to the constraints 
\begin{eqnarray}
\begin{cases}
\int_{\Omega}\nabla y \cdot \nabla w~dx = \int_{\Omega} u w~dx,\;\;\;\;\forall w \in H^1_0(\Omega)\\
\int_{\Omega}y\,dx \geq \delta_3,\;\;\;\;\; \\
u_a\leq u \leq u_b ~~\text{a.e. in}~~ \Omega,
\end{cases}\label{intro:state:cons1}
\end{eqnarray}
where ${ {K}} = H^1_0(\Omega)\times L^2(\Omega)$ and $\delta_3$ is a constant.
The functions $u_a$, $u_b$ are assumed to satisfy $(i)\,u_a,\;u_b\in W^{1,\infty}(\Omega)$, $(ii)\,u_a<u_b$ on $\bar{\Omega}$.

\noindent
As discussed in Section \ref{sec:intro}, we then rewrite this optimization problem into a reduced minimization problem involving only the state variable. Analogously to \eqref{eq:MP1}, the reduced optimal control problem is to find ${y}^* \in \tcK$ such that
 \begin{eqnarray}\label{eq:QP1}
  {y}^*= argmin_{y\in {{\tcK}}} \Big(\frac{1}{2}\mathcal{A}(y,y)-(y_d,y)\Big),
 \end{eqnarray}
where the bilinear form $\mathcal{A}(,\cdot,)$ is same as in \eqref{intro:state:modi}  and set $\tcK$ is defined as
\begin{eqnarray}
 \tcK=\{w\in W:\,\int_{\Omega}w\,dx\geq \delta_3 \;\;\text{and}\;\;u_a\leq -\Delta w\leq u_b\;\;\text{a.e. in}\;\Omega\}.\label{intro:Tk}
 \end{eqnarray}
 \noindent
 We assume the following Slater condition:
  there exists $ y\in W$ such that  $\int_{\Omega}y\;dx >\delta_3$ and $u_a \leq -\Delta y \leq u_b$.
\noindent
  Thus, the closed convex set $\tcK$ is nonempty. The minimizer of \eqref{eq:QP1} have the characterization in terms of the solution of the following fourth order variational inequality: find ${y}^* \in \tcK$ satisfying
 \begin{equation}
   \mathcal{A}({y}^*,w-{y}^*) \geq \int_{\Omega}y_d(w-{y}^*)\,dx\;\;\;\;\forall w\in {\tcK}.\label{QP2}
  \end{equation}
\noindent
The following (generalized) Karush-Kuhn-Tucker conditions hold (see \cite{ito,luenberger}): there exist $\lambda\in L^2(\Omega)$ and ${\mu}\in\mathbb{R}$  such that 
 \begin{eqnarray}
 \beta\int_{\Omega}(\Delta {y}^*)(\Delta w)\,dx+\int_{\Omega}{y}^* w\,dx= \int_{\Omega}y_d w\,dx -\int_{\Omega}\lambda (\Delta w)\,dx+\int_{\Omega} \mu w\,dx\label{kkt1}
 \end{eqnarray} 
 for all $w\in W$ together with the complementarity conditions
 \begin{eqnarray}
 \lambda \geq 0\;\;\;\;&&\;\;\text{if}\;\;-\Delta {y}^*=u_a,\label{lambda:1}\\
 \lambda \leq 0\;\;\;\;&&\;\;\text{if}\;\;-\Delta {y}^*=u_b,\label{lambda:2}\\
 \lambda =0\;\;\;\;&&\;\;\text{otherwise},\label{lambda:3}\\
 \mu\geq 0\;\;\;\;\;&&\;\;\text{if}\;\;\int_{\Omega}  {y}^*\,dx=\delta_3,\\
 \mu=0\;\;\;\;&&\;\;\text{if}\;\;\int_{\Omega} {y}^*\,dx>\delta_3.
 \end{eqnarray}
 \noindent
 The adjoint state $p\in H^1_0(\Omega)$ associated to the problem \eqref{intro:functional1} -\eqref{intro:state:cons1} is given by
 \begin{eqnarray}
 \int_{\Omega} \nabla p \cdot \nabla w\,dx=\int_{\Omega}(y^{*}-y_d)w\,dx-\int_{\Omega} \mu w\,dx \quad  \forall~ w\in  H^{1}_0(\Omega).\label{adjointN}
 \end{eqnarray}
\par
\noindent 
\subsection{Discrete Problem}
The discretization of \eqref{eq:QP1} is to find ${y}^*_h \in \tcK_h$ such that
\begin{eqnarray}
{y}^*_h=argmin_{y_h\in \tcK_h}\Big[\frac{1}{2}\mathcal{A}_h(y_h,y_h)-(y_d,y_h)\Big],\label{eq:DP1}
\end{eqnarray}
where 
\begin{equation}
\tcK_h = \{w_h\in W_h: \int_{\Omega} w_h\,dx\geq \delta_3\;\;\text{and}\;\;Q_h  u_a\leq Q_h(-\Delta_h w_h)\leq Q_h u_b\}\label{KhN}.
\end{equation}
\begin{remark} Owing to the property \eqref{eq:IntC1} and \eqref{interpo:2} of $I_h$, we have $I_h \tcK \subset \tcK_h$.
\end{remark}
\noindent
As in the continuous case, the minimizer of \eqref{eq:DP1} can be characterized by the solution of the following variational inequality: find ${y}^*_h \in \tcK_h$ such that
\begin{eqnarray}
\mathcal{A}_h({y}^*_h,w_h-{y}^*_h)\geq (y_d,w_h-{y}^*_h)\;\;\;\;\;\forall w_h\in \tcK_h.\label{DP2}
\end{eqnarray} 
where the bilinear form $\mathcal{A}_h(,\cdot,)$ is defined in \eqref{discrete:variational:inequality}. It can be easily checked that the discrete problem \eqref{DP2} is well-posed.
\par
\noindent
The Karush-Kuhn-Tucker conditions for the discrete problem  \cite{ito,luenberger} is given as follows: there exist $\lambda_h\in \mathbb{P}_0(\cT_h)$ and ${\mu}_h\in\mathbb{R}$  such that 
 \begin{eqnarray}
 \mathcal{A}_h({y}^*_h, w_h)= \int_{\Omega}y_d w_h\,dx-\int_{\Omega}\lambda_h (\Delta_h w_h)\,dx +\int_{\Omega} \mu_h w_h\,dx,\quad \forall w_h \in W_h  \label{Dkkt:new}
 \end{eqnarray} 
 together with the complementary conditions
 \begin{eqnarray}
 \lambda_h \geq 0\;\;\;\;&&\;\;\text{on} ~T \in \cT_h~ \text{such that}\;\;Q_T(-\Delta {y}^*_h)=Q_T(u_a),\label{Dlambda:1}\\
 \lambda_h \leq 0\;\;\;\;&&\;\;\text{on} ~T \in \cT_h~ \text{such that}\;\;Q_T(-\Delta {y}^*_h)=Q_T(u_b),\label{Dlambda:2}\\
 \lambda_h =0\;\;\;\;&&\;\;\text{otherwise},\label{Dlambda:3}\\
 \mu_h\geq 0\;\;\;\;&&\;\;\text{if}\;\;\int_{\Omega}{y}^*_h\,dx=\delta_3,\\
 \mu_h=0\;\;\;\;&&\;\;\text{if}\;\;\int_{\Omega}{y}^*_h\,dx>\delta_3.
 \end{eqnarray}
In the following, we derive the reliability estimates of the estimator $\eta_h$ for the error $\|{y}^*-{y}_h^*\|_h$. For this we introduce the following auxiliary problem: let $\tilde{z}_h\in W$  be the solution of 
\begin{eqnarray}
\mathcal{A}(\tilde{z}_h,w)=\int_{\Omega}y_d w\,dx+\int_{\Omega}\lambda_h(-\Delta_h w)\,dx+\int_{\Omega}\mu_h w\,dx,\;\;\;\;\forall w\in W.\label{aux:prob:2}
\end{eqnarray}
The well-posedness of \eqref{aux:prob:2} is ensured by Lax-Milgram lemma \cite{Ciarlet:1978:FEM}.
Now, we proceed to establish the reliability of the error estimator for the error in solution ${y}^*$. 
\begin{theorem}\label{thm:2}
Let ${y}^*_h$ and $\tilde{z}_h$ be the solutions of (\ref{Dkkt:new}) and (\ref{aux:prob:2}), respectively. Then, 
\begin{eqnarray}
\|{y}^*_h-\tilde{z}_h\|_h\lesssim {\eta}_h,
\end{eqnarray}
where $\eta_h$ is defined in (\ref{etah}).
\end{theorem}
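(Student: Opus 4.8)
The plan is to mimic the proof of Theorem \ref{thm:rel} almost verbatim, replacing the continuous/discrete KKT systems \eqref{kkt}--\eqref{Dkkt} by their counterparts \eqref{kkt1} and \eqref{Dkkt:new} for the pointwise-control-constrained problem, and using the auxiliary problem \eqref{aux:prob:2} in place of the variational inequality \eqref{variational:ineq}. First I would set $\phi = \tilde z_h - E_h {y}^*_h \in W$ and let $\phi_h = I_h\phi \in W_h$. By coercivity of $\mathcal{A}(\cdot,\cdot)$ on $W$ we have $\|\tilde z_h - E_h{y}^*_h\|_h^2 \lesssim \mathcal{A}(\tilde z_h - E_h{y}^*_h,\phi) = \mathcal{A}(\tilde z_h,\phi) - \mathcal{A}_h({y}^*_h,\phi) + \mathcal{A}_h({y}^*_h - E_h{y}^*_h,\phi)$. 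Now I insert the test function $\phi_h$ into both KKT identities: from \eqref{aux:prob:2}, $\mathcal{A}(\tilde z_h,\phi) = \int_\Omega y_d\phi\,dx + \int_\Omega \lambda_h(-\Delta\phi)\,dx + \int_\Omega \mu_h\phi\,dx$, and from \eqref{Dkkt:new}, $\mathcal{A}_h({y}^*_h,\phi_h) = \int_\Omega y_d\phi_h\,dx - \int_\Omega \lambda_h(\Delta_h\phi_h)\,dx + \int_\Omega \mu_h\phi_h\,dx$. Subtracting, the zeroth-order and $\mu_h$ terms collapse to $\int_\Omega (y_d + \mu_h - {y}^*_h)(\phi-\phi_h)\,dx$, and the Laplacian terms combine into $\int_\Omega \lambda_h\big(-\Delta\phi + \Delta_h\phi_h\big)\,dx$. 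The key cancellation is that, thanks to the enriching-operator property \eqref{eq:EnrichPP}, $\int_\Omega \lambda_h \Delta E_h{y}^*_h\,dx$ matches $\int_\Omega \lambda_h \Delta_h {y}^*_h\,dx$ element-by-element when $\lambda_h \in \mathbb{P}_0(\cT_h)$; combined with $\phi = \tilde z_h - E_h{y}^*_h$ this reduces the Laplacian contribution to terms of the form $\int_\Omega \lambda_h \Delta_h(\phi_h - \phi)\,dx$ plus a term involving $\Delta\tilde z_h$ and $\Delta_h {y}^*_h$ that one re-expresses via integration by parts.

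Next I would carry out the integration-by-parts expansion of $\mathcal{A}_h({y}^*_h,\phi-\phi_h)$ exactly as in \eqref{star:1}, producing the residual volume term $\int_\Omega(y_d + \mu_h - {y}^*_h)(\phi-\phi_h)\,dx$ and the four edge terms involving $\jump{\partial\Delta {y}^*_h/\partial n}$, $\jump{\partial^2 {y}^*_h/\partial n^2}$, and $\jump{\partial^2 {y}^*_h/\partial n\partial t}$, together with a boundary/average term that vanishes because $\phi \in W$ is $H^2$-conforming and $\phi_h$ has continuous edge-integrals of the normal derivative by \eqref{interpo:2}. Each of these is then bounded by the corresponding estimator contribution exactly as in \eqref{estimate:1}--\eqref{eq:rel9}: Cauchy--Schwarz, the discrete trace inequality \eqref{eq:traceAg}, the inverse inequality \eqref{eq:inverse}, and the interpolation estimates of Lemma \ref{lem:approx_proj}, yielding $\eta_1\|\phi\|_h + \eta_2\|\phi\|_h + \eta_3\|\phi\|_h + \eta_4\|\phi\|_h$. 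The leftover term $\int_\Omega \lambda_h \Delta_h(\phi_h-\phi)\,dx$ is bounded by $\sum_{T\in\cT_h} h_T|\lambda_h|\,|\phi|_{H^2(T)} \lesssim \eta_5\|\phi\|_h$ as in \eqref{eq:rel5}, while $\mathcal{A}_h({y}^*_h - E_h{y}^*_h,\phi) \leq \|{y}^*_h - E_h{y}^*_h\|_h\|\phi\|_h \lesssim \eta_2\|\phi\|_h$ by Lemma \ref{Eh:lemma}. Dividing through by $\|\phi\|_h$ gives $\|\tilde z_h - E_h{y}^*_h\|_h \lesssim \eta_h$, and then the triangle inequality together with the enriching estimate $\|E_h{y}^*_h - {y}^*_h\|_h \lesssim \eta_2$ from Lemma \ref{Eh:lemma} finishes the bound on $\|{y}^*_h - \tilde z_h\|_h$.

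The one genuinely new point compared with Theorem \ref{thm:rel} is that here $\lambda_h$ is a piecewise-constant function rather than a real number, so I should be careful that the complementarity-type cancellations \eqref{eq:rel2}--\eqref{eq:rel3} are \emph{not} needed: in the proof of Theorem \ref{thm:rel} those cancellations exploited that $\phi = y^* - E_h y^*_h$ and used the continuous variational inequality, whereas here $\phi = \tilde z_h - E_h y^*_h$ and we only compare the two \emph{equalities} \eqref{aux:prob:2} and \eqref{Dkkt:new}, so all $\lambda_h$- and $\mu_h$-terms stay on the residual side and are controlled directly by $\eta_1$ and $\eta_5$ (there is no $\lambda-\lambda_h$ or $\mu-\mu_h$ appearing). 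I expect the main obstacle to be purely bookkeeping: correctly tracking the Laplacian terms so that the $E_h$ versus $\Delta_h$ mismatch is absorbed using \eqref{eq:EnrichPP} with $\lambda_h$ now varying from element to element, and making sure the boundary edge terms in the integration-by-parts identity genuinely cancel against the conformity of $\phi$ and the matching properties of $I_h$. Once that is in place, every individual estimate is identical to one already proved in Section \ref{sec:Apos}, so I would simply state that "the remaining terms are estimated exactly as in the proof of Theorem \ref{thm:rel}" and omit the repeated computations.
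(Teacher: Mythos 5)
Your proposal is correct and follows essentially the same route as the paper: set $\phi=\tilde z_h-E_h y_h^*$, $\phi_h=I_h\phi$, use coercivity of $\mathcal{A}(\cdot,\cdot)$, insert the two KKT identities \eqref{aux:prob:2} and \eqref{Dkkt:new}, and bound the resulting residual, edge, $\mu_h$- and $\lambda_h$-terms exactly as in Theorem \ref{thm:rel}, with the piecewise-constant $\lambda_h$ term absorbed into $\eta_5$ and the triangle inequality plus Lemma \ref{Eh:lemma} finishing the argument. The only cosmetic difference is your invocation of \eqref{eq:EnrichPP} in the Laplacian bookkeeping, which the paper does not need since the two Laplacian contributions combine directly into $-\int_\Omega\lambda_h\Delta_h(\phi-\phi_h)\,dx$.
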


\begin{proof}
Let $\phi=\tilde{z}_h-E_h{y}^*_h \in W$ and $\phi_h=I_h \phi \in W_h$, a use of coercive property of the bilinear form $\mathcal{A}(\cdot,\cdot)$, (\ref{aux:prob:2}) and (\ref{Dkkt:new}) leads to
\begin{eqnarray}
\|\phi\|_h^2&\lesssim & \mathcal{A}(\tilde{z}_h,\phi)-\mathcal{A}_h(E_h{y}_h^*,\phi)\nonumber\\
             &\lesssim & \int_{\Omega}y_d\phi\,dx+\int_{\Omega}\mu_h\phi\,dx-\mathcal{A}_h({y}^*_h,\phi)+\int_{\Omega}\lambda_h(-\Delta_h\phi)\,dx\nonumber\\&&+\mathcal{A}_h({y}^*_h-E_h{y}^*_h,\phi) \nonumber\\
             &\lesssim &\int_{\Omega}y_d(\phi-\phi_h)\,dx-\mathcal{A}_h({y}^*_h,\phi-\phi_h)\,dx+\int_{\Omega}\mu_h(\phi-\phi_h)\,dx\nonumber\\&&-\int_{\Omega}\lambda_h( \Delta_h(\phi-\phi_h))+\mathcal{A}_h({y}^*_h-E_h{y}^*_h,\phi)\nonumber\\
&=:&I_1+I_2+I_3+I_4+I_5.\label{apos:pro:1}
\end{eqnarray}
\par \noindent
Note that, $I_1+I_2+I_3$ and $I_5$ can be estimated following same arguments as in the Theorem \ref{thm:rel}. 
\noindent
Now, it remains to estimate $I_4$. 
An application of the Cauchy Schwarz inequality and Lemma \ref{lem:approx_proj}, yields
 \begin{eqnarray*}
|I_4| & \lesssim & \sum_{T \in \cT_h}  |\int_T \lambda_h\Delta(\phi_h- \phi)~dx|
 \lesssim  \sum_{T \in \cT_h} h_T|\lambda_h\| \| \Delta(\phi_h- \phi) \|_{L^2(T)}\\
& \lesssim &\sum_{T \in \cT_h} h_T|\lambda_h\| |\phi|_{H^2(T)} 
\lesssim {\eta}_5 \|\phi\|_{h}.
 \end{eqnarray*}

\par \noindent
Combining all the estimates together with (\ref{apos:pro:1}),   we get 
\begin{eqnarray*}
\|\phi\|_h = \|\tilde{z}_h-E_h{y}^*_h\|_h\lesssim {\eta}_h.
\end{eqnarray*}
An use of the triangle inequality $\|\tilde{z}_h-{y}^*_h\|_h\leq \|\tilde{z}_h-E_h{y}^*_h\|_h+\|E_h{y}^*_h-{y}^*_h\|_h$, in view of Lemma \ref{Eh:lemma} leads to the desired estimate. 
\end{proof}
\begin{theorem} \label{main:thm:2}
Let ${y}^*$ and ${y}^*_h$ be solutions of variational inequalities \eqref{QP2} and \eqref{DP2}, respectively. Then, it holds that

\begin{eqnarray*}
\|{y}^*-{y}_h^*\|_h&\lesssim& \Big({\eta}_h
+\displaystyle\sum_{T\in\Omega_1 \cup \Omega_2} \|\lambda\|_{L^2(T)}^{\frac{1}{2}} \Big( \sum_{e \in \mathcal{E}_T}\frac{1}{h_e}\Big\|\jump{\frac{\partial y_h^*}{\partial n}}\Big\|_{L^2(e)}^2 \Big)^{\frac{1}{2}} \nonumber\\ &&\qquad+\Big(\int_{\Omega_1}\lambda(\Delta {y}^*_h+u_a)\,dx \Big)^{\frac{1}{2}} +\Big(\int_{\Omega_2}\lambda(\Delta_h {y}^*_h+u_b)\,dx \Big)^{\frac{1}{2}}\Big).
\end{eqnarray*}


\end{theorem}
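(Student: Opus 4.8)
The argument should run parallel to the proofs of Theorem \ref{thm:rel} and Theorem \ref{thm:2}, the one new ingredient being that the control bound is now pointwise rather than integral, which is exactly what generates the extra terms over the active sets $\Omega_1:=\{x\in\Omega:-\Delta y^*(x)=u_a(x)\}$ and $\Omega_2:=\{x\in\Omega:-\Delta y^*(x)=u_b(x)\}$. First I would reduce to the auxiliary state $\tilde z_h$ of \eqref{aux:prob:2}: since $\|y^*-y_h^*\|_h\le\|y^*-\tilde z_h\|_h+\|\tilde z_h-y_h^*\|_h$ and the second term is $\lesssim\eta_h$ by Theorem \ref{thm:2}, it suffices to estimate $\|y^*-\tilde z_h\|_h$. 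Both $y^*$ and $\tilde z_h$ lie in $W$, on which $\mathcal A(w,w)=\|w\|_h^2$, so testing \eqref{kkt1} and \eqref{aux:prob:2} with $w=y^*-\tilde z_h$ gives the error identity
\[
\|y^*-\tilde z_h\|_h^2=\int_\Omega(\mu-\mu_h)(y^*-\tilde z_h)\,dx+\int_\Omega(\lambda-\lambda_h)\bigl(-\Delta(y^*-\tilde z_h)\bigr)\,dx .
\]

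For the state-constraint term I would insert $E_h y_h^*$: writing $y^*-\tilde z_h=(y^*-E_h y_h^*)+(E_h y_h^*-\tilde z_h)$, using $\int_\Omega E_h y_h^*\,dx=\int_\Omega y_h^*\,dx\ge\delta_3$ and the complementarity conditions for $\mu,\mu_h$ as in \eqref{eq:rel3}, the first part is $\le 0$, while the second equals $(\mu-\mu_h)\int_\Omega(E_h y_h^*-\tilde z_h)\,dx$ and is bounded by $|\mu-\mu_h|\,\|E_h y_h^*-\tilde z_h\|_{L^2(\Omega)}\lesssim|\mu-\mu_h|\,\eta_h$ (Lemma \ref{Eh:lemma}, Theorem \ref{thm:2}); since $|\mu-\mu_h|\lesssim\|y^*-\tilde z_h\|_{L^2(\Omega)}$ follows from the adjoint argument of Lemma \ref{lemma:adjoint} applied to \eqref{adjointN} and the adjoint of \eqref{aux:prob:2}, Young's inequality absorbs this contribution.

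The main work, and the main obstacle, is the control-constraint term. I would split $-\Delta(y^*-\tilde z_h)=\bigl((-\Delta y^*)-(-\Delta E_h y_h^*)\bigr)+\bigl(-\Delta(E_h y_h^*-\tilde z_h)\bigr)$; the last piece is $\lesssim\|\lambda-\lambda_h\|_{L^2(\Omega)}\,\|E_h y_h^*-\tilde z_h\|_h$ and is absorbed via the $L^2$-analogue of Lemma \ref{lemma:adjoint} (which gives $\|\lambda-\lambda_h\|_{L^2(\Omega)}\lesssim\|y^*-\tilde z_h\|_h$ by $H^2$-regularity on the convex domain), Theorem \ref{thm:2}, and Young's inequality. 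For $\int_\Omega(\lambda-\lambda_h)\bigl((-\Delta y^*)-(-\Delta E_h y_h^*)\bigr)\,dx$ I would insert the cell means $Q_h(-\Delta E_h y_h^*)=Q_h(-\Delta_h y_h^*)$ (using \eqref{eq:EnrichPP}); since $\lambda_h\in\mathbb P_0(\mathcal T_h)$, the fluctuations pair only with $\lambda$, hence are supported on elements meeting $\Omega_1\cup\Omega_2$, and by \eqref{eq:EnrichPP} they recombine into $\int_T\lambda\,\Delta_h(E_h y_h^*-y_h^*)\,dx$, which by Cauchy--Schwarz and Lemma \ref{Eh:lemma} is controlled by the weighted active-element jump terms $\|\lambda\|_{L^2(T)}^{1/2}\bigl(\sum_{e\in\mathcal E_T}h_e^{-1}\|\jump{\frac{\partial y_h^*}{\partial n}}\|_{L^2(e)}^2\bigr)^{1/2}$ of the statement. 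The cell-averaged remainder $\int_\Omega(\lambda-\lambda_h)\bigl((-\Delta y^*)-Q_h(-\Delta_h y_h^*)\bigr)\,dx$ is then treated element by element: on elements where $\lambda\equiv 0$ it reduces to $-\lambda_h|_T|T|\bigl(Q_T(-\Delta y^*)-Q_T(-\Delta_h y_h^*)\bigr)$, which is $\le 0$ by the discrete complementarity \eqref{Dlambda:1}--\eqref{Dlambda:3} and the feasibility $u_a\le-\Delta y^*\le u_b$; on elements meeting $\Omega_1\cup\Omega_2$ the $\lambda_h$-part is again $\le 0$, whereas the $\lambda$-part, using $-\Delta y^*=u_a$ on $\Omega_1$, $-\Delta y^*=u_b$ on $\Omega_2$, yields exactly $\int_{\Omega_1}\lambda(\Delta_h y_h^*+u_a)\,dx+\int_{\Omega_2}\lambda(\Delta_h y_h^*+u_b)\,dx$.

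Finally I would collect all estimates, apply Young's inequality to move the $\|y^*-\tilde z_h\|_h^2$ contributions to the left, take square roots (producing the half-powers in the stated active-set terms), and combine with the reduction of the first paragraph. The delicate point throughout is the control term: unlike in Theorem \ref{thm:rel}, where the analogous quantity vanished identically because the multiplier was a scalar and the constraint integral, here one must separate the sign-definite parts by playing the discrete complementarity \eqref{Dlambda:1}--\eqref{Dlambda:3} against the merely cell-averaged discrete feasibility in \eqref{KhN}, while retaining undigested the genuinely sign-indefinite consistency defects $\int_{\Omega_1}\lambda(\Delta_h y_h^*+u_a)\,dx$, $\int_{\Omega_2}\lambda(\Delta_h y_h^*+u_b)\,dx$ and the active-element jump terms.
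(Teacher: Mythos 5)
Your overall skeleton --- reduce to the auxiliary state $\tilde z_h$ via Theorem \ref{thm:2}, derive an error identity in terms of $(\mu-\mu_h)$ and $(\lambda-\lambda_h)$, kill the $\mu$-term by complementarity, and split the $\lambda$-term over active sets using complementarity, feasibility, \eqref{eq:EnrichPP} and Lemma \ref{Eh:lemma} --- is the same as the paper's. (The paper tests with $\phi=y^*-E_h y_h^*$ and bounds $\|y^*-E_h y_h^*\|_h$ directly through $\mathcal A(y^*-E_h y_h^*,\phi)=\mathcal A(\tilde z_h-E_h y_h^*,\phi)+\big[\mathcal A(y^*,\phi)-\mathcal A(\tilde z_h,\phi)\big]$, so $\int_\Omega(\mu-\mu_h)\phi\,dx\le 0$ comes for free and no absorption step with $|\mu-\mu_h|$ and $\|\lambda-\lambda_h\|_{L^2(\Omega)}$ is needed; your variant of that step is workable but more roundabout.)

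The genuine gap is in the control-constraint term, and it stems from your identification of $\Omega_1,\Omega_2$. In the paper these are the \emph{discrete} contact sets $\Omega_1=\{T\in\cT_h:\,Q_T(-\Delta y_h^*)=Q_T(u_a)\}$ and $\Omega_2=\{T\in\cT_h:\,Q_T(-\Delta y_h^*)=Q_T(u_b)\}$ --- unions of elements determined by $y_h^*$, which is what the notation $\sum_{T\in\Omega_1\cup\Omega_2}$ in the statement presupposes --- not the pointwise active sets of $y^*$. This is not cosmetic. Your element-by-element treatment of the cell-averaged remainder claims that the $\lambda$-part ``yields exactly'' $\int_{\Omega_1}\lambda(\Delta_h y_h^*+u_a)\,dx$; but un-averaging $Q_T(\Delta_h y_h^*)$ back to $\Delta_h y_h^*$ against $\lambda$ over a set $T\cap\Omega_1\subsetneq T$ leaves a fluctuation $\int_{T\cap\Omega_1}\lambda\,\big(Q_T(\Delta_h y_h^*)-\Delta_h y_h^*\big)\,dx$ that does not vanish ($\Delta_h y_h^*$ is linear, not constant, on $T$ for the bubble-enriched Morley space, and $\lambda$ is not constant) and is not controlled by $\eta_h$ or the stated jump terms; the same fluctuation is silently dropped when you assert that the earlier fluctuations ``recombine into'' $\int_T\lambda\,\Delta_h(E_h y_h^*-y_h^*)\,dx$. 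The paper avoids this entirely by never cell-averaging the $\lambda$-pairing: on the discrete contact set it writes $-\int_{\Omega_1}\lambda(\Delta y^*-\Delta E_h y_h^*)\,dx=\int_{\Omega_1}\lambda(\Delta E_h y_h^*-\Delta_h y_h^*)\,dx+\int_{\Omega_1}\lambda(\Delta_h y_h^*+u_a)\,dx-\int_{\Omega_1}\lambda(\Delta y^*+u_a)\,dx$, bounds the first term by the normal-derivative jumps via Lemma \ref{Eh:lemma}, keeps the second as the consistency defect, and discards the third as nonpositive by feasibility and \eqref{lambda:1}--\eqref{lambda:3}; the $\lambda_h$-pairing is exactly nonpositive because $\lambda_h$ is constant per element, $\int_T(\Delta_h y_h^*-\Delta E_h y_h^*)\,dx=0$ by \eqref{eq:EnrichPP}, and $\int_T(u_a+\Delta_h y_h^*)\,dx=0$ by the very definition of $\Omega_1$. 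If you replace your active sets by the discrete ones, your argument collapses to the paper's; as written, the ``exact'' identifications fail on partially active elements.
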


\begin{proof}
Set $\phi={y}^*-E_h{y}^*_h\in W$ and let $\phi_h\in W_h$. As in Theorem  \ref{thm:rel}, using coercivity of the bilinear form $\mathcal{A}(\cdot,\cdot)$, we get
\begin{eqnarray}
\|{y}^*-E_h{y}^*_h\|^2_{h}&\lesssim &\mathcal{A}({y}^*-E_h{y}^*_h,\phi)\nonumber\\
&=&\mathcal{A}(\tilde{z}_h-E_h{y}^*_h,\phi)+\mathcal{A}({y}^*,\phi)-\mathcal{A}(\tilde{z}_h,\phi).\label{eq:main:1:1}
\end{eqnarray}
In view of (\ref{kkt1}) and (\ref{Dkkt:new}), the term $\mathcal{A}({y}^*,\phi)-\mathcal{A}(\tilde{z}_h,\phi)$ satisfies  
\begin{eqnarray}
\mathcal{A}({y}^*,\phi)-\mathcal{A}(\tilde{z}_h,\phi)=\int_{\Omega}(\lambda-\lambda_h)(-\Delta \phi)\,dx+\int_{\Omega}(\mu-\mu_h)\phi\,dx.\label{main:eq:1:1}
\end{eqnarray}
Following the same arguments as in Theorem \ref{thm:rel}, we obtain
\begin{align}
\int_{\Omega} (\mu-\mu_h) \phi\,dx\leq 0.\label{main:eq:1:2}
\end{align}

\par \noindent
Now, to estimate the term $\int_{\Omega}(\lambda-\lambda_h)(-\Delta\phi)\,dx$,
we split it as
\begin{eqnarray}
\int_{\Omega}(\lambda-\lambda_h)(-\Delta\phi)\,dx=\int_{\Omega_1}(\lambda-\lambda_h)(-\Delta\phi)\,dx+\int_{\Omega_2}(\lambda-\lambda_h)(-\Delta\phi)\,dx,\label{:q:1}
\end{eqnarray}
where  $\Omega_1$ and $\Omega_2$ are discrete control contact sets defined by
 \begin{align*}
 \Omega_1 &= \{T \in \mathcal{T}_h : Q_T(-\Delta y^*_h)=Q_T(u_a)\}, \\
 \Omega_2  &= \{T \in \mathcal{T}_h : Q_T(-\Delta y^*_h)=Q_T(u_b)\}.
 \end{align*}
 The first term of the right hand side of equation (\ref{:q:1}) can be estimates as follows.
\begin{eqnarray}
\int_{\Omega_1}(\lambda-\lambda_h)(-\Delta\phi)\,dx=\int_{\Omega_1}\lambda_h(\Delta\phi)\,dx-\int_{\Omega_1}\lambda(\Delta\phi)\,dx\nonumber\\
=\int_{\Omega_1}\lambda_h(\Delta {y}^*-\Delta E_h{y}^*_h)\,dx-\int_{\Omega_1}\lambda(\Delta {y}^*-\Delta E_h{y}^*_h)\,dx.\label{es:1}
\end{eqnarray}
Using $\lambda_h \geq 0$, $u_a\leq -\Delta {y}^*$, relation (\ref{eq:EnrichPP})  and $\displaystyle\sum_{T\in\Omega_1}\lambda_h\int_{T}(u_a+\Delta_h{y}^*_h)\,dx=0$, we  find
\begin{align}
\int_{\Omega_1}\lambda_h(\Delta {y}^*-\Delta E_h{y}^*_h)\,dx&=\int_{\Omega_1}\lambda_h(\Delta{y}^*+u_a)\,dx-\int_{\Omega_1}\lambda_h(u_a+\Delta E_h{y}^*_h)\,dx\nonumber\\&=\int_{\Omega_1}\lambda_h(\Delta{y}^*+u_a)\,dx-\int_{\Omega_1}\lambda_h(u_a+\Delta_h{y}^*_h)\,dx\nonumber\\&\hspace{0.5cm}-\int_{\Omega_1}\lambda_h(\Delta_h{y}^*_h-\Delta E_h{y}^*_h)\,dx\nonumber\\
&\leq 0.\label{es:2}
\end{align}
In view of \eqref{lambda:1} and Lemma \ref{Eh:lemma}, we have
\begin{align}
-\int_{\Omega_1}\lambda(\Delta {y}^*-\Delta E_h{y}^*_h)\,dx=&\int_{\Omega_1}\lambda(\Delta E_h{y}^*_h-\Delta{y}^*)\,dx\nonumber\\
&=\int_{\Omega_1}\lambda(\Delta E_h{y}^*_h-\Delta_h{y}^*_h)\,dx+\int_{\Omega_1}\lambda (\Delta_h{y}^*_h+u_a)\,dx\nonumber\\& \quad -\int_{\Omega_1}\lambda (\Delta_h{y}^*+u_a)\,dx \notag\\
& \lesssim \displaystyle\sum_{T\in\Omega_1} \|\lambda\|_{L^2(T)} \sum_{e \in \mathcal{E}_T}\frac{1}{h_e}\Big\|\jump{\frac{\partial y_h^*}{\partial n}}\Big\|_{L^2(e)}^2+\int_{\Omega_1}\lambda(\Delta {y}^*_h+u_a)\,dx.\label{es:31}
\end{align}
Combining (\ref{es:1}),(\ref{es:2}) and (\ref{es:31}), we get
\begin{align}
\int_{\Omega_1}(\lambda-\lambda_h)(-\Delta\phi)\,dx& \lesssim \displaystyle\sum_{T\in\Omega_1} \|\lambda\|_{L^2(T)} \sum_{e \in \mathcal{E}_T}\frac{1}{h_e}\Big\|\jump{\frac{\partial y_h^*}{\partial n}}\Big\|_{L^2(e)}^2+\int_{\Omega_1}\lambda(\Delta {y}^*_h+u_a)\,dx.\label{es:4}
\end{align}
Repeating the similar arguments, we estimate the second term of the right hand side of equation (\ref{:q:1})
\begin{align}
\int_{\Omega_2}(\lambda-\lambda_h)(-\Delta\phi)\,dx& \lesssim \displaystyle\sum_{T\in\Omega_2} \|\lambda\|_{L^2(T)} \sum_{e \in \mathcal{E}_T}\frac{1}{h_e}\Big\|\jump{\frac{\partial y_h^*}{\partial n}}\Big\|_{L^2(e)}^2+\int_{\Omega_2}\lambda(\Delta_h {y}^*_h+u_b)\,dx.\label{es:5}
\end{align}
A use of  (\ref{es:4}) and (\ref{es:5}) in \eqref{:q:1} yields
\begin{align}
\int_{\Omega}(\lambda-\lambda_h)(-\Delta\phi)\,dx& \lesssim \displaystyle\sum_{T\in\Omega_1 \cup \Omega_2} \|\lambda\|_{L^2(T)} \sum_{e \in \mathcal{E}_T}\frac{1}{h_e}\Big\|\jump{\frac{\partial y_h^*}{\partial n}}\Big\|_{L^2(e)}^2+\int_{\Omega_1}\lambda(\Delta {y}^*_h+u_a)\,dx \nonumber\\ &\qquad+\int_{\Omega_2}\lambda(\Delta_h {y}^*_h+u_b)\,dx.\label{es:6}
\end{align}
Combining (\ref{main:eq:1:1}), (\ref{main:eq:1:2}) and (\ref{es:6}), we have
\begin{eqnarray*}
\mathcal{A}({y}^*,\phi)-\mathcal{A}(\tilde{z}_h,\phi)& \lesssim& \displaystyle\sum_{T\in\Omega_1 \cup \Omega_2} \|\lambda\|_{L^2(T)} \sum_{e \in \mathcal{E}_T}\frac{1}{h_e}\Big\|\jump{\frac{\partial y_h^*}{\partial n}}\Big\|_{L^2(e)}^2+\int_{\Omega_1}\lambda(\Delta {y}^*_h+u_a)\,dx \nonumber\\ &&\qquad+\int_{\Omega_2}\lambda(\Delta_h {y}^*_h+u_b)\,dx.
\end{eqnarray*}
Using   the continuity of  bilinear  form  and the Young's inequality in (\ref{eq:main:1:1}) leads to
\begin{eqnarray}
\|{y}^*-E_h{y}^*_h\|_{h}^2 &\lesssim& \Big(\|\tilde{z}_h-E_h{y}^*_h\|_h^2+\displaystyle\sum_{T\in\Omega_1 \cup \Omega_2} \|\lambda\|_{L^2(T)} \sum_{e \in \mathcal{E}_T}\frac{1}{h_e}\Big\|\jump{\frac{\partial y_h^*}{\partial n}}\Big\|_{L^2(e)}^2 \nonumber\\ &&\qquad+\int_{\Omega_1}\lambda(\Delta {y}^*_h+u_a)\,dx +\int_{\Omega_2}\lambda(\Delta_h {y}^*_h+u_b)\,dx\Big).\label{es:7}
\end{eqnarray}
Finally, a use of triangle inequality, (\ref{es:7}), Theorem \ref{thm:2} and Lemma \ref{Eh:lemma} gives
\begin{eqnarray*}
\|{y}^*-{y}^*_h\|_h^2&\lesssim& \Big({\eta}_h^2+\displaystyle\sum_{T\in\Omega_1 \cup \Omega_2} \|\lambda\|_{L^2(T)} \sum_{e \in \mathcal{E}_T}\frac{1}{h_e}\Big\|\jump{\frac{\partial y_h^*}{\partial n}}\Big\|_{L^2(e)}^2 \nonumber\\ &&\qquad+\int_{\Omega_1}\lambda(\Delta {y}^*_h+u_a)\,dx +\int_{\Omega_2}\lambda(\Delta_h {y}^*_h+u_b)\,dx\Big).
\end{eqnarray*}
This completes the proof.

\end{proof}
\par\noindent
We would like to remark here that, in Theorem \ref{main:thm:2} the estimate is not a genuine a posteriori error estimate because of the presence of $\lambda$ in the right hand side, but it is useful in realizing the asymptotic convergence  of the adaptive algorithm. 
\noindent
Now, following the idea of Lemma \ref{lemma:adjoint} and Theorem \ref{thm:3}, we can estimate the error in Lagrange multipliers, hence we state the result omitting details of the proof.
\begin{lemma} It holds that,
\begin{eqnarray*}
|\mu-\mu_h|+\|\lambda-\lambda_h\|_{L^2(\Omega)}&\lesssim& \eta_h.\label{est:3}
\end{eqnarray*}
\end{lemma}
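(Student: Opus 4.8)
The plan is to mimic the two-step strategy used for the first optimal control problem, namely first estimate the error in the Lagrange multipliers by the discrepancy between the auxiliary state $\tilde z_h$ and the continuous optimal state ${y}^*$, and then bound that discrepancy by $\eta_h$ using the already-established results of this section. Concretely, I would introduce the auxiliary adjoint state $\hat p_h \in H^1_0(\Omega)$ solving
\begin{equation*}
\int_\Omega \nabla \hat p_h \cdot \nabla w\,dx = \int_\Omega (\tilde z_h - y_d) w\,dx - \int_\Omega \mu_h w\,dx \qquad \forall w \in H^1_0(\Omega),
\end{equation*}
subtract it from the continuous adjoint equation \eqref{adjointN}, and test with $w = p - \hat p_h - \hat C\psi$ where $\psi \in \mathcal C_0^\infty(\Omega)$ is the usual normalized cut-off function and $\hat C = \frac{1}{|\Omega|}\int_\Omega (p - \hat p_h)\,dx$. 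Exactly as in Lemma \ref{lemma:adjoint}, the fact that $\mu - \mu_h \in \mathbb{R}$ and $\int_\Omega (p - \hat p_h - \hat C\psi)\,dx = 0$ kills the multiplier term, and a Poincaré inequality plus a kick-back argument yields $\|\nabla(p - \hat p_h)\| \lesssim \|{y}^* - \tilde z_h\|_{L^2(\Omega)}$, hence $|\mu - \mu_h| \lesssim \|{y}^* - \tilde z_h\|_{L^2(\Omega)}$.

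For the control multiplier $\lambda$, the difference between \eqref{kkt1} and \eqref{aux:prob:2} gives
\begin{equation*}
\int_\Omega (\lambda - \lambda_h)(-\Delta w)\,dx = \mathcal A({y}^* - \tilde z_h, w) + \int_\Omega (\mu - \mu_h) w\,dx \qquad \forall w \in W.
\end{equation*}
Here the difference from the first problem is that $\lambda - \lambda_h \in L^2(\Omega)$ rather than a scalar, so I cannot simply choose one clever test function. Instead I would use a duality/regularity argument: pick $w \in W$ solving $-\Delta w = \lambda - \lambda_h$ with suitable boundary conditions (or argue directly via the surjectivity of $-\Delta : W \to L^2(\Omega)$ together with elliptic regularity $\|w\|_{H^2} \lesssim \|\lambda - \lambda_h\|_{L^2(\Omega)}$), substitute it into the identity above, apply Cauchy--Schwarz and the boundedness of $\mathcal A(\cdot,\cdot)$, and absorb, obtaining $\|\lambda - \lambda_h\|_{L^2(\Omega)} \lesssim \|{y}^* - \tilde z_h\|_h + |\mu - \mu_h| \lesssim \|{y}^* - \tilde z_h\|_h$. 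This is the analogue of \eqref{est:2}.

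It remains to bound $\|{y}^* - \tilde z_h\|_h$ by $\eta_h$. By the triangle inequality $\|{y}^* - \tilde z_h\|_h \le \|{y}^* - {y}^*_h\|_h + \|{y}^*_h - \tilde z_h\|_h$; Theorem \ref{thm:2} already gives $\|{y}^*_h - \tilde z_h\|_h \lesssim \eta_h$, and Theorem \ref{main:thm:2} gives $\|{y}^* - {y}^*_h\|_h$ in terms of $\eta_h$ plus the extra contact-set terms involving $\lambda$. So strictly speaking the final bound for $\|{y}^* - \tilde z_h\|_h$ inherits those same extra terms, and the multiplier estimate does too. The cleanest route, and the one I expect the authors intend, is to observe that the contact-set terms $\int_{\Omega_1}\lambda(\Delta {y}^*_h + u_a)\,dx$, $\int_{\Omega_2}\lambda(\Delta_h {y}^*_h + u_b)\,dx$ and the jump sums weighted by $\|\lambda\|_{L^2(T)}$ are themselves of higher order / dominated by $\eta_h$ on the contact elements (each factor $\Delta_h {y}^*_h + u_a$ vanishes in the $Q_h$-average on $\Omega_1$, so it is controlled by oscillation and by $\eta_1$-type quantities), so that altogether $\|{y}^* - \tilde z_h\|_h \lesssim \eta_h$ and therefore $|\mu - \mu_h| + \|\lambda - \lambda_h\|_{L^2(\Omega)} \lesssim \eta_h$. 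The main obstacle is precisely this last point: making rigorous that the $\lambda$-dependent terms appearing in Theorem \ref{main:thm:2} can be folded back into $\eta_h$ (rather than left as non-computable remainders), which requires exploiting the complementarity structure \eqref{lambda:1}--\eqref{lambda:3} and the $Q_h$-consistency of $\tcK_h$ on the contact sets; the duality step for the $L^2$-valued multiplier $\lambda$ is the second, more technical, point.
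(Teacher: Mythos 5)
The paper gives no proof of this lemma at all --- it only says the result follows ``following the idea of Lemma~\ref{lemma:adjoint} and Theorem~\ref{thm:3}'' --- and your plan is exactly the route the authors point to: bound $|\mu-\mu_h|$ and $\|\lambda-\lambda_h\|_{L^2(\Omega)}$ by $\|y^*-\tilde z_h\|_h$ via the auxiliary adjoint state, then bound $\|y^*-\tilde z_h\|_h$ by $\eta_h$. Your one genuinely new ingredient --- replacing the scalar identity \eqref{eq:IntP1} by a duality argument, i.e.\ testing with the $w\in W$ solving $\Delta w=\lambda-\lambda_h$ and using $\|w\|_{H^2(\Omega)}\lesssim\|\lambda-\lambda_h\|_{L^2(\Omega)}$ on the convex polygon --- is the correct and necessary modification now that $\lambda-\lambda_h$ lives in $L^2(\Omega)$ rather than $\mathbb{R}$; the $\mu$-part carries over verbatim since $\mu-\mu_h$ is still a scalar.

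The obstacle you flag at the end is, however, a real gap, and the paper does not resolve it either. The analogue of Lemma~\ref{thm:3} requires $\|y^*-\tilde z_h\|_h\lesssim\eta_h$, and the only available control of $\|y^*-y_h^*\|_h$ in this setting is Theorem~\ref{main:thm:2}, which (as the paper itself remarks) is not a genuine a posteriori bound because of the $\lambda$-dependent contact-set terms. Your proposed repair is only partially convincing: the weighted jump sums can be absorbed as $\|\lambda\|_{L^2(\Omega)}\,\eta_2^2\lesssim\eta_h^2$ with a mesh-independent constant, but the terms $\int_{\Omega_1}\lambda(\Delta_h y_h^*+u_a)\,dx$ and $\int_{\Omega_2}\lambda(\Delta_h y_h^*+u_b)\,dx$, even after exploiting $\int_T(\Delta_h y_h^*+u_a)\,dx=0$ on $T\subset\Omega_1$, reduce only to $\sum_T\|\lambda-Q_T\lambda\|_{L^2(T)}\|\Delta_h y_h^*+u_a\|_{L^2(T)}$, which is an oscillation of the unknown multiplier $\lambda$ and is not dominated by $\eta_h$ without additional regularity of $\lambda$ or a separate argument. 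So either the lemma should be stated with these residual terms (as Theorem~\ref{main:thm:2} is), or a further step is needed; your proposal correctly isolates this as the missing piece rather than papering over it, but it does not close it.
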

\par \noindent
The following local efficiency estimates can be proved using bubble function techniques as in Theorem \ref{thm:EffEstimates}.

\begin{theorem}
There exists a positive constant $C>0$ depending on the shape regularity of $\mathcal{T}_h$ such that
\begin{eqnarray*}
h_T^2  \|y_d+\mu_h-{y}_h^*\|_{L^2(T)} &\lesssim& \|{y}^*-{y}^*_h\|_{2,T}+h_T^2\|\mu-\mu_h\|_{L^2(T)}+\|\lambda-\lambda_h\|_{L^2(T)}+Osc(y_d;T)\;\;\forall T\in\mathcal{T}_h, \label{eq:eff11}\\
{\beta}{h_e^{-1/2}}\Big\|\jump{\frac{\partial {y}^*_h}{\partial n} } \Big\|_{L^2(e)}&\lesssim& \sum_{T\in\mathcal{T}_e} \Big(\|{y}^*-{y}^*_h\|_{2,T}+h_T^2\|\mu-\mu_h\|_{L^2(T)}+\|\lambda-\lambda_h\|_{L^2(T)} \Big)\nonumber\\&&\qquad+Osc(y_d;\mathcal{T}_e)\;\;\;\;\forall e\in \mathcal{E}_h^i, \label{eq:eff21}\\
\beta h_e^{1/2}\Bigg\|\jump{\frac{\partial^2 {y}^*_h}{\partial n^2} } \Bigg\|_{L^2(e)}&\lesssim& \sum_{T\in\mathcal{T}_e} \Big(\|{y}^*-{y}^*_h\|_{2,T}+h_T^2\|\mu-\mu_h\|_{L^2(T)}+\|\lambda-\lambda_h\|_{L^2(T)} \Big)\nonumber\\&&\qquad+Osc(y_d;\mathcal{T}_e)\;\;\;\forall e\in \mathcal{E}_h^i,  \label{eq:eff31}\\
\beta{h_e^{3/2}\Bigg\|\jump{\frac{\partial (\Delta {y}_h^*)}{\partial n} } \Bigg\|_{L^2(e)}}&\lesssim& \sum_{T\in\mathcal{T}_e} \Big(\|{y}^*-{y}^*_h\|_{2,T}+h_T^2\|\mu-\mu_h\|_{L^2(T)}+\|\lambda-\lambda_h\|_{L^2(T)} \Big)\nonumber\\&&\qquad+Osc(y_d;\mathcal{T}_e);\;\;\;\forall e\in \mathcal{E}_h^i,  \label{eq:eff41}\\
{h_T |\lambda_h|}&\lesssim &\|{y}^*-{y}^*_h\|_{2,T}+h_T^2\|\mu-\mu_h\|_{L^2(T)}+\|\lambda-\lambda_h\|_{L^2(T)}+Osc(y_d;T)\;\;\forall T\in\mathcal{T}_h,\label{eq:eff51}
\end{eqnarray*}
where $\|w\|_{2,T}:=\beta|w|_{H^2(T)}+h_T^2\|w\|_{L^2(T)}$  for any $w \in H^2(\Omega, \mathcal{T}_h)$,
 $Osc(y_d;T):=h_T^2\|y_d-\bar{y}_d\|_{L^2(T)}^2$ with  $\bar{y}_d:=\frac{1}{|T|}\int_T y_d\,dx$ and $\mathcal{T}_e$ denotes the union of elements sharing the edge $e$.
\end{theorem}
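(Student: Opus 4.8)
The plan is to mimic the bubble-function arguments of Theorem~\ref{thm:EffEstimates} almost verbatim, since the only structural difference between the two settings is that now $\lambda\in L^2(\Omega)$ (rather than $\lambda\in\mathbb{R}$) and appears through the inhomogeneous relation \eqref{kkt1} instead of \eqref{kkt}. All five local bounds share the same skeleton: choose an appropriate (interior- or edge-supported) polynomial bubble $\tilde b_T$, $\hat b_T$, or $\theta_i\theta_j$, test the continuous Euler--Lagrange identity \eqref{kkt1} against its zero-extension, and then subtract the discrete identity \eqref{Dkkt:new} to convert the local residual into a combination of $\|y^*-y_h^*\|_{2,T}$, $\|\lambda-\lambda_h\|_{L^2(T)}$, $h_T^2\|\mu-\mu_h\|_{L^2(T)}$ and the data oscillation, using the Cauchy--Schwarz inequality, the inverse inequalities \eqref{eq:inverse}--\eqref{eq:inverse1}, the discrete trace inequality \eqref{eq:traceAg}, and the scaling properties of the bubbles.

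First I would treat $\eta_1$, the volume term: set $\phi_T=(\bar y_d+\mu_h-y_h^*)\tilde b_T$, extend by zero to $\tilde\phi\in W$, and note that $\beta\int_T D^2 y_h^*:D^2\tilde\phi\,dx=\int_T \lambda_h(-\Delta_h\tilde\phi)\,dx$ by \eqref{Dkkt:new} (since $\tilde\phi$ and $\nabla\tilde\phi$ vanish on $\partial T$, the jump terms drop). Plugging \eqref{kkt1} into $\int_T(y_d+\mu_h-y_h^*)\phi_T\,dx$ and rearranging gives precisely $\beta\int_T D^2(y^*-y_h^*):D^2\tilde\phi+\int_T(y^*-y_h^*)\tilde\phi+\int_T(\mu_h-\mu)\tilde\phi+\int_T(\lambda-\lambda_h)\Delta_h\tilde\phi$, and the inverse estimate $|\phi_T|_{H^2(T)}\lesssim h_T^{-2}\|\phi_T\|_{L^2(T)}$ together with $\|\Delta\phi_T\|_{L^2(T)}\lesssim h_T^{-2}\|\phi_T\|_{L^2(T)}$ closes the bound after absorbing $\|\phi_T\|_{L^2(T)}$. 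The edge terms $\eta_2$, $\eta_3$, $\eta_4$ are handled exactly as in parts (ii)--(iv) of Theorem~\ref{thm:EffEstimates}: build the edge bubbles $\theta_1,\theta_2$ (resp.\ $\theta_3,\theta_4$), use the integration-by-parts formula \eqref{eq:IntF} to expose the jump, insert \eqref{kkt1} and \eqref{Dkkt:new}, and bound the resulting volume integrals by the local error quantities; the key point, as before, is that $\eta_4$'s estimate produces an extra $h_e^{1/2}\beta\|\jump{\partial^2 y_h^*/\partial n^2}\|_{L^2(e)}$ term which is then absorbed via the already-proved bound \eqref{eq:eff31}, and $\eta_3$ similarly leans on \eqref{eq:eff11}. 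Finally $\eta_5$: with $\hat b_T$ vanishing to second order on $\partial T$ and $\Delta\psi_T=\lambda_h\Delta\hat b_T$, the identity $\int_T D^2 y_h^*:D^2\psi_T\,dx=0$ (from \eqref{eq:IntF}, no boundary contributions) lets \eqref{kkt1} convert $\|\lambda_h\|_{L^2(T)}^2\lesssim\int_T\lambda_h\Delta\psi_T\,dx$ into the standard combination, and a Poincar\'e/scaling argument yields $h_T^2\|\psi_T\|_{L^2(T)}+\|\psi_T\|_{H^2(T)}\lesssim\|\lambda_h\|_{L^2(T)}$; one then concludes using \eqref{eq:eff11}--\eqref{eq:eff41} to eliminate the $h_T^2\|y_d+\mu_h-y_h^*\|_{L^2(T)}$ remainder.

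The main obstacle I anticipate is purely bookkeeping rather than conceptual: because $\lambda_h\in\mathbb{P}_0(\cT_h)$ is now \emph{piecewise} constant (not a single real number), the manipulation $\int_\Omega\lambda_h(\Delta_h w)\,dx=\sum_T|\lambda_h|_T\int_T\Delta w$ must be kept element-local throughout, and the term $\|\lambda-\lambda_h\|_{L^2(T)}$ in each inequality genuinely depends on $T$; one must be careful that in the $\eta_4$ estimate (which lives on the patch $\cT_e$) the oscillation and error terms are summed over both triangles of $\cT_e$. Modulo this care, every step is a line-by-line transcription of Theorem~\ref{thm:EffEstimates} with \eqref{kkt}, \eqref{Dkkt} replaced by \eqref{kkt1}, \eqref{Dkkt:new}, so I would simply remark that the proof is identical and omit the routine details, exactly as the statement's preamble announces.
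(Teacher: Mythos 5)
Your proposal matches the paper exactly: the authors give no separate proof for this theorem and simply remark that it follows by the bubble-function techniques of Theorem~4.4, which is precisely the line-by-line transcription you describe, including the substitution of \eqref{kkt1}, \eqref{Dkkt:new} for \eqref{kkt}, \eqref{Dkkt} and the element-local treatment of the now piecewise-constant $\lambda_h$ and $L^2$ multiplier $\lambda$. Your sketch is, if anything, more detailed than what the paper provides.
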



\section{Numerical Assessments}\label{sec:NumTests}
In this section, we perform numerical experiments to illustrate the performance of the error estimators derived in Section \ref{sec:Apos} and Section \ref{sec:OCPs}. For this, we have considered four examples. The data of first example is for the purely integral state constraints, the second one is based on the purely integral control constraint, the third example consists of the integral state and integral control constraints and the last example concerns the integral state and pointwise control constraints. The discrete problem is solved using the primal-dual active set method \cite{BIK:1999:PrimalDual,kunisch2002,kunisch2003,ito}.
For the adaptive refinement, we use the following paradigm
\begin{equation*}
{\bf SOLVE}\longrightarrow  {\bf ESTIMATE} \longrightarrow {\bf
MARK}\longrightarrow {\bf REFINE}
\end{equation*}
 We compute the discrete state using the primal-dual active set algorithm in step 'SOLVE'. Thereafter in step  'ESTIMATE', we compute the error estimator on each element $T \in \mathcal{T}_h$ and use D\"{o}rfler  marking strategy with parameter $\theta=0.3$ to mark the elements for refinement. Finally, a new adaptive mesh is obtained by performing refinement using the newest vertex bisection algorithm.
Below, we consider various test examples.
\par \noindent
\begin{exam}\label{exam:1}
This example consists of the integral state constraints as active constraints \cite{yuan2009}. Here, we solve the following problem on $\Omega=(0,1)^2$ with $\beta=1$.
\begin{eqnarray}
\begin{cases}
\displaystyle\min_{y\in \mathcal{K}}\Big\{\frac{1}{2}\|y-y_d\|^2_{L_2(\Omega)}+\frac{\beta}{2}\|u\|^2_{L_2(\Omega)},\\
\text{s.t.}\\
 -\Delta y=f+u\;\;\;\;\text{in}\;\Omega,\\
y=0\;\;\text{on}\;\partial\Omega,\\
\int_{\Omega}y\,dx\geq \delta_2~~ \text{and}~~\int_{\Omega}u\,dx\geq \delta_1,
\end{cases}\label{num:f}
\end{eqnarray}
with the exact solution and the data as
\begin{align*}
p&=sin(2\pi x_1)sin(2\pi x_2)+\frac{3}{8}sin(2\pi x_1)sin(4\pi x_2),\\
y&=p,\\
y_d&=y+\Delta p-0.4,
\\f&=-\Delta y-u,
\\
\delta_2&=-0.4,\\
\delta_1&=0,\\
u&=\max\{\tilde{p}+ \beta \tilde{\delta_1}, 0 \}-p,
 \end{align*}
where $\tilde{p}=\frac{\int_{\Omega}p\,dx}{\int_{\Omega}1\,dx}$ and $\tilde{\delta_1}= \frac{\delta_1}{ \int_{\Omega} 1 dx}$.
\end{exam}
\noindent
 Figure  \ref{fig:Ex1_error} depicts convergence behavior of the error and the estimator with respect to the increasing number of degrees of freedom (DoFs).
 From this figure, it is evident that  both error and error estimator converge with optimal rate ($1/\sqrt{DoFs}$).  Figure  \ref{fig:Ex1_error} also ensures the reliability of the error estimator.  Figure \ref{fig:Ex1_effi} shows the efficiency indices, ensuring that the error estimator is efficient. 

\begin{figure}[h!]
\centering
\begin{subfigure}{.49\textwidth}
  \centering
  \includegraphics[width=8.5cm, height=7cm]{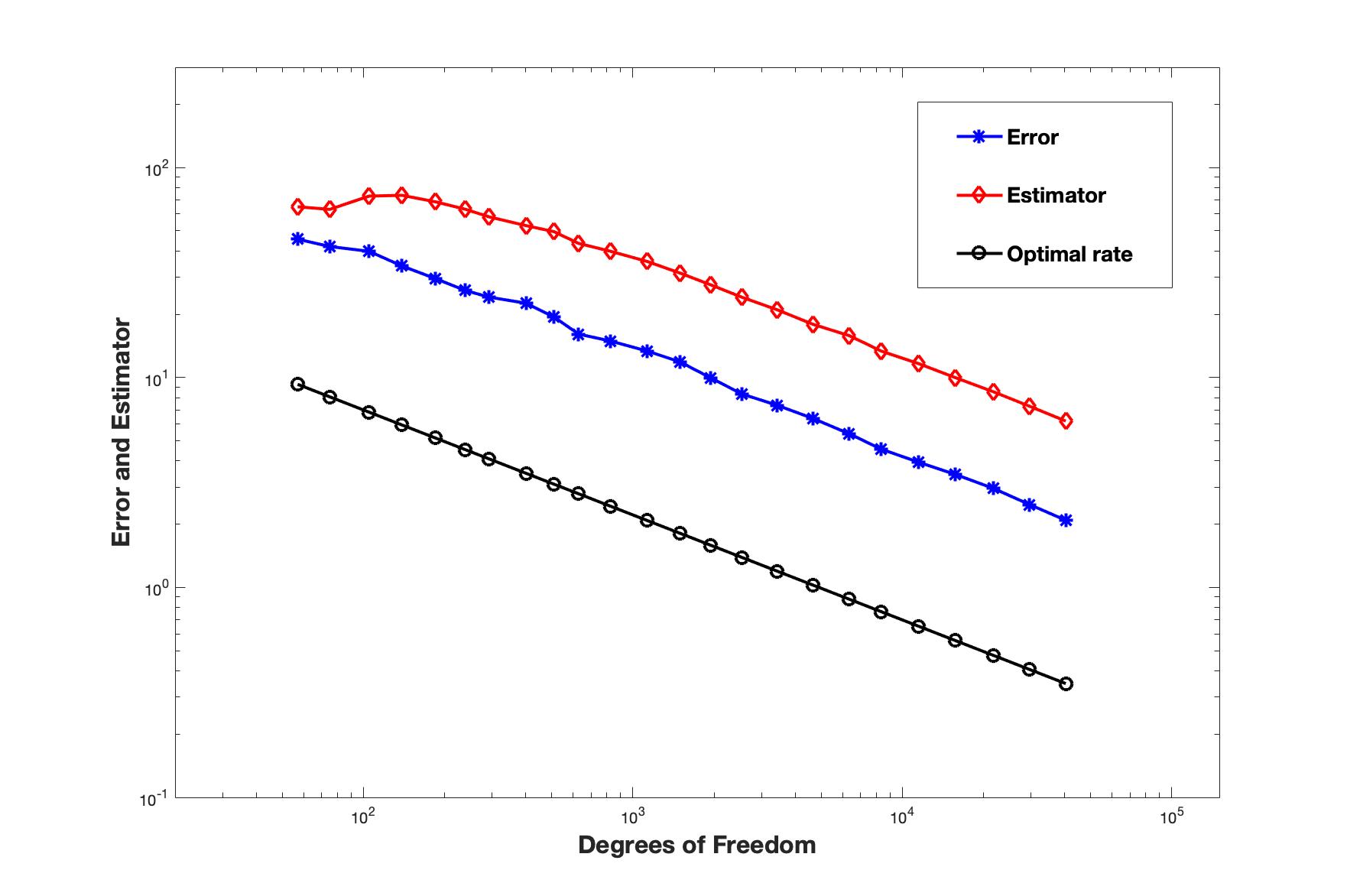} 
 \caption{Error  and Estimator}\label{fig:Ex1_error}
\end{subfigure}
\begin{subfigure}{.49\textwidth}
 \centering
 \includegraphics[width=8.5cm, height=7cm]{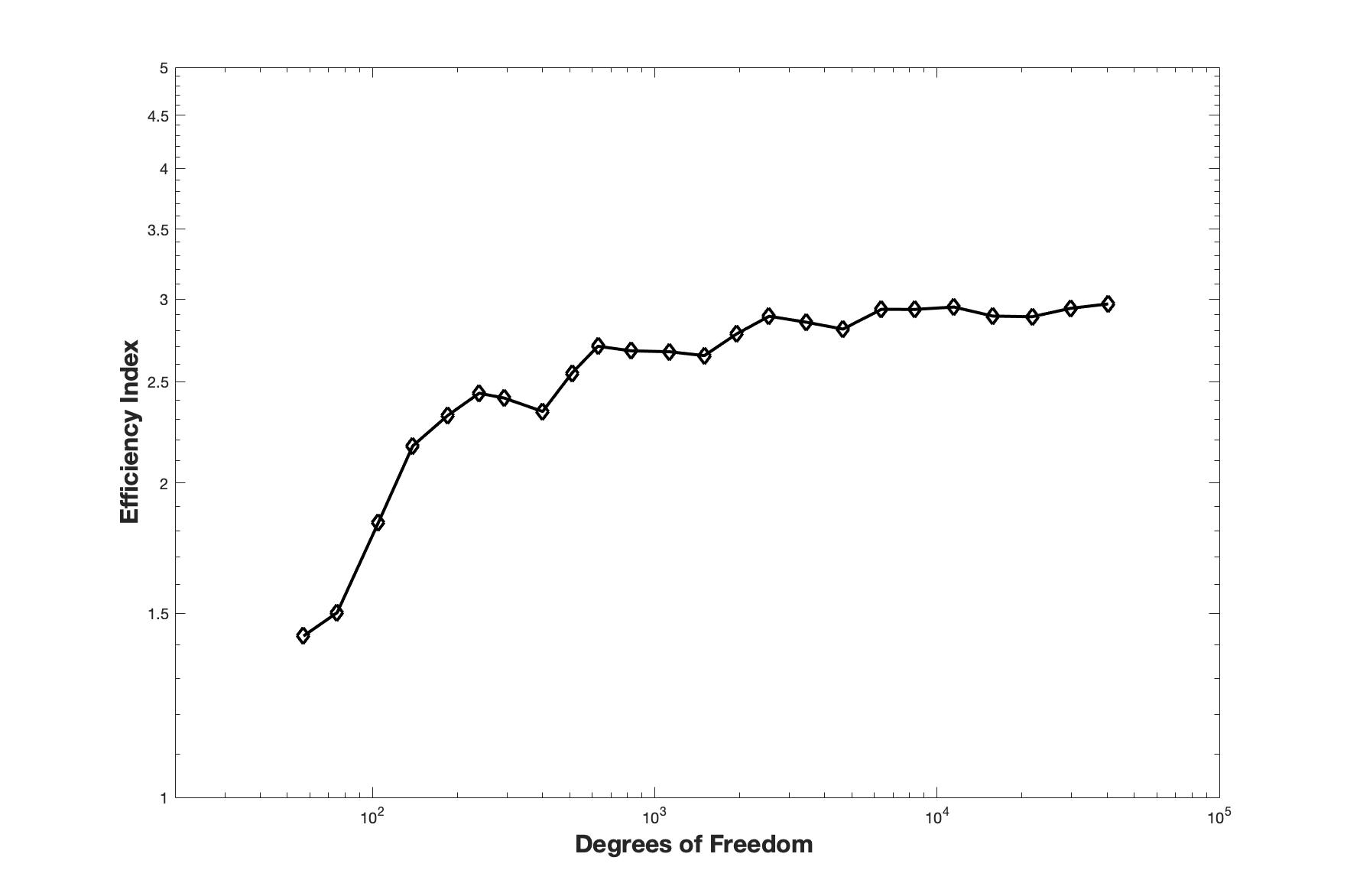}
   \caption{Efficiency Index}\label{fig:Ex1_effi}
\end{subfigure}
\caption{Error, estimator and efficiency index for Example \ref{exam:1}}
\end{figure}



%


%
%

%

\begin{exam}\label{Exam:2}
In this example \cite{ge2009}, we consider the optimal control problem (\ref{num:f}) with  purely integral  control constraints given on the domain $\Omega=(0,1)\times (0,1)$ with $\beta=1$ as follows
\begin{align*}
p&=sin(\pi x_1)sin(\pi x_2),\\
y&=2 \pi^2 p+y_d\\
y_d&=0,\\
f&=4 \pi^4 p+p-\frac{4}{\pi^2},\\
\delta_2&=100,\\
\delta_1&=0,\\
u&=\max\{\tilde{p}+ \beta \tilde{\delta_1}, 0 \}-p,
 \end{align*}
where $\tilde{p}=\frac{\int_{\Omega}p\,dx}{\int_{\Omega}1\,dx}$ and $\tilde{\delta_1}= \frac{\delta_1}{ \int_{\Omega} 1 dx}$.
\end{exam}
\par \noindent
For this example, the convergence behavior of the error and estimator is shown in Figure \ref{Ex_2:err}, which confirms that both error and estimator converges optimally and also that the estimator is reliable. The efficiency of the estimator is ensured by efficiency index depicted in Figure \ref{Ex_2:effi}.
%
%



\begin{figure}[h!]
\centering
\begin{subfigure}{.49\textwidth}
  \centering
  \includegraphics[width=8.5cm, height=7cm]{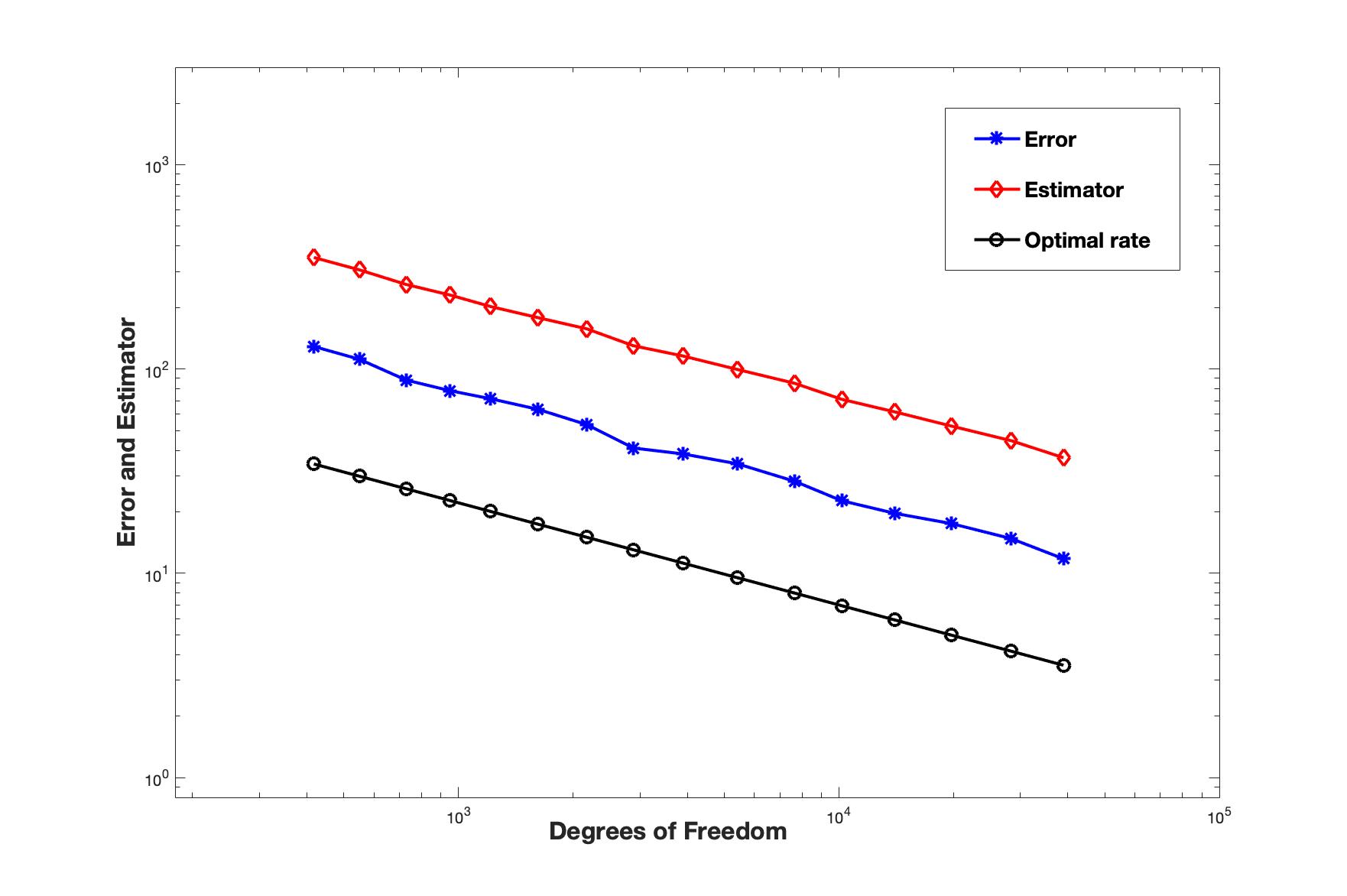}
  \caption{ Error and Estimator}\label{Ex_2:err}
\end{subfigure}%
\begin{subfigure}{.49\textwidth}
 \centering
 \includegraphics[width=8.5cm, height=7cm]{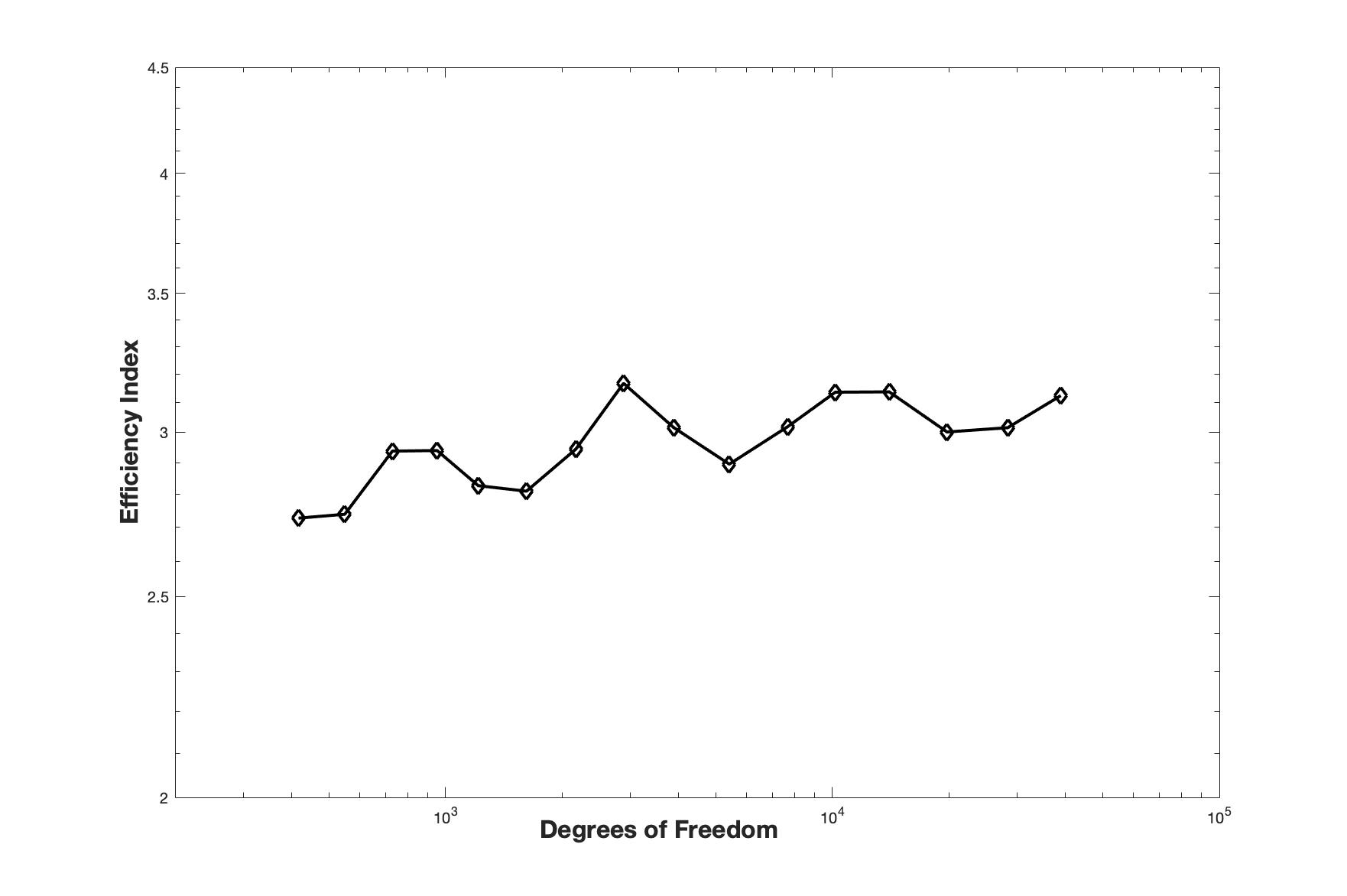}
   \caption{Efficiency Index}\label{Ex_2:effi}
\end{subfigure}
\caption{Error, estimator and efficiency index for Example \ref{Exam:2}}
\end{figure}



\begin{exam} \label{Exam:3}
In this example, we consider the OCP (\ref{num:f}) with integral state and integral control constraints  on the domain $\Omega=(-1,1)\times (-1,1)$ with  the following data \cite{lixin17}:
\begin{align*}
y&=\frac{-1}{2\pi^2}sin(\pi x_1)sin(\pi x_2),\\
p&=sin(\pi x_1)sin(\pi x_2),\\  
y_d&=-(2\pi^2+\frac{1}{2\pi^2})sin(\pi x_1)sin(\pi x_2)-0.6,\\
f&=0,\\
\delta_2&=0,\\ \delta_1 &=0,\\
\beta&=1,\\
u &=-p+max\{\tilde{p}+\beta \tilde{\delta}_1,0\},\\
\end{align*}
where $\tilde{p}=\int_{\Omega}p\,dx/\int_{\Omega}1\,dx$ and $\tilde{\delta}_1= \delta_1/ \int_{\Omega} 1\, dx$.
\end{exam}
\par \noindent
We plot the convergences histories for the error and the error estimator in Figure \ref{Ex_3:err} and the efficiency index in Figure
 \ref{Ex_3:effi}. These figures validates the reliability and efficiency of the error estimator together with the optimal convergence.
%
%



\begin{figure}[h!]
\centering
\begin{subfigure}{.5\textwidth}
  \centering
  \includegraphics[width=8.5cm, height=7cm]{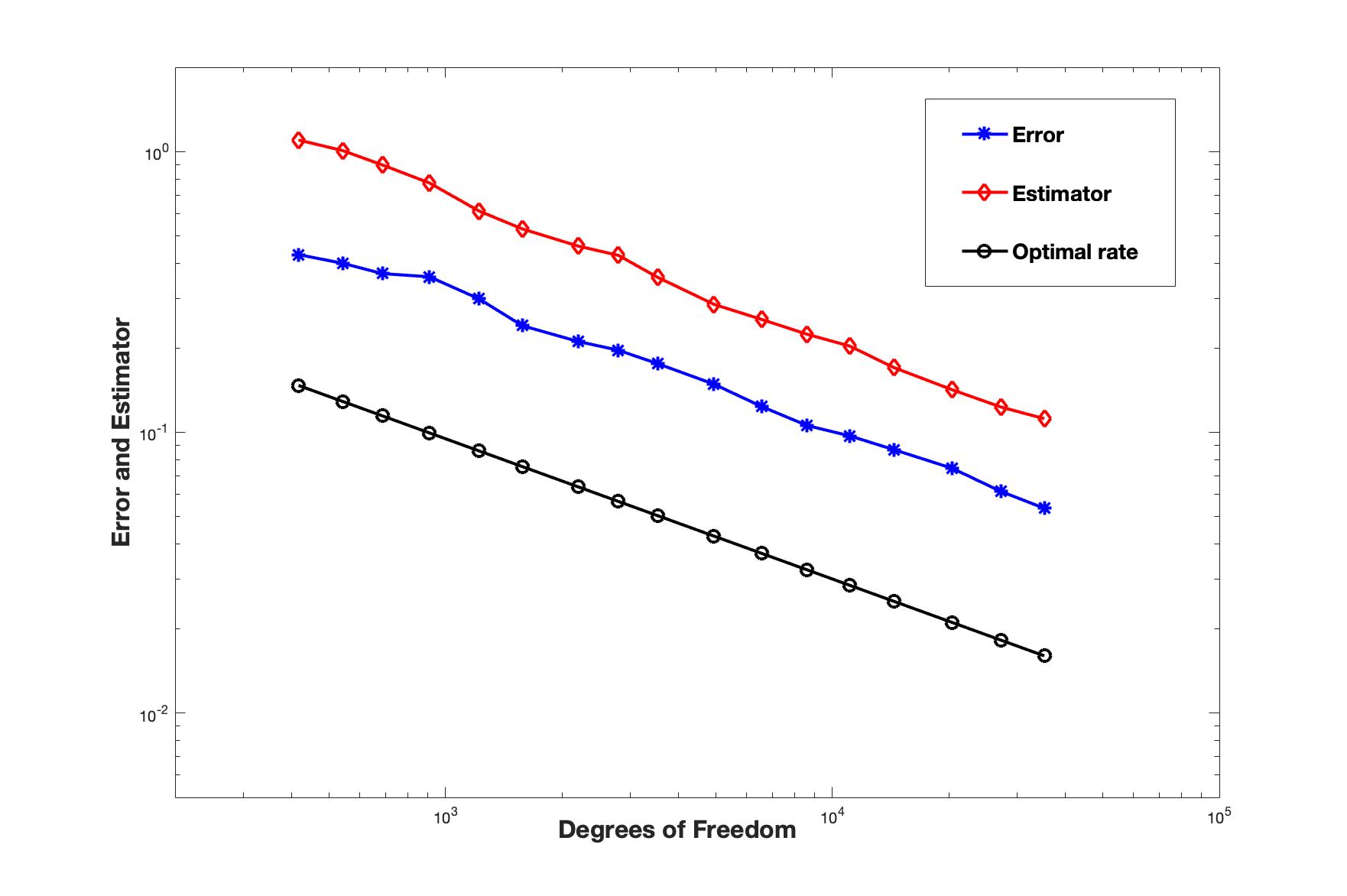}
  \caption{Error and Estimator}\label{Ex_3:err}
\end{subfigure}%
\begin{subfigure}{.5\textwidth}
 \centering
 \includegraphics[width=8.5cm, height=7cm]{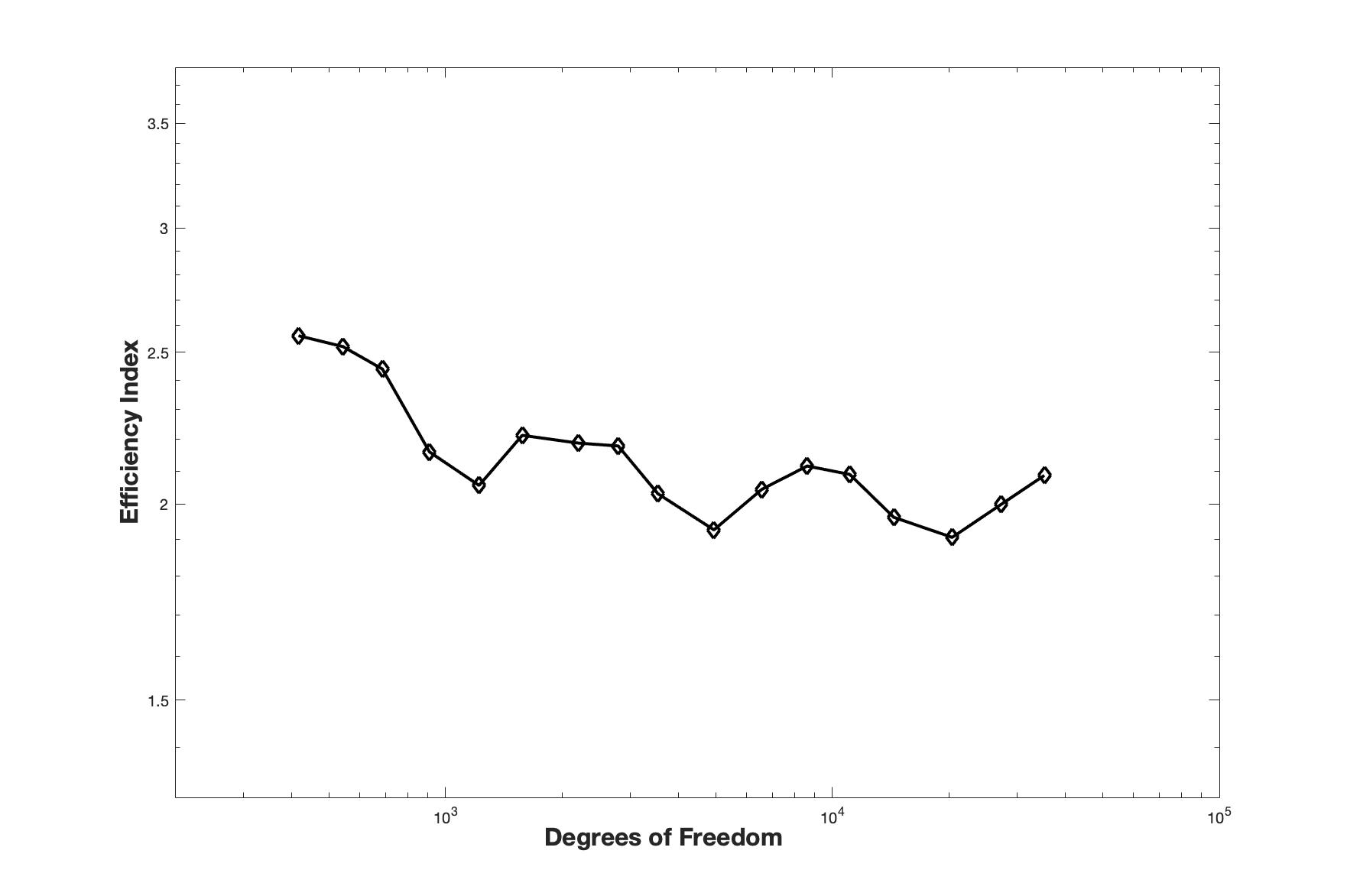}
   \caption{Efficiency Index}\label{Ex_3:effi}
\end{subfigure}
\caption{Error, estimator and efficiency index for Example \ref{Exam:3}}
\end{figure}


\begin{exam}\label{Exam:4}
In this example, we consider the problem (\ref{intro:functional1})-(\ref{intro:state:cons1}) with integral state and pointwise control constraints. The idea of this example is taken from \cite{lapin17}. Therein, the domain $\Omega=(0,1)\times (0,1)$ and the exact solution is not known.
\begin{align*}
y_d&=10(sin(\pi x_1)+sin(\pi x_2)),\\
\beta&=0.01,\\
\delta_3&=0,\\
u_a&=0\;\;\text{and}\;\; u_b=30.\\
\end{align*}
\end{exam}
\par \noindent
The behavior of error estimator is illustrated in Figure \ref{Ex_4:err} confirming the optimal convergence and realibility of the error estimator. The adaptive mesh at a certain refinement level is depicted in Figure \ref{Ex_4:ada}.\\
\begin{figure}[h!]
\centering
\begin{subfigure}{.5\textwidth}
  \centering
  \includegraphics[width=8.5cm, height=7cm]{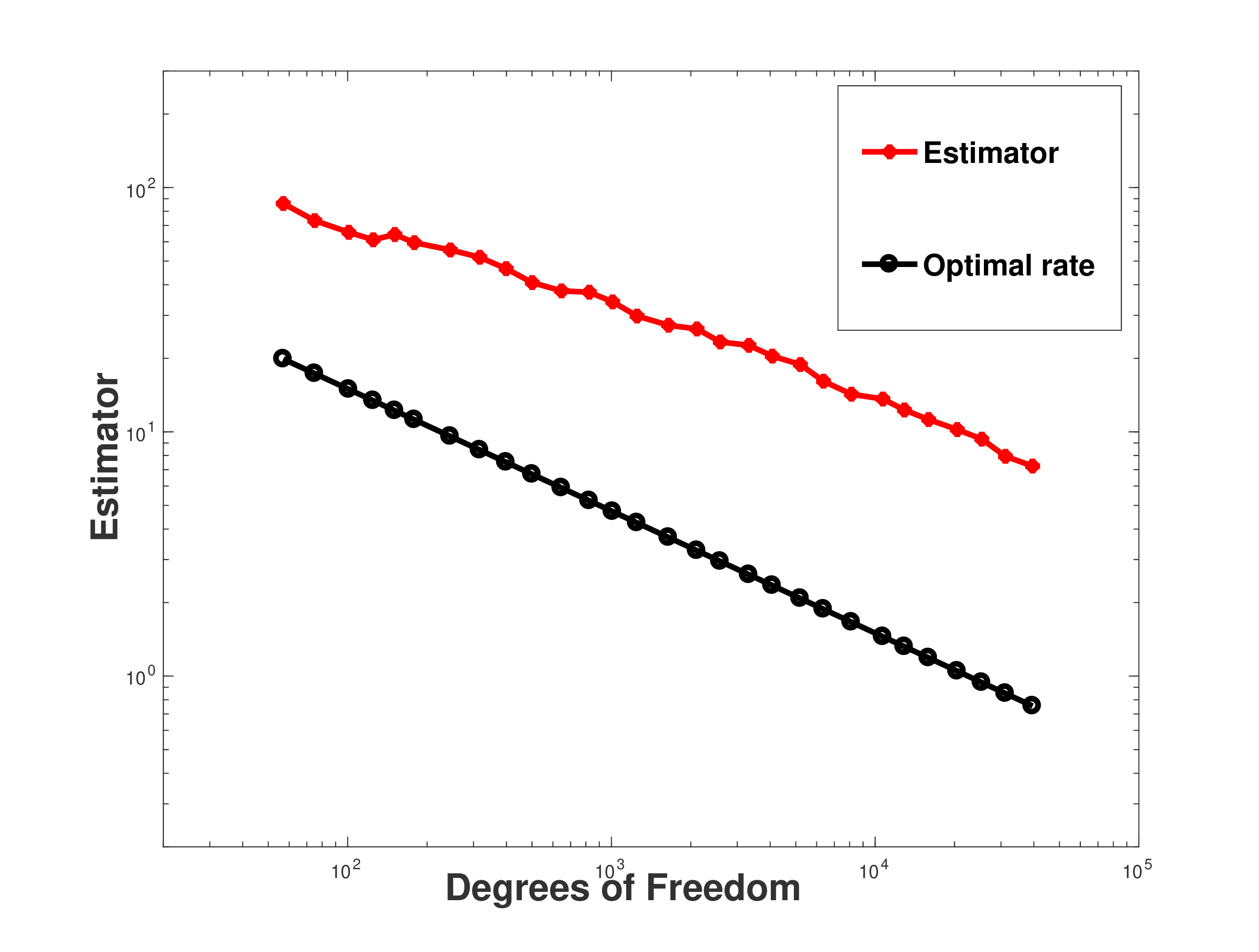}
  \caption{Estimator}\label{Ex_4:err}
\end{subfigure}%
\begin{subfigure}{.5\textwidth}
 \centering
 \includegraphics[width=8.5cm, height=7cm]{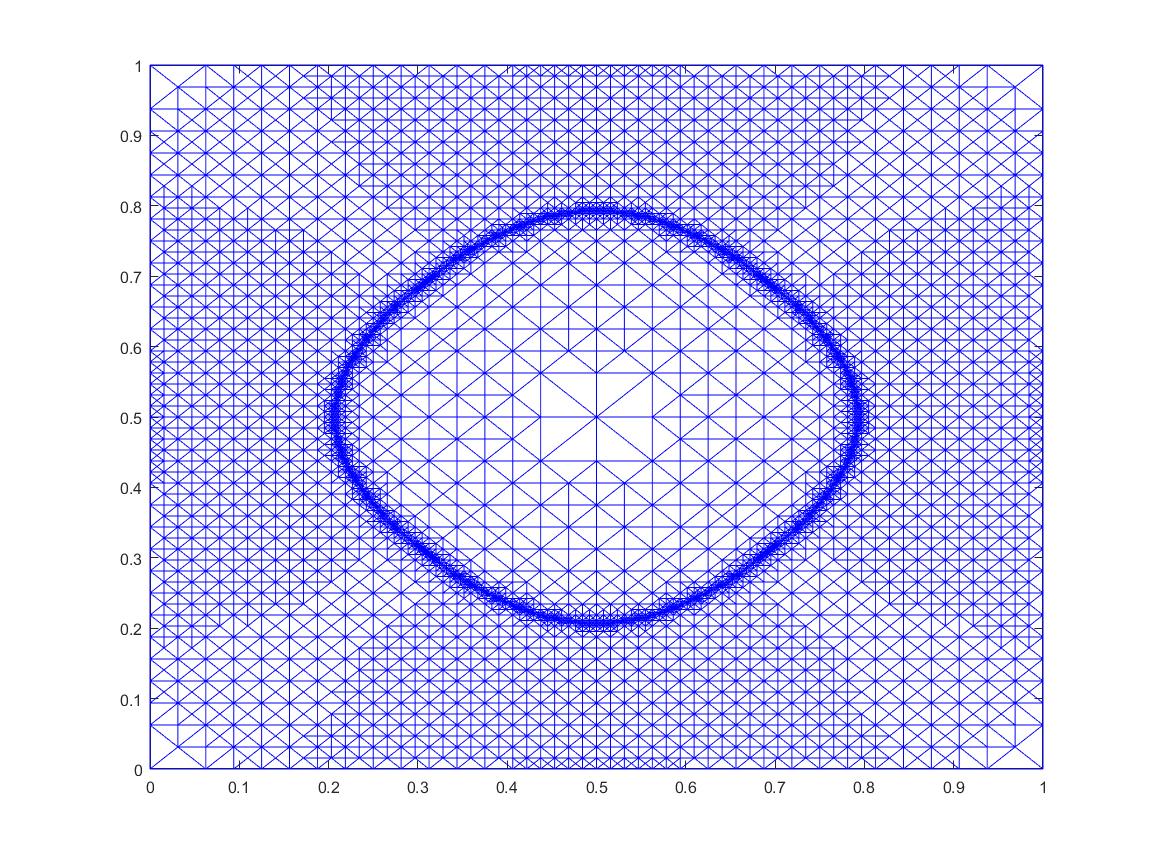}
   \caption{Adaptive mesh}\label{Ex_4:ada}
\end{subfigure}
\caption{Estimator and adaptive mesh for Example \ref{Exam:4} }
\end{figure}

%
%

\end{document}